\numberwithin{equation}{section}
\newcommand{\periodafter}[2]{#1{#2.}}
\newcommand*{\justifyheading}{\raggedright}
\DeclareMathOperator{\R}{Re}
\DeclareMathOperator{\I}{Im}
\DeclareMathOperator{\diam}{diam}
\DeclareMathOperator*{\wsc}{\overset{*}{\rightarrow}}
\theoremstyle{definition}
\newtheorem{definition}{Definition}[section]
\theoremstyle{remark}
\newtheorem{remark}[definition]{Remark}
\theoremstyle{plain}
\newtheorem{theorem}[definition]{Theorem}
\newtheorem{lemma}[definition]{Lemma}
\newtheorem{proposition}[definition]{Proposition}
\newtheorem{corollary}[definition]{Corollary}
\newcommand{\caliA}{\mathcal{A}}
\newcommand{\caliD}{\mathcal{D}}
\newcommand{\caliE}{\mathcal{E}}
\newcommand{\caliU}{\mathcal{U}}
\newcommand{\cj}{\overline}
\newcommand{\D}{\mathbb{D}}
\newcommand{\T}{\mathbb{T}}
\newcommand{\N}{\mathbb{N}}
\newcommand{\C}{\mathbb{C}}
\newcommand{\vfi}{\varphi}
\newcommand{\vep}{\varepsilon}
\newcommand{\diff}{{\rm d}}
\newcommand{\diffA}{{\rm dA}}
\newcommand{\Ordo}{\mathrm{O}}
\numberwithin{equation}{section}
\begin{document}

\title[On Bergman polynomials for domains with corners]
{Asymptotics of Bergman polynomials for domains with reflection-invariant corners}
\author{Erwin Mi\~{n}a-D\'{\i}az \and Aron Wennman}
\dedicatory{Dedicated to Arno B.J. Kuijlaars on the occasion 
of his 60\hspace{1pt}\textsuperscript{th} birthday}
\date{\today}

\keywords{Bergman orthogonal polynomials, planar orthogonality, strong asymptotics.}

\begin{abstract}
We study the asymptotic behavior of the Bergman orthogonal polynomials $(p_n)_{n=0}^{\infty}$ 
for a class of bounded simply connected domains $D$. The class is defined by the
requirement that conformal maps $\vfi$ of $D$ onto the unit 
disk extend analytically across the boundary $L$ of $D$, 
and that $\vfi'$ has a finite number of zeros $z_1,\ldots,z_q$ on $L$. 
The boundary $L$ is then piecewise analytic with corners at the zeros of $\vfi'$. 
A result of Stylianopoulos implies that
a Carleman-type strong asymptotic formula 
for $p_n$ holds on the exterior domain $\C\setminus\overline{D}$.
We prove that the same formula remains
valid across $L\setminus\{z_1,\ldots,z_q\}$ and 
on a maximal open subset of $D$. As a consequence, 
the only boundary points that attract zeros of $p_n$ are the corners.
This is in stark contrast to the case when $\vfi$ fails to admit an analytic
extension past $L$, 
since when this happens the zero counting measure of $p_n$ is known to approach the 
equilibrium measure for $L$ along suitable subsequences.
\end{abstract}

\maketitle

\section{Introduction}
\subsection{Zeros of Bergman polynomials for domains with corners}
\noindent 
For a domain $D$ in the complex plane $\C$, we write $A^2(D)$ for the Bergman space of 
holomorphic functions $f$ on $D$ with finite norm
\[
\|f\|_{L^2(D)}:=\left(\int_D|f(z)|^2\diffA(z)\right)^{1/2}<\infty,
\]
where $\diffA(z)=\pi^{-1}\diff x \diff y$ denotes the standard area measure normalized by $\pi$.
In this work, $D$ will always stand for the interior domain of a Jordan curve $L$. 
That is, $D$ is the bounded component of $\C\setminus L$.  
We will denote by $\D(z,r)$ the open disk of radius $r$ centered at $z$, by
$\T(z,r)$ its boundary, and by $\Delta(z,r)$ the exterior disk 
$\Delta(z,r)=\cj{\C}\setminus\overline{\D(z,r)}$.

The space of polynomials forms a linear subspace of $A^2(D)$. 
By applying the Gram-Schmidt orthonormalization procedure to the standard monomial sequence, 
we obtain a unique sequence of polynomials $(p_n)_{n=0}^\infty$ such that $p_n$ has 
degree $n$ and positive leading coefficient, and the following planar orthonormality 
conditions are satisfied:
\begin{align*}
\int_Dp_n(z)\cj{p_m(z)}\,\diffA(z)=\begin{cases}0, & \ m\not=n,\\
1, & \ n=m.
\end{cases}
\end{align*}
These polynomials are often called the Bergman polynomials for the domain $D$.
Their study dates back at least to Carleman's 1922 paper \cite{Carleman},
and the name Carleman polynomials also occurs in the literature.

For a positive integer $N\ge 3$, we denote by $R_N$ the regular $N$-gon with vertices
at the $N$-th roots of unity. The zeros of the associated Bergman polynomials 
are known to exhibit a curious dichotomy depending on the value of $N$.
In numerical experiments carried out in \cite{ES}, it was observed that for $N\in\{3,4\}$,
the zeros of $p_n$ appear to lie on the set $\Gamma_N$ formed by 
joining the $N$ vertices of $R_N$ to its center, 
and indeed this observation was later confirmed to be a fact in \cite{MaymeskulS}. 
However, for $D=R_N$ with $N\geq 5$, the experiments in \cite{ES} strongly indicate that the zeros of 
$p_n$ move away from $\Gamma_N$ as $n$ increases. 
The reason behind this dichotomy was partially explained in the work \cite{LevinSS}, 
and as we describe in what follows, it is related to the analytic continuation properties 
of the interior conformal maps of $D$ onto the unit disk. 

Denote by $\nu_n$, $n\geq 1$, the normalized counting measure for the zeros set of $p_n$. 
That is, if $\zeta_{1,n},\ldots,\zeta_{n,n}$ denote the zeros of $p_n$ (counted with multiplicity),
we put
\[
\nu_{n}:=\frac{1}{n}\sum_{j=1}^n\delta_{\zeta_{j,n}}.
\]
For a measure $\nu$, we write $\nu_n\wsc \nu$ as $n\to\infty$ to mean that 
\[
\lim_{n\to\infty}\int f\diff\nu_n=\int f \diff\nu
\] 
for every compactly supported continuous function $f$ on $\C$.
We denote by $Z$ the set of points $z_0\in \C$ with the property that for every open neighborhood $U$ 
of $z_0$, one can find infinitely many polynomials in the sequence $(p_n)$ that have a zero in $U$. 
It is easy to see that if $\nu_n\wsc \nu$, then every point in the support of $\nu$ belongs to $Z$.
The following result was proven in \cite{LevinSS} (see Theorem 2.1 and Fact B therein).

\begin{theorem}[Levin, Saff, Stylianopoulos \cite{LevinSS}]\label{LSSThm} 
Let $D$ be the interior domain of a Jordan curve $L$, 
and let $\sigma_L$ be the equilibrium measure of $L$. 
The following statements are equivalent: 
\begin{enumerate}[leftmargin=.9cm]
\item[{\rm (i)}]  No conformal map of $D$ onto $\D(0,1)$ can 
be analytically continued to a domain containing $\cj{D}$.  
\item[{\rm (ii)}] There exists a subsequence $(\nu_{n_k})$ with 
\begin{align}\label{wc}
\nu_{n_k}\wsc \sigma_L\quad  \mathrm{as}\quad k\to\infty.
\end{align}	
\item[{\rm (iii)}] For every $z\in D$,
\[
\limsup_{n\to\infty}|p_n(z)|^{1/n}=1.
\]
\end{enumerate}	
\end{theorem}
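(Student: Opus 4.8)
\emph{Plan of proof.} I would establish the cycle $(\mathrm i)\Rightarrow(\mathrm{iii})\Rightarrow(\mathrm{ii})\Rightarrow(\mathrm i)$. Three facts, valid for any Jordan domain, will be used freely. First, if $\lambda_n>0$ is the leading coefficient of $p_n$, then $\lambda_n^{-1}$ is the minimal $L^2(D)$-norm among monic polynomials of degree $n$, so $\lambda_n^{1/n}\to 1/\mathrm{cap}(L)$. Second, since polynomials are dense in $A^2(D)$, the truncated reproducing kernels $\sum_{k\le n}p_k(z)\overline{p_k(w)}$ converge locally uniformly on $D\times D$ to the Bergman kernel $K_D(z,w)$; in particular $\sum_n|p_n(z)|^2=K_D(z,z)<\infty$ for $z\in D$, so $\limsup_n|p_n(z)|^{1/n}\le1$ there, and (iii) is exactly the statement that this inequality is never strict. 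Third, $\lim_n|p_n(z)|^{1/n}=e^{g_\Omega(z,\infty)}$ locally uniformly on $\Omega:=\overline{\C}\setminus\overline D$, with $|p_n(z)|=\Ordo(\sqrt n\,e^{n g_\Omega(z,\infty)})$ on closed subsets of $\Omega$; here $g_\Omega(\cdot,\infty)$ is the Green function with pole at infinity, and throughout $\vfi\colon D\to\D(0,1)$ denotes a Riemann map, so $K_D(z,w)=\pi^{-1}\vfi'(z)\overline{\vfi'(w)}(1-\vfi(z)\overline{\vfi(w)})^{-2}$.

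\emph{$(\mathrm i)\Rightarrow(\mathrm{iii})$, contrapositively.} Suppose $|p_n(z_0)|\le\rho^n$ for all large $n$, with $z_0\in D$ and $\rho<1$. I would continue $z\mapsto K_D(z,z_0)=\sum_n\overline{p_n(z_0)}\,p_n(z)$ analytically across $L$: on $D$ this series is $K_D(\cdot,z_0)$, and by the exterior estimate it converges locally uniformly also on $\{z\in\overline\Omega:\ g_\Omega(z,\infty)<\log(1/\rho)\}$, so matching the two pieces should yield a holomorphic extension of $K_D(\cdot,z_0)$ to a neighbourhood of $\overline D$. Granting this, the kernel formula shows that on $D$ the map $\vfi$ solves the first-order ODE $\vfi'=c^{-1}K_D(\cdot,z_0)(1-a\vfi)^2$ with $a=\overline{\vfi(z_0)}\in\D(0,1)$ and $c=\overline{\vfi'(z_0)}/\pi\ne 0$; since the coefficient $K_D(\cdot,z_0)$ is now holomorphic past $L$, the right-hand side is holomorphic in $z$ near $\overline D$ and polynomial in $\vfi$, so the bounded solution $\vfi$ extends holomorphically to a neighbourhood of $\overline D$, contradicting (i). \emph{I expect the gluing of the interior and exterior series across $L$ to be the main obstacle: it requires the exterior bound to be controlled up to and along $L$ (polynomial growth of $|p_n|$ on $L$ would suffice), which for an irregular boundary calls for a separate argument; the rest of this step is then formal.}

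\emph{$(\mathrm{iii})\Rightarrow(\mathrm{ii})$.} This is softer but not entirely free. Pass to a subsequence with $\nu_{n_k}\to\mu$ weak-$*$ (the $\nu_n$ live in the fixed convex hull of $\overline D$) and chosen, using (iii), so that $\frac1{n_k}\log|p_{n_k}(z)|\to0$ for all $z$ in a fixed countable dense $E\subset D$; writing $\frac1n\log|p_n|=\frac1n\log\lambda_n-U^{\nu_n}$ with $U^\nu$ the logarithmic potential, one deduces $U^{\nu_{n_k}}(z)\to-\log\mathrm{cap}(L)$ on $E$, hence $U^\mu\le-\log\mathrm{cap}(L)$ on $E$ (principle of descent) and so on $D$ (lower semicontinuity), while the always-valid bound $\limsup_n|p_n|^{1/n}\le1$ on $D$ gives $\liminf_k U^{\nu_{n_k}}\ge-\log\mathrm{cap}(L)$, and the lower envelope theorem plus the super-mean value property upgrade this to $U^\mu\ge-\log\mathrm{cap}(L)$ on $D$. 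Thus $U^\mu\equiv-\log\mathrm{cap}(L)$ on $D$; on $\Omega$ the same identity together with the exterior asymptotics and Frostman's relation gives $U^\mu=U^{\sigma_L}$; since $U^{\sigma_L}\equiv-\log\mathrm{cap}(L)$ on $\overline D$ too, $U^\mu\equiv U^{\sigma_L}$ on $\C$ and hence $\mu=\sigma_L$, which is (ii). (The delicate input is the simultaneous choice of the subsequence along a dense set, which is where the full strength of (iii) enters.)

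\emph{$(\mathrm{ii})\Rightarrow(\mathrm i)$, contrapositively.} If some Riemann map $\vfi$ continues analytically past $L$, then $L$ is analytic away from the finitely many zeros $z_1,\dots,z_q$ of $\vfi'$ on $L$, and near each point of $L\setminus\{z_1,\dots,z_q\}$ the exterior map $\Phi$ continues as well, so a Carleman-type formula $p_n(z)=(\sqrt{(n+1)/\pi}+\ordo(1))\Phi'(z)\Phi(z)^n$ holds in a two-sided neighbourhood there. Hence $p_n$ is zero-free in a fixed neighbourhood of $L\setminus\{z_1,\dots,z_q\}$ for all large $n$, so every weak-$*$ limit $\mu$ of $(\nu_{n_k})$ is a probability measure with $\mathrm{supp}\mu\cap L\subseteq\{z_1,\dots,z_q\}$; since $\sigma_L$ is a non-atomic probability measure carried by $L$, no such $\mu$ can equal $\sigma_L$, so (ii) fails and the cycle closes.
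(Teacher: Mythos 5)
The paper does not prove this theorem; it is quoted from Levin--Saff--Stylianopoulos, with the authors pointing the reader to ``Theorem 2.1 and Fact B therein.'' So there is no in-paper proof to compare against, and your proposal has to be judged on its own merits.

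Your overall architecture (Walsh-type overconvergence for the Bergman kernel to link (i) and (iii); a potential-theoretic balayage argument to link (iii) and (ii)) is the right one and is in the spirit of the LSS proof. However, two of your three implications have real gaps.

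In $(\mathrm i)\Rightarrow(\mathrm{iii})$, the ``gluing'' you flag is not a technicality you can dispatch with a local argument: the exterior bound $|p_n(z)|=\Ordo(\sqrt{n}\,e^{ng_\Omega(z,\infty)})$ is only available on \emph{closed subsets of} $\Omega$, so neither series converges uniformly up to $L$, and for a general Jordan curve you have no control on $\max_L|p_n|$. The gap is precisely filled by Walsh's overconvergence theorem (cited in this very paper at \cite[pp.\ 130--131]{Walsh} in the proof of Theorem~\ref{theorem:seriesexpansion}, see equation~\eqref{limisupidentity}), which asserts that for $\zeta\in D$ fixed, $\limsup_n|p_n(\zeta)|^{1/n}=\tau(\zeta)$, where $1/\tau(\zeta)$ is the ``radius of overconvergence,'' i.e.\ the largest $r$ such that $K(\cdot,\zeta)$ continues analytically to $\caliD_r$. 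Once you invoke Walsh, your subsequent reduction to the extendability of $K(\cdot,z_0)$ and the ODE trick to promote this to an extension of $\vfi$ are fine, and (choosing $z_0=\vfi^{-1}(0)$, so $K(\cdot,z_0)=\cj{\vfi'(z_0)}\,\vfi'$) the argument becomes especially transparent. As written, though, this step is incomplete.

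In $(\mathrm{ii})\Rightarrow(\mathrm i)$, your argument is anachronistic and effectively circular: you invoke a two-sided Carleman-type asymptotic formula across $L\setminus C(L)$, which (for piecewise-analytic $L$ with corners) is exactly Theorem~\ref{thm:exteriorPhi} of the present paper, a 2024 result. This cannot be used to re-prove a 2003 theorem. The clean, non-circular route is to repeat the potential-theoretic engine: if $\vfi$ extends past $L$, Walsh gives $\limsup_n|p_n(z)|^{1/n}\le\tau<1$ on an open subset $V\subset D$, hence $\liminf_n U^{\nu_n}(z)\ge -\log\mathrm{cap}(L)+|\log\tau|$ there; by the lower envelope theorem, any weak-$*$ limit $\mu$ of $\nu_{n_k}$ satisfies $U^\mu>-\log\mathrm{cap}(L)=U^{\sigma_L}$ q.e.\ on $V$, so $\mu\ne\sigma_L$ and (ii) fails. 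This closes the cycle without importing heavy machinery.

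On $(\mathrm{iii})\Rightarrow(\mathrm{ii})$: the mechanism is right, but the ``simultaneous choice at a countable dense $E$'' is not needed (and not immediately justified from (iii), which only gives a $z$-dependent subsequence). It suffices to pick a single $z_0$, pass to a subsequence with $\tfrac1{n_k}\log|p_{n_k}(z_0)|\to 0$, refine to get $\nu_{n_k}\wsc\mu$, and then combine the principle of descent at $z_0$, the universal bound $U^{\nu_n}\ge-\log\mathrm{cap}(L)-o(1)$ on $D$, the lower envelope theorem, and the minimum principle for the superharmonic $U^\mu$ to force $U^\mu\equiv-\log\mathrm{cap}(L)$ on $D$, hence $\mu=\sigma_L$ by uniqueness. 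That tightens this step considerably.
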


Note that {\rm (i)} is equivalent to the existence 
of \emph{one} conformal map $\varphi$ which cannot
be analytically continued past $L$.

We remark that there are classes of domains for which the convergence in \eqref{wc} is 
known to hold for the full sequence $(\nu_n)$, see \cite[Cor. 3.1]{SaffStylianopoulos}.

Since the support of $\sigma_L$ is all of $L$, the convergence \eqref{wc} implies that if 
$\varphi$ has a singularity on $L$, then $L\subset Z$. When $D=R_N$ with $N\geq 5$, the 
conformal map $\varphi$ has singularities at the vertices of $D$, so that every point 
of $L$ attracts zeros of the polynomials $p_n$. As a consequence, it is not possible 
for the zeros to stay on the set $\Gamma_N$, as it otherwise happens when $D$ is the 
equilateral triangle ($D=R_3$) or the square ($D=R_4$). We note that when $N\in\{3,4\}$, 
not only are the zeros of every $p_n$ located on the set $\Gamma_N$, but we also
have that $\Gamma_N\subset Z$. This  is a consequence of Theorem 9 of \cite{MaymeskulS}.    

\begin{definition}
We let $\caliA_1$ denote the collection of bounded Jordan domains $D$ 
with the property that a conformal map $\vfi$ of $D$ onto $\D(0,1)$ has an analytic 
continuation to an open set containing $\cj{D}$, and the derivative $\vfi'$ has 
at least one zero on the boundary $L$ of $D$. 
\end{definition}

\begin{figure}
	\begin{center}
\scalebox{1}{\begin{tikzpicture}
			\draw (-3.85, 0) node[inner sep=0] {\includegraphics[scale=.8]{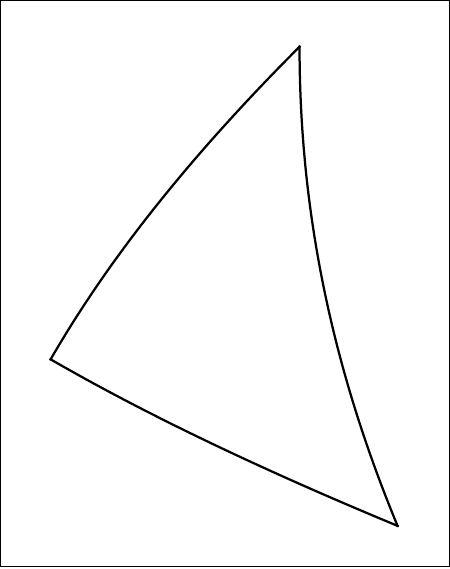}};
		\draw (3.85, 0) node[inner sep=0] {\includegraphics[scale=.8]{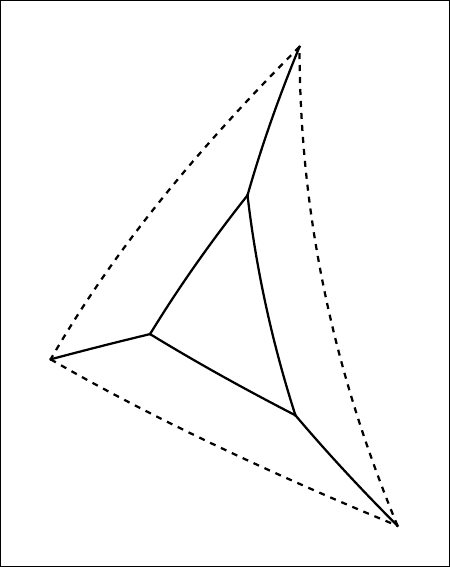}};
		\draw (-2.7, 3.45) node {$z_1$};
	\draw (-6.53, -1.07) node {$z_2$};
	\draw (-1.23, -3.4) node {$z_3$};
	\draw (-6.2,3.2) node {$\Omega$};
	\draw (-1.5,1) node {$L$};
	\draw (-4,-.3) node {$D$};
	 \draw[-Stealth]        (-1.7,1)   -- (-2.68,1);
	 
	 	\draw (-2.7+7.7, 3.45) node {$z_1$};
	 \draw (-6.53+7.7, -1.07) node {$z_2$};
	 \draw (-1.23+7.7, -3.4) node {$z_3$};
	 \draw (-6.2+7.7,3.2) node {$\Omega^*$};
	 \draw (-1.5+7.7,1.5) node {$\partial \Omega^*$};
	 	 \draw[-Stealth]        (5.75,1.5)   -- (4.26,1.5);
	\end{tikzpicture}}
\end{center}
	\caption{Illustration of the curve $L=\partial D$ and sets $\Omega$ and $\Omega^*$, for a domain $D\in \caliA_1$ with three corners $z_1$, $z_2$, and $z_3$.}
	\label{curveL}
\end{figure}

A domain $D\in\caliA_1$ is said to have \emph{reflection-invariant corners}.
For an explanation of this terminology, see the last paragraph of this section.

If $D\in\caliA_1$, then $L$ is a piecewise analytic curve with corners at each of the zeros 
of $\varphi'$ on $L$. We denote this set of corners by $C(L)=\{z_1,\ldots,z_q\}$ 
(see Figure \ref{curveL}). The interior 
angle formed by the two arcs of $L$ that meet at a given corner $z_j$ is equal to 
$\pi/m_j$, where $m_j\geq 2$ is the order of $\vfi$ at $z_j$. 
The equilateral triangle and the square both belong to the class $\caliA_1$.
We might also mention the example of 
the lens-shaped domain $D$ bounded by two circular arcs that are symmetric about the 
imaginary axis and that meet at $-i$ and $i$, forming two corners with interior angles $\pi/2$. 
This domain is also in $\caliA_1$, and it was proven in \cite[Cor. 3.2]{LevinSS} that the 
zeros of each $p_n$ lie on the segment $(-i,i)$, and that $[-i,i]\subset Z$.

These examples suggest the hypothesis that for every $D\in \caliA_1$, the set $L\cap Z$ 
contains every corner of $L$ but no other boundary point (i.e. $L\cap Z=C(L)$). 
That this is indeed the case will follow from the main result of this paper, namely,
that the strong asymptotic formula known to describe the asymptotic behavior of $p_n$ as 
$n\to \infty$ on the exterior of $L$ in fact extends across the analytic arcs of 
$L$ to a portion of the domain $D$. We state this result in more precise terms in what follows.

\subsection{Strong asymptotics on a maximal open set}
\noindent Denote by $\Omega$ the unbounded component of $\cj{\C}\setminus L$, 
so that $\Omega=\cj{\C}\setminus \cj{D}$ (see Figure \ref{curveL}, left side). 
We let $\phi:\Omega\to\Delta(0,1)$ be the conformal map of 
$\Omega$ onto the exterior disk $\Delta(0,1)$, such that $\phi(\infty)=\infty$ and
$\gamma:=\phi'(\infty)>0$. 
By Carath\'{e}odory's theorem, the map $\phi$ extends to a 
homeomorphism of $\cj{\Omega}$ onto $\cj{\Delta(0,1)}$.

In \cite{Nikos1}, Stylianopoulos proved that if the orthogonality domain $D$ 
is bounded by a piecewise analytic curve without cusps, then $p_n$ satisfies
a Carleman-type strong asymptotic formula on the exterior domain $\Omega$.
The error term was later improved in \cite{BeckermannStylianopoulos}, and the
(sharp) asymptotic formula reads
\begin{align}\label{exteriorasymptotics}
p_n(z)=\sqrt{n+1}\phi'(z)\phi(z)^n\big(1+\Ordo(n^{-1})\big),\qquad z\in \Omega,
\end{align}
as $n\to\infty$, with the implicit constant being uniformly bounded on closed subsets of $\Omega$.
In particular, \eqref{exteriorasymptotics} applies whenever $D\in \caliA_1$. 
Note that if the piecewise analytic curve $L$ is such that an 
interior conformal map $\varphi: D\to\D(0,1)$ has a singularity on $L$ 
(for instance, if the interior angle $\alpha$ formed at a corner is such that $\pi/\alpha\not\in \N$), 
then the asymptotic formula \eqref{exteriorasymptotics} cannot possibly hold on an open set 
larger than $\Omega$. Indeed, by Theorem \ref{LSSThm} every point of $L$ attracts zeros of the 
polynomials $p_n$, and furthermore, the $\limsup_{n\to\infty}\sqrt[n]{|p_n(z)|}$ 
remains constant (equal to $1$) throughout $D$.
However, the opposite turns out to be true when $D\in \caliA_1$.
This is a direct consequence of the following theorem, which is the main
result of this paper.

\begin{theorem}\label{thm:exteriorPhi}
Assume $D\in \caliA_1$. There exists an open set 
$\Omega^*\supset \cj{\Omega}\setminus C(L)$ 
and a univalent function $\Phi:\Omega^*\to \cj{\C}$ with $\Phi|_\Omega=\phi$,
such that the asymptotic formula
\begin{align}\label{exteriorasymptoticsPhi}
p_n(z)=\sqrt{n+1}\Phi'(z)\Phi(z)^n\left(1+\Ordo\left(\frac{\log n}{n}\right)\right),\qquad z\in \Omega^*,
\end{align}
holds uniformly on closed subsets of $\Omega^*$ as $n\to\infty$. 
\end{theorem}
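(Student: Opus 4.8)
The plan is to proceed in three stages: construct the extension $\Phi$ and the domain $\Omega^{*}$ by Schwarz reflection; prove that $p_n$ decays exponentially on $\Omega^{*}\cap D$ at precisely the rate $|\Phi|^{n}$; and then sharpen this to the stated formula with error $\Ordo((\log n)/n)$.

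\emph{Stage 1: geometry.} Each of the $q$ open analytic arcs of $L$ lying between consecutive corners is a sub-arc of an analytic Jordan curve, so by the Schwarz reflection principle $\phi$ extends univalently across it to a two-sided neighbourhood, mapping the $D$-side into $\D(0,1)$. I would take $\Omega^{*}$ to be the union of $\Omega$, these open arcs, and suitably thin $D$-side collars, truncated away from the corners and shrunk so that the resulting single-valued extension $\Phi$, with $\Phi|_{\Omega}=\phi$, is univalent on $\Omega^{*}$ and $\Phi,\Phi'$ are nowhere zero there; a compactness argument produces such a choice with $\Omega^{*}$ connected and $\Omega^{*}\supset\cj\Omega\setminus C(L)$. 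The corners are genuine obstructions: at $z_j$ the exterior opening is $2\pi-\pi/m_j$, hence $\phi(z)-\phi(z_j)\asymp(z-z_j)^{\mu_j}$ with $\mu_j:=m_j/(2m_j-1)\in(1/2,1)$ non-integer, a branch-type singularity. Since $|\Phi|<1$ on the $D$-side collars, the right-hand side of \eqref{exteriorasymptoticsPhi} is exponentially small on $\Omega^{*}\cap D$, and the assertion there is that $p_n$ decays at that exact rate; this is the substance of the theorem, Theorem~\ref{LSSThm} alone giving only $\limsup|p_n|^{1/n}<1$ at some points of $D$.

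\emph{Stage 2: exponential decay in $D$.} Here the hypothesis $D\in\caliA_1$ enters decisively. Because the interior conformal map $\varphi$ continues analytically past $L$, the Bergman kernel has the explicit form $K_D(\zeta,w)=\varphi'(\zeta)\,\cj{\varphi'(w)}\,(1-\varphi(\zeta)\,\cj{\varphi(w)})^{-2}$, and for fixed $w\in\Omega^{*}\cap D$ the function $\zeta\mapsto K_D(\zeta,w)$ is analytic on the set $\{\,|\Phi|<1/|\Phi(w)|\,\}$, which contains $\cj D$, apart from a double pole at the reflection $w^{*}$ of $w$ across $L$ (a point of $\Omega$, with $|\Phi(w^{*})|=1/|\Phi(w)|>1$). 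By the Bernstein--Walsh lemma there is a polynomial $Q$ of degree $<n$ with $\|K_D(\cdot,w)-Q\|_{L^{2}(D)}\lesssim n\,|\Phi(w)|^{n}$, and since $p_n\perp Q$, the reproducing property yields $|p_n(w)|=|\langle p_n,\,K_D(\cdot,w)-Q\rangle|\le\|K_D(\cdot,w)-Q\|_{L^{2}(D)}\lesssim n\,|\Phi(w)|^{n}$, uniformly on closed subsets of $\Omega^{*}\cap D$. This already establishes the claimed exponential decay at the exact rate $|\Phi|$, and will serve as the a priori input in the next stage.

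\emph{Stage 3: the precise formula.} To pin down the constant and the error term, I would use an integral representation of $p_n$ adapted to $\varphi$: Green's theorem turns the planar orthogonality into the identity $p_n(z)=\frac{\varphi'(z)}{\varphi(z)}\cdot\frac{1}{2\pi i}\oint_{L}\frac{p_n(\zeta)\,\varphi(\zeta)}{\varphi(\zeta)-\varphi(z)}\,d\zeta$ for $z\in D$. Substituting the exterior formula \eqref{exteriorasymptotics} (extended to $L\setminus C(L)$) for $p_n(\zeta)$ on the contour, integrating by parts, and pushing the contour inward through the region $\{|\Phi|<1\}$ past the point $\varphi(z)$, one expects the crossed residue to produce exactly $\sqrt{n+1}\,\Phi'(z)\Phi(z)^{n}$, with the displaced contour integral exponentially small away from the corners. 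Near each corner $z_j$ the contour remains pinned within distance $\delta_n$ of $L$, where $|\Phi|\le 1+C\delta_n^{\mu_j}$ and the conformal maps and the Schwarz function of $L$ carry the Hölder exponent $\mu_j$; estimating these pinned contributions (using the Stage~2 bound for the values of $p_n$ there) and choosing $\delta_n$ appropriately forces a logarithm, giving the relative error $\Ordo((\log n)/n)$ uniformly on closed subsets of $\Omega^{*}$ --- and accounts for the loss from the exterior rate $\Ordo(1/n)$, since \eqref{exteriorasymptotics} is only uniform on closed subsets of $\Omega$ while here the contour must approach $L$ near the corners. I expect the principal obstacle to lie in this last stage: in the decomposition of $p_n$ into a dominant term and a remainder the remainder still involves the boundary values of $p_n$, so the estimate must be made self-consistent --- folding in the Stage~2 a priori bound, or solving a fixed-point relation for the collar values of $p_n$ --- and, within it, the several Hölder-singular quantities must be controlled simultaneously in shrinking neighbourhoods of the corners.
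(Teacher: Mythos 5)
Your proposal is a genuine proof sketch in a different style from the paper, and Stages~1--2 are sound, but Stage~3 has a circularity that you yourself flag without resolving, and this is where the real work lies.

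\emph{What is correct and different.} Stage~1 (Schwarz reflection across the analytic arcs and the corner exponent $\mu_j=m_j/(2m_j-1)$) matches the paper's geometric setup, except that the paper's $\Omega^*=D_1\cup\cj\Omega\setminus C(L)$ is a \emph{maximal} set characterized by the zero structure of $h(w)-\varphi(z)$, not just a thin collar; since the stated theorem only asks for some open $\Omega^*\supset\cj\Omega\setminus C(L)$, your thin collar suffices for the statement but misses the maximality discussed afterwards. Stage~2 is a nice, clean route not used by the paper: the double pole of $K_D(\cdot,w)$ at the reflected point $w^*$ with $|\Phi(w^*)|=1/|\Phi(w)|$, plus Bernstein--Walsh approximation and orthogonality, gives $|p_n(w)|\lesssim n\,|\Phi(w)|^n$ directly. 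This would also prove the $\limsup$ identity \eqref{limsup-equality} on the collar more quickly than the paper does. But it only delivers the exponential rate, not the constant $\sqrt{n+1}\,\Phi'(z)$ or the $\Ordo((\log n)/n)$ error.

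\emph{Where it breaks down.} In Stage~3 you substitute ``the exterior formula \eqref{exteriorasymptotics} (extended to $L\setminus C(L)$)'' for $p_n(\zeta)$ on the contour. That extension is precisely the conclusion of the theorem (more exactly, Lemma~\ref{maintheorem:strongasymptotic}), so the argument is circular as written. The honest version is to integrate over a level curve $L_r$ with $r>1$ fixed, where \eqref{second-ineq-A_n} gives $A_n=\Ordo(1/(n(r-1)))$, and then try to push the contour across $L$ into the $D$-side collar to pick up the residue $\sqrt{n+1}\,\Phi'(z)\Phi(z)^n$. Two problems then arise at once: (a) the $A_n$-error term is an integral of $\phi'\phi^n A_n$ over $L_r$, and $A_n$ cannot be deformed into $D$ because $A_n$ is only defined (and only estimated) on $\Omega$; and (b) near each corner the contour must stay pinned near $L$ where $|\Phi|\approx1$, producing a contribution of size $\sim\sqrt n\,\delta_n$ that must be shown $\lesssim\sqrt n\,\Phi(z)^n\log n/n$ \emph{uniformly down to $L$}, which is exactly the delicate regime. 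You gesture at a ``self-consistent'' or ``fixed-point'' argument folding in Stage~2, but that a priori bound $n|\Phi|^n$ is off by $n^{1/2}$ from the truth and does not obviously close a bootstrap with the right constant or rate. The paper avoids both difficulties by a different decomposition: the Bergman-kernel expansion gives the exact series $\alpha_{n,n}p_n(z)=Q_n(z)-\sum_{k>n}\alpha_{n,k}p_k(z)$, which is iterated into $p_n(z)=\alpha_{n,n}^{-1}\sum_j h(n,j)Q_{n+j}(z)$ with $Q_n$ an \emph{explicit} contour integral not involving $p_n$; this removes the circularity entirely. The coefficients $h(n,j)$ are controlled via $\varepsilon_{n,n}=\Ordo(1/n)$, the $Q_n$'s are evaluated by residues, and the resulting interior bound \eqref{growth-at-boundary1} together with the exterior bound \eqref{second-ineq-A_n} --- both of which degenerate only like $1/(n\,\mathrm{dist})$ as $z\to L$ --- are glued by a quantitative Phragm\'en--Lindel\"of argument in a strip of width $\sim1/\log n$ around $L\setminus C(L)$. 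That last device is what produces the $\log n$ factor, not corner contour estimates. To make your Stage~3 rigorous you would need to carry out the fixed-point argument and the near-corner pinched-contour estimates in detail, and it is far from clear that this yields an error as small as $\Ordo((\log n)/n)$.
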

 
The set $\Omega^*$ is illustrated in Figure \ref{curveL} (right side), and will be precisely defined in Section \ref{s:gluing-overview}. This set is maximal in the sense 
that no formula such as \eqref{exteriorasymptoticsPhi} can 
hold on an open set larger than $\Omega^*$, since 
$\limsup_{n\to\infty}\sqrt[n]{|p_n(z)|}$ 
will be shown to remain constant on the interior of $\C\setminus\Omega^*$.    

In the examples where $D=R_N$ with $N\in\{3,4\}$, we have $\Omega^*=\cj{\C}\setminus \Gamma_N$ and $\Phi$ 
is the analytic continuation to $\Omega^*$ of the exterior conformal map $\phi$. However, 
we do not know whether $\Omega^*$ is always a connected set, and so we can only say that $\Phi$ 
coincides with the analytic continuation of $\phi$  on  the unbounded component of $\Omega^*$. 

Prior to \cite{Nikos1}, asymptotic formulas like \eqref{exteriorasymptotics} (but with different
estimates of the error terms) 
had been established  for  orthogonality domains $D$ that are bounded either by  an analytic Jordan 
curve \cite{Carleman},  or by a curve with some degree of smoothness \cite{Suetin}. 
For the class $\caliA_2$ of domains with analytic boundary, Carleman found already in 1922 
\cite{Carleman} that \eqref{exteriorasymptotics} indeed holds on the largest domain 
$\Omega_\rho \supset \cj{\Omega}$ that admits a univalent continuation of $\phi$ with 
$\phi(\Omega_\rho)=\Delta(0,\rho)$, $0\leq \rho<1$. More recently in \cite{DragnevMinaDiaz1} (see also \cite{DragnevMinaDiaz2,DragnevMinaNorthington,MinaDiaz2}), 
it was found that when $L$ is analytic, it is possible to characterize the largest open set 
$\Omega^*\supset \Omega_\rho$ on which a formula of the form 
\eqref{exteriorasymptoticsPhi} is valid.

Notice that a domain $D$ is in the union $\caliA=\caliA_1\cup\caliA_2$ precisely when a 
conformal map of $D$ onto $\D(0,1)$ is analytic in $\cj{D}$ 
(i.e., if the first statement of Theorem \ref{LSSThm} does not hold). 
For $D\in \caliA$, it turns out that the dominant behavior of $p_n(z)$ on $D$ 
as $n\to\infty$ is given by the integral formula
\begin{align}\label{integralformula}
p_n(z)\sim{} &	\frac{\sqrt{n+1}\varphi'(z)}{2\pi i}
\int_{|w|=1}\frac{w^n}{h(w)-\varphi(z)}\,\diff w,\qquad z\in D,
\end{align}  
where $h$ is the homeomorphism $h:\T(0,1)\to\T(0,1)$ given by
\begin{align*}
h(w):=\varphi(\psi(w))
\end{align*}	
and $\psi$ is the inverse of the exterior map $\phi$.
Since the right-hand side of \eqref{integralformula} makes sense for every Jordan domain, it may well be that 
this representation of $p_n$ is valid also for other domains outside the class $\caliA$. 
However, there is one feature which accounts for the distinguished behavior of the Bergman polynomials
for domains in the class $\caliA$. 
Namely, the fact that $D\in \caliA$ if and only if the function $h(w)$ appearing in the denominator of 
\eqref{integralformula} is analytic in a neighborhood of the unit circle $\T(0,1)$. 
This is the key feature that makes possible to extend the asymptotic formula 
\eqref{exteriorasymptotics} beyond $\Omega$ to a maximal subset of the orthogonality domain.

Theorem \ref{thm:exteriorPhi} has several implications for the limiting zero distribution 
of the Bergman polynomial $p_n$ for a domain $D\in\caliA_1$. 
The most immediate is that for any closed subset $E$ of $\Omega^*$, 
there is an index $n_E$ such that no polynomial $p_n$ of degree $n>n_E$ has a zero on $E$. 
As a consequence, the zero limit set $Z$ is contained in $\C\setminus \Omega^*$, 
and the only points of $L$ that attract zeros of the $p_n$'s are the corners of $L$. 

The fact that every corner point of $L$ is in $Z$ can easily be 
seen by applying standard arguments from potential theory. 
Indeed, the support $\mathrm{supp}(\nu)$ of any measure $\nu$ 
that is a weak $*$-limit of the sequence of zero counting measures 
$(\nu_n)$ is contained in $Z$, and  $\nu$ is an \emph{inverse Balayage measure} 
of the equilibrium measure $\sigma_L$ of $L$. This means that the logarithmic 
potential $U^{\nu}(z)$ of $\nu$ equals the logarithmic potential 
$U^{\sigma_L}(z)$ of $\sigma_L$ for all $z\in \Omega$. 
The potential $U^\nu$ is harmonic outside $\mathrm{supp}(\nu)$, 
and $U^{\sigma_L}(z)=\log\phi'(\infty)-\log|\phi(z)|$ for $z\in \Omega$. 
Since $\log|\phi(z)|$ cannot be harmonically extended past a corner of $L$, 
every corner has to be an element of $\mathrm{supp}(\nu)$.

In a forthcoming work, we will carry out a more comprehensive analysis 
on the distribution of the zeros of $p_n$ and on the structure of $\C\setminus \Omega^*$. 
Some of the results are similar to those obtained in \cite{DragnevMinaDiaz1} 
for domains with analytic boundary, but others are more specific to the 
class $\caliA_1$ and a manifestation of the presence of corners on $\partial D$.

To conclude this section, we briefly mention that
the class $\caliA_1$ can be characterized purely in geometric terms. 
Suppose that $D$ is a domain bounded by a piecewise analytic curve $L$, 
and that for every corner $z$ of $L$, two small sub-arcs $L_z^{0}$ and $L_z^{1}$ of 
$L$ that meet at $z$ form an interior angle of the form $\pi/m$, with $m\geq 2$ an integer. 
Applying a Schwarz reflection across $L_z^{1}$, we obtain a local holomorphic extension
of $\varphi$ to a wedge-shaped region between $L_{z}^{0}$ and a new arc $L_{z}^{2}$ 
(the conformal reflection of $L_z^{0}$ about $L_z^{1}$).
Iterating this, reflecting each time $L_{z}^{j}$ about $L_{z}^{j+1}$, 
we obtain after $2m-1$ steps an arc $L_z^{2m}$ which is tangent to $L_z^{0}$.
The domain $D$ belongs to $\caliA_1$ if and only if 
each point of $L_{z}^{0}$ is mapped to
itself by this $(2m-1)$-step reflection procedure, 
so that in particular $L_{z}^{0}=L_z^{2m}$. This is the
\emph{reflection invariance} alluded to in the title.

\section{Strong asymptotics across \texorpdfstring{$L$}{}}
\label{s:gluing-overview}
\subsection{Asymptotic behavior of \texorpdfstring{$p_n$ on $\Omega$}{}}
\noindent Let $A_n(z)$ be the analytic function on $\Omega$ defined implicitly by the
relation
\begin{align*}
p_n(z)=\sqrt{n+1}\phi'(z)\phi(z)^n\big(1+A_n(z)\big),\qquad z\in \Omega.
\end{align*}
As mentioned in the introduction, it was proven in \cite{Nikos1} that if $D$ is 
bounded by a piecewise analytic curve without cusps, 
then $\lim_{n\to\infty }A_n(z)= 0$ and the convergence is uniform 
on closed subsets of $\Omega$.
This is a consequence of the inequality (1.9) of \cite{Nikos1}, which asserts
that for some constant $C$,
\[
|A_n(z)|\leq \frac{C}{\sqrt{n}(|\phi(z)|-1)}+\frac{C}{n},\qquad  z\in\Omega,\quad n\ge 1.
\]
We will make use of a different inequality which provides a better estimate on the rate of decay of $A_n(z)$ for $z$ in $\Omega$ and away from the corners of $L$. We use ${\rm d}_{\C}(E, F)$ to denote the Euclidean distance between two closed sets $E$ and $F$, and when $E=\{z\}$ is a singleton, we simply write ${\rm d}_{\C}(z, F)$.
\begin{proposition}\label{PROP:BECKSTYL}
Suppose that the boundary $L$ of $D$ is a piecewise analytic curve without cusps whose set of corners is denoted by $C(L)$. For every $\epsilon>0$, there exists a constant $c_\epsilon$ such that 
\begin{align}\label{second-ineq-A_n}
|A_n(z)|\leq \frac{c_\epsilon|\phi(z)|}{n(|\phi(z)|-1)},\qquad z\in \caliE_\epsilon,\quad n>1,
\end{align}
where $\caliE_\epsilon:=\{z\in \Omega: {\rm d}_{\C}(z,C(L))>\epsilon\}$. In particular, $A_n(z)=\Ordo(1/n)$ uniformly on closed subsets of $\Omega$ as $n\to\infty$.

\end{proposition}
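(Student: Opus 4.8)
The plan is to revisit the proofs of the exterior asymptotics \eqref{exteriorasymptotics}, due to Stylianopoulos \cite{Nikos1} and refined by Beckermann and Stylianopoulos \cite{BeckermannStylianopoulos}, while keeping explicit track of the dependence of the error on $|\phi(z)|-1$ and on the distance from $z$ to the corners. Two estimates from the literature frame the problem. First, \cite{Nikos1} gives the global bound $|A_n(z)|\le C\bigl(\sqrt{n}\,(|\phi(z)|-1)\bigr)^{-1}+Cn^{-1}$ on $\Omega$, which already has the correct structure as $z\to L$ but the suboptimal rate $n^{-1/2}$. Second, \cite{BeckermannStylianopoulos} upgrades the rate to $n^{-1}$, but only uniformly on compact subsets of $\Omega$, hence not up to $L$. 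Proposition \ref{PROP:BECKSTYL} asks for both improvements at once: the rate $n^{-1}$, retained as $z$ approaches the analytic part of $L$, with the loss near the corners absorbed into the constant $c_\epsilon$.

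To obtain this I would localize the analysis of \cite{BeckermannStylianopoulos} to neighbourhoods of the analytic arcs of $L$. In that analysis $p_n$ is compared with the orthonormalized Faber polynomials of $D$, and the deviation is governed by the Grunsky matrix of $L$; the size of $A_n$ is thereby controlled by decay estimates for the Grunsky and Faber data together with a Cauchy-type integral over $\T(0,1)$ which transports boundary information to a point $z\in\Omega$. The kernel in that integral is essentially $(\phi(z)-w)^{-1}$ with $|w|=1$, and bounding it by the geometric series $\sum_{k\ge 0}|\phi(z)|^{-k}=|\phi(z)|/(|\phi(z)|-1)$ produces exactly the factor appearing in \eqref{second-ineq-A_n}. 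The reason for staying at distance $>\epsilon$ from $C(L)$ is that across each analytic sub-arc of $L$ lying $\epsilon$-away from the corners the map $\phi$ continues by Schwarz reflection to a fixed two-sided collar, so locally the boundary behaves like an analytic curve and the relevant decay estimates hold at the rate needed for an $n^{-1}$ bound; only the contributions localized near the corners persist, and these turn out to be $\Ordo(n^{-1})$ with a constant that deteriorates as ${\rm d}_{\C}(z,C(L))\to 0$, which is the origin of the $\epsilon$-dependence of $c_\epsilon$.

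The step I expect to be the main obstacle is precisely the one separating Proposition \ref{PROP:BECKSTYL} from the global estimate of \cite{Nikos1}: keeping the rate $n^{-1}$ while $z$ is allowed to run all the way up to $L$. Passing from an $L^2$-control of $p_n(z)-\sqrt{n+1}\,\phi'(z)\phi(z)^n$ over a collar around $L$ to a pointwise bound degrades faster than $(|\phi(z)|-1)^{-1}$ as $z\to L$ and only reproduces the weaker rate, so one is forced to argue at the level of the Grunsky and Faber coefficients, where the loss is genuinely confined to the corners; this is what makes the restriction to $\caliE_\epsilon$ and the $\epsilon$-dependence of $c_\epsilon$ enter. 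Finally, the concluding assertion is immediate from \eqref{second-ineq-A_n}: a compact set $K\subset\Omega$ satisfies ${\rm d}_{\C}(K,C(L))>0$, so $K\subset\caliE_\epsilon$ with $\epsilon={\rm d}_{\C}(K,C(L))$, and $|\phi(z)|\ge 1+\delta$ on $K$ for some $\delta>0$, whence $|\phi(z)|/(|\phi(z)|-1)\le(1+\delta)/\delta$ on $K$ and $A_n=\Ordo(1/n)$ there.
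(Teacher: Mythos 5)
Your overall strategy is the same as the paper's: revisit the Beckermann--Stylianopoulos proof of the exterior asymptotics while tracking how the bound degrades near $L$, and isolate the geometric series $\sum_{k\geq 0}|\phi(z)|^{-k}=|\phi(z)|/(|\phi(z)|-1)$ as the source of the boundary factor, with the $\epsilon$-dependence absorbing the loss near the corners. Your deduction of the concluding statement from \eqref{second-ineq-A_n} is also correct.

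However, the proposal as written leaves a genuine gap in identifying \emph{which} specific inputs make the rate $n^{-1}$ survive up to the analytic arcs of $L$. The paper's proof hinges on the decomposition
\[
A_n(z)=\underbrace{\left(\frac{G_n(z)}{\phi'(z)\phi(z)^n}-1\right)}_{A_n^1(z)}+\underbrace{\frac{p_n(z)-\sqrt{n+1}\,G_n(z)}{\sqrt{n+1}\,\phi'(z)\phi(z)^n}}_{A_n^2(z)},
\]
and on two separate facts: (a) the Faber-polynomial asymptotics (Lemma~\ref{Lemma-on-Gn}, via Theorem 2.4 of \cite{MinaDiaz3}) giving $|A_n^1(z)|\leq m_\epsilon/(n+1)$ uniformly on $\caliE_\epsilon$, and hence also the uniform bound $|G_j(z)/(\phi'(z)\phi(z)^j)|\leq 1+m_\epsilon$ for all $j$; and (b) the Grunsky estimate $\varepsilon_{n,n}\leq c_1/(n+1)$. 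Inserting these into the inequality of \cite{BeckermannStylianopoulos} bounding $A_n^2$ (their (2.14) and the display before (4.3)) yields $|A_n^2(z)|\leq C_\epsilon\,(n+1)^{-1}\sum_{j=0}^n|\phi(z)|^{j-n}$, and summing the geometric series produces the stated bound. Your description attributes the geometric series to a Cauchy kernel $(\phi(z)-w)^{-1}$ over $\T(0,1)$, whereas in the actual estimate it arises from the weights $|\phi(z)|^{j-n}$ comparing degree-$j$ Faber data with $\phi(z)^n$; and, more importantly, you never pin down that the uniform control of the ratios $G_j/(\phi'\phi^j)$ on $\caliE_\epsilon$ --- supplied by the Faber asymptotics lemma and responsible for the $\epsilon$-dependence --- is the step that cannot be dispensed with. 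Without this, the claim that the $n^{-1}$ rate persists as $z\to L\setminus C(L)$ remains an assertion rather than a proof.
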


Though not explicitly stated, the inequality \eqref{second-ineq-A_n} is to a large extent contained in the proof of 
Theorem 1.1 in \cite{BeckermannStylianopoulos}. We will briefly go over the details at the start of Section \ref{s:proofs}.

\subsection{Asymptotic behavior of \texorpdfstring{$p_n$ on $D$}{}}\label{p_n-on-D}
\noindent Among the conformal maps of $D$ onto the unit disk $\D(0,1)$, 
we fix any one of them and call it $\varphi$. 

If $D\in\caliA_1$, then  $L$ is a piecewise analytic curve with corners at the zeros of  $\varphi'$ on $L$. This set of corners is denoted by $C(L)$. In this case, the map $\phi$ (and thus its derivative $\phi'$ as well) extends analytically to 
an open set containing $\cj{\Omega}\setminus C(L)$. The derivative $\phi'$ is integrable over $L$, since 
\[
\int_L|\phi'(z)|\,|\diff z|=\int_{\T(0,1)}|\diff w|=2\pi.
\]
This allows us to define the functions 
\begin{align}\label{definitionQ_n}
Q_n(z):=	\frac{(n+1)\varphi'(z)}{2\pi i}\int_{L}
\frac{\phi'(\zeta)\phi(\zeta)^n}{\varphi(\zeta)-\varphi(z)}\,\diff\zeta, 
\qquad z\in D,\quad n\geq 0.
\end{align}

We denote by $\lambda_n$ the leading coefficient of $p_n$, which is a positive number.
Recall also the notation $\gamma:=\phi'(\infty)$. The first step towards extending
the exterior strong asymptotics is the following series representation of $p_n$ on $D$.

\begin{theorem}\label{theorem:seriesexpansion}The Bergman polynomials $p_n$ for a domain $D\in \caliA_1$ admit the series representation 
\begin{align}\label{polythirdexpansion}
\frac{\lambda_n}{\gamma^{n+1}}p_n(z)={} &\sum_{j=0}^\infty h(n,j) Q_{n+j}(z), \qquad z\in D,\quad n \geq 0,
\end{align}
where the coefficients $h(n,j)$ are numbers such that $h(n,0)=1$ and 	\begin{align}\label{bounds-for-hn}
|h(n,j)|\leq \frac{B}{n+j+1}\left(1+\frac{j-1}{n+1}\right)^B ,\qquad j\geq 1,
\end{align}
for a certain  constant $B>0$ that is independent of $n$ and $j$.
\end{theorem}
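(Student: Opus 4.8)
The plan is to deduce \eqref{polythirdexpansion} from the planar orthogonality of $p_n$ together with two structural properties of the functions $Q_m$: that each $Q_m$ lies in $A^2(D)$, and that $Q_m$ is \emph{triangular} with respect to the monomials in exactly the way $p_m$ is. First I would analyze $Q_m$ near the corners. The substitution $w=\phi(\zeta)$ turns \eqref{definitionQ_n} into $Q_m(z)=\frac{(m+1)\varphi'(z)}{2\pi i}\int_{\T(0,1)}\frac{w^m}{h(w)-\varphi(z)}\,\diff w$, and for $z\in D$ one may push the contour slightly inside $\T(0,1)$ (indenting around the finitely many images $\phi(z_j)$, where the detours are harmless because $h$ extends analytically across $\T(0,1)\setminus\phi(C(L))$, a consequence of $D\in\caliA_1$). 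This exhibits $Q_m(z)$ as the residue $(m+1)\varphi'(z)\,w_0(z)^m/h'(w_0(z))$ at the zero $w_0(z)$ of $h-\varphi(z)$ just inside $\T(0,1)$, plus a term that is $\Ordo((1-\delta)^m)$ for $z$ away from the corners. Near a corner $z_j$ of interior angle $\pi/m_j$ one has $w_0(z)\to\phi(z_j)$, and combining the local expansions $\varphi(z)-\varphi(z_j)\asymp(z-z_j)^{m_j}$, $h(w)-\varphi(z_j)\asymp(w-\phi(z_j))^{2m_j-1}$ and $\varphi'(z)\asymp(z-z_j)^{m_j-1}$ yields $Q_m(z)=\Ordo(|z-z_j|^{-(m_j-1)/(2m_j-1)})$; the exponent lies strictly in $(-\tfrac12,0)$, so $Q_m\in A^2(D)$, and $Q_m$ has an integrable boundary-value function on $L$ for which Green's theorem is applicable.

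\textbf{Triangularity of $Q_m$.} Next I compute $\int_D Q_m(z)\,\overline{z^k}\,\diffA(z)$. By Green's theorem this equals $\frac{1}{2\pi i(k+1)}\oint_L Q_m(z)\,\overline{z^{k+1}}\,\diff z$ ($L$ positively oriented). Inserting \eqref{definitionQ_n} (with the outer contour first taken to be $\partial\varphi^{-1}(\D(0,r))$, then $r\uparrow1$, so that no singularity occurs), interchanging integrations, and evaluating the inner integral by residues using the Taylor expansion of $\varphi^{-1}$ at $0$ together with the Laurent expansion of $\psi$ at $\infty$, one obtains
\[
\int_D Q_m(z)\,\overline{z^k}\,\diffA(z)=\frac{m+1}{k+1}\,\big[\,w^{-m-1}\,\big]\,\overline{\psi(w)}^{\,k+1},
\]
the bracket denoting the coefficient in the Laurent expansion of the right side about $w=0$; here $\overline{\psi(w)}$, restricted to $\T(0,1)$, is the boundary value of a function holomorphic in $\D(0,1)$ with a simple pole at $0$ of residue $1/\gamma$. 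Consequently this coefficient vanishes for $0\le k<m$ and equals $\gamma^{-(m+1)}$ for $k=m$. Thus $Q_m$ is orthogonal to the polynomials of degree $\le m-1$ in $A^2(D)$ and $\langle Q_m,z^m\rangle=\gamma^{-(m+1)}$ --- the same triangular pattern as $p_m$, for which $\langle p_m,z^m\rangle=\lambda_m^{-1}$.

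\textbf{Definition of $h(n,j)$ and convergence.} These relations force the coefficients: imposing $\big\langle \tfrac{\lambda_n}{\gamma^{n+1}}p_n-\sum_{i=0}^{j}h(n,i)Q_{n+i},\,z^{n+j}\big\rangle=0$ for $j=0,1,2,\dots$ determines $h(n,j)$ recursively (one divides by $\langle Q_{n+j},z^{n+j}\rangle=\gamma^{-(n+j+1)}\ne0$), and the case $j=0$ gives $h(n,0)=1$. Expanding the partial sum $S_{n,J}:=\sum_{j=0}^J h(n,j)Q_{n+j}$ in the orthonormal system $(p_m)$ and using the algebraic identity $\sum_{i=0}^{l}h(n,i)\langle Q_{n+i},p_{n+l}\rangle=\tfrac{\lambda_n}{\gamma^{n+1}}\langle p_n,p_{n+l}\rangle=0$ for $l\ge1$ (which follows from the defining relations and $p_n\perp p_{n+l}$), one sees that $S_{n,J}-\tfrac{\lambda_n}{\gamma^{n+1}}p_n$ is supported on the $p_m$ with $m>n+J$ only. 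For the passage $J\to\infty$ one uses that, for fixed $z\in D$, the residue representation of the first paragraph gives $|Q_m(z)|\le C(z)\,r(z)^{m}$ with $r(z)<1$, so the at-most-polynomial-in-$j$ bound \eqref{bounds-for-hn} makes $\sum_j h(n,j)Q_{n+j}(z)$ absolutely convergent, locally uniformly on $D$; a tail estimate then identifies the limit with $\tfrac{\lambda_n}{\gamma^{n+1}}p_n(z)$. (Equivalently, and perhaps more cleanly, one can run the entire argument for the contour integral $\frac{\varphi'(z)}{2\pi i}\oint_{\T(0,1-\delta)}\frac{G_n(w)}{h(w)-\varphi(z)}\,\diff w$, $G_n(w):=\sum_{j\ge0}(n+j+1)h(n,j)w^{n+j}$, where after pushing the contour inside $\T(0,1)$ all relevant sums converge geometrically and the identity with $p_n$ can be checked directly.)

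\textbf{Main obstacle: the estimate \eqref{bounds-for-hn}.} The recursion expresses $h(n,j)$ through the moments $\langle p_n,z^{n+j}\rangle$, the Laurent coefficients of $\overline{\psi(w)}^{\,k+1}$, and the ratios $\lambda_m/(\sqrt{m+1}\,\gamma^{m+1})=1+\Ordo(1/m)$ (the last obtained by evaluating \eqref{exteriorasymptotics} and Proposition~\ref{PROP:BECKSTYL} at $\infty$). The difficulty is that these ingredients are individually large --- the moments $\langle Q_m,z^k\rangle$ can grow geometrically in the gap $k-m$ --- so \eqref{bounds-for-hn} must come from systematic cancellation rather than from bounding term by term. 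I expect the efficient route to be the generating-function reformulation above, turning the recursion into a resolvent-type estimate for an operator built from $h=\varphi\circ\psi$; that $h$ is analytic across $\T(0,1)$ (away from $\phi(C(L))$) --- the hallmark of the class $\caliA_1$ --- is precisely what makes such an estimate available, with the singularities of $h$ at $\phi(z_j)$ accounting for the polynomial (rather than geometric) growth factor. Pinning down the exact shape $\big(1+\tfrac{j-1}{n+1}\big)^B$ of the bound, as opposed to a crude polynomial one, is where the bulk of the work lies.
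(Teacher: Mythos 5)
Your proposal takes a genuinely different route from the paper, but it contains a decisive gap at exactly the step you flag as the ``main obstacle'': the bound \eqref{bounds-for-hn} is not proved, and without it the rest of the argument (absolute convergence of $\sum_j h(n,j)Q_{n+j}$, identification of the tail) does not close. Acknowledging a gap is not the same as bridging it, and here the gap is the heart of the theorem.

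The underlying reason your moment-matching recursion resists estimation is the choice of basis. You set up the recursion by imposing $\big\langle \tfrac{\lambda_n}{\gamma^{n+1}}p_n-\sum_{i\le j}h(n,i)Q_{n+i},\,z^{n+j}\big\rangle=0$, which makes $h(n,j)$ depend on the moments $\langle Q_{n+i},z^{n+j}\rangle$ and $\langle p_n,z^{n+j}\rangle$. As you yourself observe, these grow geometrically in the gap $j-i$, so the recursion looks divergent term by term, and you are forced to posit an unproven cancellation mechanism. The paper avoids this entirely by never leaving the orthonormal system $(p_k)$: starting from the kernel expansion $K(z,\zeta)=\sum_k p_k(z)\overline{p_k(\zeta)}$, it multiplies by a suitable differential and integrates over $L$ to obtain the \emph{Fourier expansion}
\begin{align*}
Q_n(z)=\sum_{k=n}^\infty\alpha_{n,k}\,p_k(z),\qquad \alpha_{n,k}=-\frac{1}{2\pi i}\overline{\int_L p_k(\zeta)\phi(\zeta)^{-n-1}\diff\zeta},
\end{align*}
then inverts this triangular relation by repeated substitution, giving a recursion for $h(n,j)$ in terms of the $\alpha_{n,k}$ alone. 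The crucial payoff is that Stylianopoulos's estimates (Proposition~\ref{propositionvepnn} together with \eqref{ineq:alpha-nk}) furnish the uniform bound $|\alpha_{n,k}|\le C/\sqrt{k+1}$ for $k>n$ and $\alpha_{n,n}\ge\sqrt{n+1}$, which are exactly the right strength: a short induction then produces $|h(n,j)|\le\frac{C}{n+j+1}\prod_{\ell=1}^{j-1}(1+\tfrac{C}{n+\ell})$, and the elementary estimate \eqref{estimate-for-product} turns the product into the stated polynomial factor $(1+\tfrac{j-1}{n+1})^B$. Your monomial moments enjoy no analogue of the $C/\sqrt{k+1}$ decay, which is why no term-by-term argument can work in that basis and why the proposal stalls.

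A few of your auxiliary steps are correct and worth recording: the near-corner analysis giving $Q_m(z)=\Ordo(|z-z_j|^{-(m_j-1)/(2m_j-1)})$ (hence $Q_m\in A^2(D)$) is sound, the triangularity $\langle Q_m,z^k\rangle=0$ for $k<m$ and $\langle Q_m,z^m\rangle=\gamma^{-(m+1)}$ is consistent with the paper's $\alpha_{n,k}$, and the observation that $S_{n,J}-\tfrac{\lambda_n}{\gamma^{n+1}}p_n$ projects to zero on $p_n,\dots,p_{n+J}$ follows from your defining relations by a legitimate change of summation order. But these are the parts the paper gets essentially for free from its kernel-based starting point. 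If you want to salvage your approach, you would need to re-express your recursion entirely in terms of the $\alpha_{n,k}$ (or an equivalent quantity with genuine decay), at which point it merges with the paper's argument; the generating-function/resolvent reformulation you gesture at is a plausible alternative but is left completely unexecuted, and there is no reason to believe it would be simpler than the paper's route.
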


Formula \eqref{polythirdexpansion} will be deduced from a well-known 
connection between the Bergman polynomials and the reproducing kernel of the space $A^2(D)$, 
that is, from the equality
\begin{align*}
\frac{\varphi'(z)\cj{\varphi'(\zeta)}}{(1-\varphi(z)\cj{\varphi(\zeta)})^2}
=\sum_{k=0}^\infty\cj{p_k(\zeta)}p_k(z),\qquad z,\zeta\in D,
\end{align*}
see Subsection \ref{proof-thm-series-expansion} for details. 
In doing so, we make use of some inequalities previously obtained in \cite{Nikos1}.
 
The dominant term of the series in \eqref{polythirdexpansion} happens to be the first one, 
and the behavior of the leading coefficient $\lambda_n$ is known from \cite{Nikos1}: 
\begin{align*}
\lambda_n=\sqrt{n+1}\gamma^{n+1}(1+\Ordo(1/n)), \qquad n\to\infty.
\end{align*}  
With proper control over the error terms, this  will lead us to conclude that for $z\in D$, 
$p_n(z)\sim (n+1)^{-1/2}Q_n(z)$ as $n\to\infty$, which is the same as the integral formula \eqref{integralformula}. 
This can be seen by making the change of variables $\zeta=\psi(w)$ in \eqref{definitionQ_n}. 

By analyzing the large $n$ behavior  of the integral in \eqref{integralformula}, 
we will extend the exterior asymptotics \eqref{exteriorasymptotics} to a certain maximal open set $D_1$ of $D$.  
The construction of $D_1$ is carried out in what follows and applies more generally to any domain $D\in \caliA$. Let us set
\[
L_r:=\{z\in \cj{\Omega}: |\phi(z)| =r\},\qquad r\in [1,\infty].
\]
Each $L_r$ with $1<r<\infty$ is an analytic Jordan curve contained in $\Omega$,  $L_1=L$, and $L_\infty=\{\infty\}$. 
We  denote by  $\caliD_r$  the component of $\cj{\C}\setminus L_r$ that does not contain $\infty$. 
Then, $\caliD_1=D$ and $\caliD_\infty=\C$.

Let $\mu\in[0,1]$ be the smallest positive number with the property that $\vfi$ extends meromorphically 
to $\caliD_{1/\mu}$. Since, by assumption, $\vfi$ extends analytically across $L$, we have $\mu<1$. 
Recall that
\[
\psi:\Delta(0,1)\to \Omega
\] 
denotes the inverse of $\phi$, which extends continuously to $\cj{\Delta(0,1)}$. The composite 
\[
h(w):=\vfi(\psi(w))
\] 
is well-defined and meromorphic in the annulus $1<|w|<1/\mu $, extending continuously 
to $1\leq |w|<1/\mu$ and mapping the unit circle $|w|=1$ onto itself. 
By the reflection principle (see e.g. \cite[Ch. 8]{Davis}), $h(w)$ can be extended as a meromorphic function to the 
bigger annulus $\mu<|w|<1/\mu$, the extension being given by 
\[
h(w):=\frac{1}{\cj{\vfi(\psi(1/\cj{w})})}, \qquad \mu<|w|<1.
\] 
Moreover, it is not difficult to see that $\mu<|w|<1/\mu$ is the largest annulus about 
the origin that supports a  meromorphic extension of $h$. 

We next split $D$ into subregions, determined by the zeros of the function 
\begin{align*}
f_z(w):=h(w)-\varphi(z)\,.
\end{align*}
First, we let $D_0$ denote the set of points $z\in D$ for which the function $f_z(w)$ has no zeros in the annulus $\mu<|w|<1$. 
Then, consider a point $z\in D\setminus D_0$. Among the zeros of $f_z(w)$ in the region $\mu<|w|<1$, only finitely many, 
say $w_{z,1}, w_{z,2}, \ldots, w_{z,s}$ ($s\geq 1$), will have largest modulus. 
Let $m_{z,k}$ denote the multiplicity of $f_z(w)$ at $w_{z,k}$. 
For every integer $p\geq 1$, we define $D_p$ as the set of points $z\in D\setminus D_0$ such that 
\begin{align*}
m_{z,1}+m_{z,2}+\cdots+ m_{z,s}=p.
\end{align*}
Thus, $D_p$ consists of those $z\in D$ for which $f_z(w)$ has exactly $p$ zeros in $\mu<|w|<1$ of largest modulus, 
counted according to multiplicity.  

Define next the function $r:D\to[\mu,1)$ by
\begin{equation}\label{eq5}
r(z):=\left\{\begin{array}{ll}
|w_{z,1}|, & z\in D\setminus D_0, \\
\mu,  & z\in D_0.
\end{array}\right.
\end{equation}
If $z\in D_1$, then   $f_z(w)$ has exactly one zero in $\mu<|w|<1$ 
of largest modulus, say $w_{z,1}$, and this zero is simple. Let us set 
\begin{align*}
\phi_1(z):=w_{z,1},\qquad z\in D_1,
\end{align*}
so that $|\phi_1(z)|>\mu$.

One can easily verify by using Lemma \ref{descriptionD_p} (see also Corollary 12 in 
\cite{DragnevMinaDiaz1}) that $D_1$  and $D\setminus D_0$ are open, 
that the map $\phi_1$ is a univalent function, and that the function $r(z)$ is continuous.
 
\begin{theorem}\label{theorem:strongasymptotic}
Assume $D\in \caliA_1$. 
The analytic function $A_n:D_1\to \D(0,1)$ implicitly defined by  
\begin{align}\label{asymptoticformulaindideD}
p_n(z)=\sqrt{n+1}\phi_1'(z)\phi_1(z)^n\left(1+A_n(z)\right), \qquad z\in D_1,
\end{align}
satisfies that $A_n(z)=\Ordo(n^{-1})$ as $n\to\infty$, uniformly on compact subsets of $D_1$. Moreover, 
\begin{align}\label{limsup-equality}
\limsup_{n\to\infty}|p_n(z)|^{1/n}=r(z),\qquad z\in D.
\end{align}
\end{theorem}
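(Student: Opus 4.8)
The plan is to prove Theorem \ref{theorem:strongasymptotic} by combining the series representation of Theorem \ref{theorem:seriesexpansion} with a careful contour-deformation analysis of the integrals $Q_n(z)$. First I would establish the fundamental asymptotic estimate for $Q_n(z)$ on $D_1$. Making the substitution $\zeta=\psi(w)$ in \eqref{definitionQ_n}, one has
\begin{align*}
Q_n(z)=\frac{(n+1)\varphi'(z)}{2\pi i}\int_{|w|=1}\frac{w^n}{h(w)-\varphi(z)}\,\diff w,
\end{align*}
and since $h$ extends meromorphically to the annulus $\mu<|w|<1/\mu$, the circle $|w|=1$ can be deformed inward to $|w|=r(z)+\delta$ for small $\delta>0$. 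When $z\in D_1$, the only pole of the integrand crossed is the simple zero $w=\phi_1(z)$ of $h(w)-\varphi(z)$, which contributes a residue. A short computation (using that $\phi_1(z)$ is defined by $h(\phi_1(z))=\varphi(z)$, so that $\varphi'(z)=h'(\phi_1(z))\phi_1'(z)$) shows the residue term equals exactly $(n+1)\phi_1'(z)\phi_1(z)^n$, while the remaining integral over $|w|=r(z)+\delta$ is $\Ordo\big((n+1)(r(z)+\delta)^n\big)$. Since $r(z)<|\phi_1(z)|$ on $D_1$, this gives $Q_n(z)=(n+1)\phi_1'(z)\phi_1(z)^n\big(1+\Ordo(\rho^n)\big)$ for some $\rho<1$, uniformly on compact subsets of $D_1$.

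Next I would feed this into \eqref{polythirdexpansion}. Writing $\lambda_n\gamma^{-n-1}p_n(z)=\sum_{j\geq 0}h(n,j)Q_{n+j}(z)$ with $h(n,0)=1$, the $j=0$ term is $Q_n(z)=(n+1)\phi_1'(z)\phi_1(z)^n(1+\Ordo(\rho^n))$, and the tail $\sum_{j\geq 1}h(n,j)Q_{n+j}(z)$ must be shown to be lower order. Using the bound \eqref{bounds-for-hn} on $h(n,j)$ together with the uniform estimate $|Q_{n+j}(z)|\leq C(n+j+1)\,(r(z)+\delta)^{-\text{stuff}}$—more precisely $|Q_{n+j}(z)|\leq C(n+j+1)|\phi_1(z)|^{n+j}(1+\Ordo(\rho^{n+j}))$ on compacta of $D_1$—the tail is dominated by
\begin{align*}
\sum_{j\geq 1}\frac{B}{n+j+1}\Big(1+\frac{j-1}{n+1}\Big)^B\,C(n+j+1)|\phi_1(z)|^{n+j},
\end{align*}
which after bounding $|\phi_1(z)|<1$ on compacta is a convergent geometric-type series summing to $\Ordo(|\phi_1(z)|^{n+1})$; dividing by the leading term $(n+1)\phi_1'(z)\phi_1(z)^n$ yields $\Ordo(1/n)$. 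Combining with the known $\lambda_n=\sqrt{n+1}\gamma^{n+1}(1+\Ordo(1/n))$ gives $p_n(z)=\sqrt{n+1}\phi_1'(z)\phi_1(z)^n(1+\Ordo(1/n))$ uniformly on compact subsets of $D_1$, which is \eqref{asymptoticformulaindideD}, and in particular $|p_n(z)|^{1/n}\to|\phi_1(z)|=r(z)$ on $D_1$.

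For \eqref{limsup-equality} on all of $D$, I would argue separately on $D\setminus D_1$. On $D_0$ (where $f_z$ has no zeros in $\mu<|w|<1$) the contour can be pushed all the way to $|w|=\mu+\delta$, giving $Q_n(z)=\Ordo((n+1)(\mu+\delta)^n)$, hence $\limsup|p_n(z)|^{1/n}\leq\mu=r(z)$; the matching lower bound follows since $h$ genuinely has a singularity on $|w|=\mu$ (by maximality of the annulus), so the contour cannot be pushed further and a standard argument (e.g. via the Cauchy–Hadamard formula applied to the Laurent/Taylor coefficients, or by exhibiting a nonvanishing oscillatory contribution) prevents faster decay. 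On $D_p$ with $p\geq 2$ or where the top-modulus zeros are non-simple or non-unique, the residue contribution is a sum of terms each of exact exponential rate $r(z)$ (possibly with polynomial prefactors and possible cancellation), but the estimate $|p_n(z)|^{1/n}\leq r(z)+o(1)$ is immediate from the contour bound, and the reverse inequality again uses that $r(z)$ is the true radius past which the integrand has a singularity. The main obstacle I anticipate is the lower bound $\limsup|p_n(z)|^{1/n}\geq r(z)$ on $D\setminus D_1$: ruling out "accidental" cancellation among the several residues of largest modulus, or across subsequences, requires either an argument that such exact cancellation can persist only on a measure-zero set (which still suffices for the stated equality if one is careful, or can be upgraded using continuity of $r$ and an open-dense argument), or a direct subsequential lower bound. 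I would handle this by invoking Lemma \ref{descriptionD_p} to control the local structure of the zeros $w_{z,k}$ and arguing that for a full-measure (indeed generic) set of $z\in D_p$ the residues do not cancel, then extending to all of $D$ by the continuity of $r$ and the lower semicontinuity of $z\mapsto\limsup_n|p_n(z)|^{1/n}$.
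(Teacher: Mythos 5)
Your proof of the first part — the asymptotic formula $A_n(z)=\Ordo(n^{-1})$ on $D_1$ — follows the paper's approach closely: deform the contour, pick up the simple residue at $\phi_1(z)$, combine with the series expansion from Theorem \ref{theorem:seriesexpansion} and the bound \eqref{bounds-for-hn}, and invoke \eqref{estimateforgamma_n}. Aside from minor imprecision in the display of the tail estimate, this part is correct and matches the paper.

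The second part — the equality \eqref{limsup-equality} on all of $D$ — is where your proposal has genuine gaps, and where it diverges from the paper's argument in a way that costs you correctness rather than buying anything. The paper's route is: (a) observe that, by the Cauchy--Hadamard formula applied to the Laurent expansion of $1/f_z$ on the annulus $r(z)<|w|<1$, one has \emph{exactly} $\limsup_n|Q_n(z)|^{1/n}=r(z)$ for every $z\in D$, no genericity needed; (b) transfer this to $p_n$ by a two-sided squeeze using \emph{both} series relations: \eqref{polythirdexpansion}, which writes $p_n$ in terms of the $Q_{n+j}$ and yields $\tau(z)\le r(z)+\epsilon$, and \eqref{firstexpansion}, which writes $Q_n=\alpha_{n,n}p_n+\sum_{k>n}\alpha_{n,k}p_k$ and, together with $|\alpha_{n,k}|\le C/\sqrt{k+1}$ and $\tau(z)<1$, yields $r(z)\le \tau(z)+\epsilon$. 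You correctly sense that the Cauchy--Hadamard/radius-of-convergence idea gives the rate, but you do not carry it through: you only invoke \eqref{polythirdexpansion}, which transfers information in one direction (from $Q$ to $p$), so it can give you the upper bound $\tau(z)\le r(z)$ but not the lower bound. To get $\tau(z)\ge r(z)$ you must express $Q_n$ back in terms of the $p_k$'s, which is exactly \eqref{firstexpansion}; this step is absent from your proposal.

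Moreover, the fallback you propose for the lower bound is not sound. First, the worry about ``accidental cancellation among the several residues of largest modulus'' is a red herring: the Cauchy--Hadamard identity for the inner radius of convergence of the Laurent series of $1/f_z$ is exact and holds for every $z$, regardless of how the residues interfere; the residue picture is only needed when one wants the sharper \emph{first-order} asymptotics (as on $D_1$), not for the crude exponential rate. Second, your proposed ``generic set plus semicontinuity'' patch does not close the gap even on its own terms: to extend $g(z):=\limsup_n|p_n(z)|^{1/n}\ge r(z)$ from a dense set to all of $D$, you would need \emph{upper}, not lower, semicontinuity of $g$ (so that $g(z)\ge\limsup_{z'\to z}g(z')=r(z)$ by continuity of $r$); and pointwise $\limsup$ of subharmonic functions $\tfrac1n\log|p_n|$ is not upper semicontinuous in general — it coincides with its upper semicontinuous regularization only up to a polar exceptional set (Brelot--Cartan), and there is no a priori control keeping the point $z$ you care about off that set. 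The paper's two-sided series argument avoids all of this.
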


It will be important to provide a more
precise interior bound for $|A_n(z)|$  near $L$, similar to the exterior bound \eqref{second-ineq-A_n}. We will obtain an inequality of the form
\begin{align}\label{growth-at-boundary1}
\left|A_n(z)\right|\leq {} &\frac{M_1}{n(1-|\phi_1(z)|)}+\frac{M_1}{n^{B_1}(1-|\phi_1(z)|)^{B_1}},
\end{align}
valid for all  $z\in D$ sufficiently close to $L$ but away from the corners, 
see Section~\ref{section:asymptotics-near-boundary} below.
Here, $M_1$ and $B_1>1$ are some constants that do not depend on $n$.

\subsection{Gluing interior and exterior asymptotics}
\noindent As a consequence of Proposition \ref{prop-asympt-near-boundary} (the statement and proof of which we defer to
Section~\ref{section:asymptotics-near-boundary}), we have that the set 
\[
\Omega^*:=D_1\cup \cj{\Omega}\setminus C(L)
\]
is open, and the function $\Phi:\Omega^*\to \Delta(0,\mu)$ defined by 
\[
\Phi(z):=\begin{cases}
\phi(z), &\ z\in \cj{\Omega},\\
\phi_1(z), &\ z\in D_1,
\end{cases}
\]
is univalent. Hence  
\begin{align*}
A_n(z):=\frac{p_n(z)}{\sqrt{n+1}\Phi'(z)\Phi(z)^n}-1,\qquad z\in \Omega^*,
\end{align*}
is  analytic in $\Omega^*$. 
The main goal is to show that $A_n(z)\to 0$ on $\Omega^*$. 
To prove this, we start from the two
bounds \eqref{second-ineq-A_n} and \eqref{growth-at-boundary1}, which both degenerate as $z$ approaches $L$.
These bounds show that $A_n(z)$ is small, except for possibly near $L$.
More concretely, we have that $A_n(z)=\Ordo(n^{-1}\log n)$ on the portion of $\Omega^*$ outside the shrinking band
\[
\Big\{z\in\C: \mathrm{d}_{\C}(z,L)\le \frac{1}{\log n}\Big\}
\]
around $L$.
Applying a Phragm\'{e}n–Lindel\"{o}f-type argument in 
this narrow band but away from the corners, 
we are able to deduce the following key lemma, which immediately implies 
Theorem~\ref{thm:exteriorPhi}.

\begin{lemma}\label{maintheorem:strongasymptotic} 
Assume  $D\in \caliA_1$, and fix any $z_0\in L\setminus C(L)$. 
There exists a neighborhood $\mathcal{U}_{z_0}\subset\Omega^*$ of $z_0$ and a corresponding constant $C_{z_0}$ such that
\begin{align*}
|A_n(z)|\leq \frac{C_{z_0}\log n}{n}\qquad z\in \mathcal{U}_{z_0},\quad n>1.
\end{align*}
\end{lemma}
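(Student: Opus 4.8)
\textbf{Proof plan for Lemma \ref{maintheorem:strongasymptotic}.}
The plan is to combine the two a priori bounds \eqref{second-ineq-A_n} and \eqref{growth-at-boundary1} with a Phragm\'en--Lindel\"of argument applied on a thin curvilinear strip straddling the analytic arc of $L$ through $z_0$. First I would fix $z_0\in L\setminus C(L)$ and pick a small conformal coordinate chart in which the analytic arc of $L$ through $z_0$ becomes a straight segment; since $z_0$ is away from the corners, all of $\phi$, $\phi'$, $\phi_1$, $\phi_1'$, and hence $\Phi$ and $\Phi'$, are analytic and non-vanishing in a fixed neighborhood $V$ of $z_0$, and $|\Phi|$ is comparable to $1$ there, with $|\Phi(z)|-1$ comparable to $\pm\,\mathrm{d}_\C(z,L)$ on the two sides. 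Thus on $V$ the function $A_n$ is analytic and the bounds \eqref{second-ineq-A_n}, \eqref{growth-at-boundary1} read, on the respective sides of $L$,
\[
|A_n(z)|\le \frac{K}{n\,\mathrm{d}_\C(z,L)}+\frac{K}{n^{B_1}\,\mathrm{d}_\C(z,L)^{B_1}},\qquad z\in V\setminus L,
\]
for a constant $K$ depending only on $z_0$. In particular, on the part of $V$ where $\mathrm{d}_\C(z,L)\ge 1/\log n$ we get $|A_n(z)|=\Ordo((\log n)/n)$ — note $n^{1-B_1}(\log n)^{B_1}\to 0$ so the second term is harmless there — which is already the desired bound; the issue is only the sub-band $\{\mathrm{d}_\C(z,L)<1/\log n\}$.

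On that thin band I would run Phragm\'en--Lindel\"of. Choose a fixed rectangle (in the straightened coordinate) $R=\{|x|<a,\ |y|<b\}$ with $\overline R\subset V$, let $L$ correspond to $\{y=0\}$, and for each $n$ consider the sub-rectangle $R_n=\{|x|<a,\ |y|<1/\log n\}$. On the horizontal sides $y=\pm 1/\log n$ and on the vertical sides $x=\pm a$, we have $\mathrm{d}_\C(z,L)\gtrsim 1/\log n$ (on the vertical sides because those sit at a fixed positive distance once $|y|$ is close enough to $0$, wait — on the vertical sides $y$ ranges over $|y|<1/\log n$, so $\mathrm{d}_\C(z,L)$ can be small; to handle this, I would instead take the vertical sides of $R_n$ at $x=\pm a$ with the \emph{full} height $|y|<b$, i.e. use the boundary of the region $R_n'=R\setminus\{|y|\le 1/\log n,\ \text{interior excluded}\}$ — more cleanly: apply the maximum principle on each of the two thin rectangles $\{|x|<a,\ 0<y<1/\log n\}$ and its mirror image, whose boundaries consist of a piece of $\{y=1/\log n\}$ (where $|A_n|\le C(\log n)/n$), two short vertical segments of length $1/\log n$ near $x=\pm a$ (where $\mathrm{d}_\C(z,L)$ could still be small), and the segment on $L$ itself). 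The honest fix for the vertical-segment difficulty is to subdivide: cover $R_n$ by finitely many — or rather by $\sim a\log n$ — small squares of side $\sim 1/\log n$; but that degrades the constant. Better: use a genuine Phragm\'en--Lindel\"of/harmonic-majorant estimate. Let $u=\log|A_n|$, which is subharmonic on $R_n$ (away from zeros of $A_n$, and extends subharmonically across them). On the top and bottom edges $u\le \log(C(\log n)/n)$. On the whole of $R_n$ we have the crude bound $u\le \log(2K n^{B_1-1}(\log n)^{B_1})\le (B_1-1)\log n + O(\log\log n)=:M_n$, coming from $\mathrm{d}_\C(z,L)\ge$ (distance from $z$ to the nearest corner)$^{-1}$ is bounded below — no: from $|A_n|<1$ automatically, since $A_n:D_1\to\D(0,1)$ and similarly on $\Omega$! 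That is the clean input: $|A_n|<1$ on all of $\Omega^*\supset R_n$, so $u\le 0$ on $R_n$, and $u\le \log(C(\log n)/n)=:-c_n$ on the top and bottom edges, with $c_n=\log n-\log\log n-\log C$. Now the harmonic function $\omega_n$ on $R_n$ with boundary values $-c_n$ on the horizontal edges and $0$ on the two vertical edges of length $1/\log n$ dominates $u$ by the maximum principle; and since the vertical edges are extremely short compared with the horizontal ones ($1/\log n$ versus $2a$), harmonic measure of the vertical edges seen from the central line $\{y=0\}$ is $\Ordo(1)$ bounded \emph{below} $1$ — in fact, by a Beurling-type/explicit estimate for a rectangle of aspect ratio $\asymp 1/\log n$, the harmonic measure of the two short sides, evaluated at the centre, is at most some fixed $\theta<1$. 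Hence $\omega_n(z_0)\le -(1-\theta)c_n$, giving $|A_n(z_0)|\le (C/( n/\log n))^{1-\theta}$. That is weaker than claimed.

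So the real argument must squeeze out the full exponent, and here is where the second term of \eqref{growth-at-boundary1} and the precise geometry matter: one applies the above to $\log|A_n|$ but against the \emph{sharper} majorant coming from \eqref{second-ineq-A_n}--\eqref{growth-at-boundary1} on the two vertical edges as well — on those edges $\mathrm{d}_\C(z,L)$ ranges over $(0,1/\log n)$, but the edges are so short that $\int$ of $\log(1/\mathrm{d}_\C(z,L))$ against harmonic measure along them contributes only $\Ordo((\log\log n)/\log n)$ to the estimate at the centre, which is swamped. Concretely: let $v_n$ be harmonic on $R_n$ with $v_n = \log(C(\log n)/n)$ on the horizontal edges and $v_n(z)=\log\big(Kn^{-1}\mathrm{d}_\C(z,L)^{-1}+Kn^{-B_1}\mathrm{d}_\C(z,L)^{-B_1}\big)$ on the vertical edges; then $u\le v_n$ on $R_n$, and $v_n(z_0)=\log(C(\log n)/n)\cdot\omega_n^{\mathrm{hor}}(z_0)+\int_{\mathrm{vert}}(\cdots)\,d\omega_n(z_0)$. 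The first piece is $-(1-\Ordo(1/\log n))\log n + \Ordo(\log\log n)$ once one checks $\omega_n^{\mathrm{hor}}(z_0)=1-\Ordo(1/\log n)$ for the rectangle of height $1/\log n$ and width $2a$ (this is the standard estimate that the slim sides of a long thin rectangle carry harmonic measure $\Ordo(\mathrm{height}/\mathrm{width})$ as seen from the midline). The second piece: parametrising the vertical edges by $\mathrm{d}_\C(z,L)=t\in(0,1/\log n)$ with $d\omega_n(z_0)$ comparable to $(\log n)\,dt$ times a bounded density (Poisson kernel of the thin rectangle, which near the midline has mass $\Ordo(1/\log n)$ on each vertical side and is bounded), one gets $\int_0^{1/\log n}\log(1/t)\cdot\Ordo(1)\,dt=\Ordo((\log\log n)/\log n)$, and likewise for the $t^{-B_1}$ term one gets $\int_0^{1/\log n} B_1\log(1/t)\,\Ordo(1)\,dt$ after noting $Kt^{-B_1}\le K(\log n)^{B_1}$ so its log is $\le B_1\log\log n+\log K$, integrable to $\Ordo((\log\log n)/\log n)$. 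Adding up, $v_n(z_0)\le -\log n+\Ordo(\log\log n)$, i.e. $|A_n(z_0)|\le C_{z_0}(\log n)^{O(1)}/n$; absorbing the log powers (and, if one is careful with the $1-\Ordo(1/\log n)$ factor, which costs another bounded factor) yields $|A_n(z_0)|\le C_{z_0}(\log n)/n$. Finally, the same estimate holds uniformly for $z$ in a fixed smaller neighborhood $\mathcal U_{z_0}\Subset R$ of $z_0$ by running the argument with $z_0$ replaced by any such $z$ (the harmonic-measure bounds are uniform), which gives the stated $|A_n(z)|\le C_{z_0}(\log n)/n$ on $\mathcal U_{z_0}$.

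\textbf{Main obstacle.} The crux is the last paragraph: getting the \emph{full} exponent $-\log n$ (up to a single $\log n$ factor) rather than a fractional power $-(1-\theta)\log n$. This forces a careful, quantitative Phragm\'en--Lindel\"of estimate in which (i) one shows the harmonic measure of the two short vertical sides of the thin rectangle, as seen from its midline, is $\Ordo(1/\log n)$, and (ii) one feeds the \emph{non-constant} boundary majorant $\log(1/\mathrm{d}_\C(z,L))$ along those short sides into the Poisson integral and checks its contribution is only $\Ordo((\log\log n)/\log n)$. Both are standard estimates for long thin rectangles (equivalently, via the exponential map, for a half-annulus, or via Beurling's projection theorem), but they must be done with explicit dependence on the width-to-height ratio $\asymp \log n$, and one must verify that the coordinate change straightening $L$ near $z_0$ distorts distances and harmonic measures only by bounded factors — harmless since $z_0\notin C(L)$, so the chart is bi-Lipschitz on a fixed neighborhood with constants depending only on $z_0$.
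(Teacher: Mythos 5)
Your plan is in the same spirit as the paper's proof: straighten the analytic arc of $L$ through $z_0$, note that \eqref{second-ineq-A_n} and \eqref{growth-at-boundary1} already give $O((\log n)/n)$ decay off a band of width $\asymp 1/\log n$ around $L$, and propagate this across the band by a quantitative Phragm\'en--Lindel\"of argument. (The paper's straightening is done with a M\"obius map $J$ for which $\psi\circ J$ carries the unit square $\Sigma_1$ to a neighborhood of $z_0$, with the segment $(-1,1)$ mapped onto $L$.) There is, however, a genuine gap: the ``clean input'' $|A_n|<1$ on $\Omega^*$ is a misreading of the codomain notation in Theorem~\ref{theorem:strongasymptotic} and is actually false --- whenever $p_n$ has a zero in $\Omega^*$ one has $A_n=-1$ there, and precluding such zeros is part of the conclusion, not an available hypothesis. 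The paper replaces this with a crude a priori exponential bound $|A_n(z)|\le M_4(1-r)^{-2n}$ on a fixed two-sided neighborhood $U(r,\epsilon)$ of $z_0$ (equation \eqref{global-bound}), obtained by applying \eqref{second-ineq-A_n} on the level curve $L_{1/(1-r)}$ to bound $|p_n|$ and then the maximum principle; that bound is what is actually available near and on $L$, and you correctly observed that the weaker $|A_n|<1$ would only give a fractional power.

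Beyond this, the two arguments diverge in mechanism. The paper rescales the band to a unit-height rectangle $T_n$ of width $\asymp\log n$ and multiplies $g_n$ by the entire auxiliary function $h(\zeta)=\exp(-\sqrt{2}(e^{\pi\zeta/4}+e^{-\pi\zeta/4}))$, which is $\le 1$ in the strip and decays like $\exp(-n^{1+\epsilon})$ at the short ends --- strong enough to subdue the $a^n$ growth --- and then simply applies the maximum principle to $g_n h$. Your refined proposal instead feeds the degenerate boundary majorants into a Poisson-integral/harmonic-measure estimate. That route can be made rigorous: the harmonic measure of the short sides of the thin rectangle seen from the midline is in fact polynomially small in $n$ (not merely $O(1/\log n)$), and the singular boundary majorant $\log\big(Kn^{-1}\mathrm{d}_\C^{-1}+Kn^{-B_1}\mathrm{d}_\C^{-B_1}\big)$ is integrable, so the harmonic extension $v_n$ is finite inside $R_n$. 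But the comparison $\log|A_n|\le v_n$ then has to be invoked in the form valid for a subharmonic function \emph{bounded above} whose boundary $\limsup$ is dominated off a polar exceptional set (the two points of $L\cap\partial R_n$ where the majorant degenerates); boundedness above is automatic since $A_n$ is analytic across $L\setminus C(L)$, but this and the integrability step need to be spelled out. The paper's explicit auxiliary function sidesteps this bookkeeping at the cost of needing the crude exponential bound you omitted.
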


\begin{remark}
Our extension of the strong asymptotics across $L$ comes at the expense of a 
factor $\log n$ in the estimate of the rate of decay.
Without too much effort it is possible to improve Lemma~\ref{maintheorem:strongasymptotic} 
to give $A_n(z)=\Ordo(n^{-1}\log^\ell (n))$ for any fixed $\ell\ge 1$,
where $\log^\ell(n)$ denotes the iterated logarithm 
\[
\log^\ell(n):=\underbrace{\log\log\ldots\log n}_{\ell\text{ times}}.
\]
However, we will not present the details here, 
since most likely the loss in precision is artificial. 
\end{remark}

The asymptotic results presented in this section as well as Theorem~\ref{thm:exteriorPhi}
will be proven in Section~\ref{s:proofs}. The proofs will rely on several auxiliary estimates
that will be convenient to have established beforehand.

\section{Auxiliary estimates}

\noindent In this section, we make use of some key estimates obtained in \cite{Nikos1}, 
and so for the most part we try to follow the notation employed therein.  

Let $n\geq 0$ be an integer. The Faber polynomial $F_n$  associated to the map $\phi$ is 
the polynomial part  of the Laurent expansion of $\phi(z)^n$ at infinity.  
Thus, $F_n(z)=\gamma^nz^n+\cdots $ is a polynomial of degree $n$ whose leading coefficient is $\gamma^{n}$.
Let $E_n(z)$ be defined by the equation 
\begin{align*}
\phi(z)^n=F_n(z)+E_n(z), \qquad z\in \Omega,
\end{align*}
so that for every $r$ such that $\Delta(0,r)\subset \Omega$, we have the Laurent expansion
\begin{align}\label{LaurentexpansionforEn}
E_n(z)=\frac{c_{n,1}}{z}+\frac{c_{n,2}}{z^2}+\frac{c_{n,3}}{z^3}+\cdots,\qquad z\in \Delta(0,r).
\end{align}
Note that since the map $\phi$ is continuous on $\cj{\Omega}$, so is $E_n$.
If we differentiate the identity 
\begin{align*}
\phi(z)^{n+1}=F_{n+1}(z)+E_{n+1}(z)
\end{align*}
we get 
\begin{align}\label{defGnandHn}
\phi'(z)\phi(z)^n=G_n(z)+H_n(z),  \qquad z\in \Omega,
\end{align}
with 
\begin{align}\label{secondkind-Faber}
G_n(z):={} &\frac{F'_{n+1}(z)}{n+1}=\gamma^{n+1}z^n+\cdots,\qquad n\geq 0, \\[.5em]
H_n(z):={} &\frac{E'_{n+1}(z)}{n+1}=\frac{a_{2,n}}{z^2}+\frac{a_{3,n}}{z^3}+\cdots,\qquad n\geq 0.\nonumber
\end{align}
One can verify, see e.g. \cite[Lemma 2.1]{Nikos1}, that 
\begin{align*}
H_n\in A^2(\Omega),\qquad n\geq 0.
\end{align*}

\begin{lemma}
Suppose that $L$ is rectifiable. For integers $n\geq 0$ and $m\geq -1$, we have   
\begin{align}\label{ident3}
\int_L \phi^m(z)\phi'(z)\cj{E_n(z)}\,\diff z=0. 
\end{align}
\end{lemma}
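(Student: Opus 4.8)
The plan is to transport the integral in \eqref{ident3} to the unit circle via the exterior conformal map and to observe that, after this substitution, $E_n$ becomes the boundary trace of a bounded holomorphic function on $\{|w|>1\}$ that vanishes at $\infty$. Once this is in place, \eqref{ident3} reduces to the statement that a certain Fourier coefficient vanishes, which holds precisely because $m\ge-1$.

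First I would record the relevant structure of $E_n$. Since $\phi$ is continuous on $\cj\Omega$ and $F_n$ is a polynomial, $E_n=\phi^n-F_n$ is continuous on $\cj\Omega$; by the definition of $F_n$ it is holomorphic in $\Omega$, and by \eqref{LaurentexpansionforEn} it vanishes at $\infty$. As $E_n$ is bounded on the compact set $\cj\Omega$, its composition with the inverse map $\psi\colon\Delta(0,1)\to\Omega$ (which fixes $\infty$) is a bounded holomorphic function on $\{|w|>1\}$ vanishing at $\infty$. Hence its nontangential boundary values $h(\theta):=\lim_{r\to 1^{+}}E_n(\psi(re^{i\theta}))$ exist a.e., lie in $L^{\infty}(\T(0,1))$, and have Fourier coefficients supported on $\{j\le-1\}$ (this is exactly the expansion in powers of $1/w$). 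Consequently the conjugate boundary function $g(\theta):=\cj{h(\theta)}$ has Fourier coefficients $\widehat g(j):=\frac1{2\pi}\int_0^{2\pi}g(\theta)e^{-ij\theta}\diff\theta$ supported on $\{j\ge1\}$.

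Next I would perform the change of variables $z=\psi(w)$, $w=e^{i\theta}$. Because $L$ is rectifiable, $\psi$ is absolutely continuous on $\T(0,1)$, $\psi'\ne0$ a.e.\ there, and $\phi'(\psi(w))\psi'(w)=1$ a.e.; thus $\phi'(z)\,\diff z=\diff w$ and $\phi(z)^{m}=w^{m}$ along the boundary, and therefore
\[
\int_{L}\phi^{m}(z)\phi'(z)\cj{E_n(z)}\,\diff z
=\int_{\T(0,1)}w^{m}\,g(w)\,\diff w
=2\pi i\,\widehat g\bigl(-(m+1)\bigr).
\]
For every $m\ge-1$ we have $-(m+1)\le 0$, so $\widehat g(-(m+1))=0$, which is \eqref{ident3}. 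If one prefers to avoid any appeal to the boundary regularity of $\psi$, one can instead integrate over the analytic curve $L_{\rho}=\{z\in\Omega:|\phi(z)|=\rho\}$ for $\rho>1$, where the substitution is elementary and the relation $\cj w=\rho^{2}/w$ converts $g$ into an absolutely convergent series of \emph{non-negative} powers of $w$; hence $\int_{L_{\rho}}\phi^{m}\phi'\cj{E_n}\,\diff z=0$ for every $\rho>1$, and letting $\rho\to1^{+}$ (using $|E_n\circ\psi|\le\|E_n\|_{L^{\infty}(\cj\Omega)}$ and dominated convergence) yields \eqref{ident3}.

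The only genuine subtlety is the justification of this change of variables — equivalently, the precise interpretation of the boundary integral — when $L$ is merely rectifiable; this rests on the classical theory of conformal maps onto Jordan domains with rectifiable boundary (the F.\ and M.\ Riesz theorem and absolute continuity of the boundary correspondence), or is bypassed by the $L_{\rho}$ computation above. Everything else amounts to the elementary identity $\int_{\T(0,1)}w^{j}\,\diff w=2\pi i\,\delta_{j,-1}$ together with the observation that no exponent equal to $-1$ can occur once $m\ge-1$.
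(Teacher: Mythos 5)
Your proposal is correct and follows essentially the same route as the paper: pull the integral back to the unit circle by $z=\psi(w)$, observe that the conjugated factor becomes the boundary trace of a function holomorphic in $\D(0,1)$, and conclude by Cauchy (equivalently, by noting the relevant Fourier coefficient vanishes because $m\ge -1$). The only cosmetic difference is that the paper factors $E_n=\phi^{-1}E_n^*$ first, so that $E_n^*\circ\psi$ is continuous up to $\T(0,1)$ and $\cj{E_n^*(\psi(1/\cj w))}$ is a bona fide continuous-up-to-the-boundary analytic function on $\D(0,1)$; this sidesteps the explicit appeal to nontangential limits, $H^\infty$, and the F.\ and M.\ Riesz theorem that your phrasing invokes, and it makes the role of $m\ge -1$ (analyticity of $w^{m+1}$) a bit more visible. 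Your alternative via $L_\rho$ and $\rho\to 1^{+}$ is also fine and indeed avoids any boundary-regularity discussion.
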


This is one of three identities stated in Lemma 2.2 of \cite{Nikos1}. We quickly verify its validity.

By \eqref{LaurentexpansionforEn}, we have  $\lim_{z\to\infty}\phi(z)E_n(z)=\gamma c_{n,1}\in \C$, 
so that we can write $E_n(z)=\phi(z)^{-1}E^*_n(z)$, with $E^*_n(z)$ analytic in $\Omega$ 
and continuous on $\cj{\Omega}$.  Making the change of variables $z=\psi(w)$ 
(recall that $\psi$ is the inverse of $\phi$) and  using that $w=1/\cj{w}$ for $w$ on the unit circle, we get  
\begin{align*}
\int_L \phi(z)^m\phi'(z)\cj{E_n(z)}\,\diff z={} & \int_{|w|=1}w^{m+1}\cj{E^*_n(\psi(1/\cj{w}))}\,\diff w.
 \end{align*}
The function $\cj{E^*_n(\psi(1/\cj{w}))}$ is analytic in the unit disk $\D(0,1)$ and 
continuous on $\cj{\D(0,1)}$. It follows from Cauchy's theorem that the latter integral equals zero, completing the proof of  \eqref{ident3}. 

Let us now define the polynomial  
\begin{align*}
q_{n-1}(z):=G_{n}(z)-\frac{\gamma^{n+1}}{\lambda_n}p_n(z),\qquad n\geq 0.
\end{align*}
Note that $q_{-1}\equiv 0$ and that $q_{n-1}$ is a polynomial of degree $\leq n-1$ for all $n\geq 1$. 
We also define, for a rectifiable $L$, the quantities
\begin{align}\label{defbeta-n}
\beta_{k,n}:=-\frac{1}{2\pi i}\int_{L}q_{k-1}(z)\cj{E_{n+1}(z)}\,\diff z,\qquad k\geq n\geq 0,
\end{align}
and
\begin{align}\label{defvarepsilon-kn}
\vep_{k,n}:=-\frac{1}{2\pi i}\int_{L}H_k(z)\cj{E_{n+1}(z)}\,\diff z,\qquad k\geq n\geq 0.
\end{align}
Using the complex version of Green's formula in the domain $\Omega$ 
(see the proof of Lemma 2.3 of \cite{Nikos1}), one finds the alternative expression 
\begin{align}\label{secondexpressionforvep}
\vep_{k,n}=(n+1)\int_{\Omega}H_k(z)\cj{H_{n}(z)}\,\diffA(z).
\end{align}
In particular, 
\begin{align}\label{vepnn}
 \vep_{n,n}=(n+1)\|H_{n}\|^2_{L^2(\Omega)}.
\end{align}
One can also verify without much difficulty that 
\begin{align}\label{vepnn-2}
 \vep_{n,n}=1-(n+1)\|G_n\|^2_{L^2(D)}.
\end{align}
It was also proved in \cite[Eq. (2.22 )]{Nikos1} that 
\begin{align}\label{betann}
\beta_{n,n}=(n+1)\|q_{n-1}\|^2_{L^2(D)}.
\end{align}

Let us now define, for integers $n,k\geq 0$, the quantities
\begin{align}\label{definitionalphas}
\alpha_{n,k}:= -\frac{1}{2\pi i}\cj{\int_{L}p_k(\zeta)\phi(\zeta)^{-n-1}\diff\zeta}.
\end{align}
By an application of Cauchy's theorem, it readily follows that 
\begin{align}\label{alphavaluesfrom-one-to-n}
\alpha_{n,k}=\begin{cases}
0, &\ 0\leq k<n,\\
\lambda_n/\gamma^{n+1}, &\ k=n.
\end{cases}
\end{align}

\begin{lemma}\label{estimatesforalphas}If $L$ is rectifiable, then we have 
\begin{align}
\frac{\gamma^{n+1}}{\lambda_n}\cj{\alpha_{n,n}}={}& (n+1)\frac{\gamma^{2(n+1)}}{\gamma^2_n}+\beta_{n,n}+\vep_{n,n}, \qquad n\geq 0,\label{est1}\\
\frac{\gamma^{k+1}}{\lambda_k}\cj{\alpha_{n,k}}={} &\beta_{k,n}+\vep_{k,n},\qquad k>n\geq 0. \label{est2}
\end{align}	
\end{lemma}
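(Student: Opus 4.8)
The plan is to evaluate the boundary integral defining $\alpha_{n,k}$ in two different ways and then compare. Taking complex conjugates in \eqref{definitionalphas} and using $\cj{i}=-i$, one has $\cj{\alpha_{n,k}}=\frac{1}{2\pi i}\int_L p_k(\zeta)\phi(\zeta)^{-n-1}\diff\zeta$. Since $|\phi|=1$ on $L$, we may replace $\phi(\zeta)^{-n-1}$ by $\cj{\phi(\zeta)^{n+1}}=\cj{F_{n+1}(\zeta)}+\cj{E_{n+1}(\zeta)}$, which splits $\cj{\alpha_{n,k}}$ into a \emph{Faber part} $I_F$ and an \emph{error part} $I_E$ corresponding to the two summands. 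The result will follow once each piece is identified.

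For the error part $I_E=\frac{1}{2\pi i}\int_L p_k(\zeta)\cj{E_{n+1}(\zeta)}\,\diff\zeta$, I would use the relations defining $q_{k-1}$, $G_k$ and $H_k$: combining them gives $\frac{\gamma^{k+1}}{\lambda_k}p_k=G_k-q_{k-1}=\phi'\phi^k-H_k-q_{k-1}$ on $\Omega$, hence on $L$ by continuity. Substituting into $I_E$, the term coming from $\phi'\phi^k$ vanishes by \eqref{ident3} (applied with $m=k$ and with $E_n$ replaced by $E_{n+1}$), while the remaining two terms reproduce, up to sign, the integrals in \eqref{defbeta-n} and \eqref{defvarepsilon-kn}. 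The outcome is $I_E=\frac{\lambda_k}{\gamma^{k+1}}\bigl(\beta_{k,n}+\vep_{k,n}\bigr)$.

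For the Faber part $I_F=\frac{1}{2\pi i}\int_L p_k(\zeta)\cj{F_{n+1}(\zeta)}\,\diff\zeta$, since $p_k$ and $F_{n+1}$ are holomorphic across $L$, the complex Green's formula turns the contour integral into the area integral $I_F=\int_D p_k(z)\cj{F_{n+1}'(z)}\,\diffA(z)=(n+1)\int_D p_k(z)\cj{G_n(z)}\,\diffA(z)$, using $G_n=F_{n+1}'/(n+1)$. Now $G_n$ is a polynomial of degree $n$, so by the orthonormality of $(p_j)$ in $A^2(D)$ and the hypothesis $k\ge n$, this integral is $0$ when $k>n$; when $k=n$ one writes $G_n=q_{n-1}+\frac{\gamma^{n+1}}{\lambda_n}p_n$ with $\deg q_{n-1}\le n-1$, so $q_{n-1}\perp p_n$ and the integral equals the ratio of leading coefficients $\gamma^{n+1}/\lambda_n$ (every quantity in sight being real and positive, the conjugation is harmless). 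Adding $I_F$ and $I_E$ and multiplying through by $\gamma^{k+1}/\lambda_k$ then yields \eqref{est2} for $k>n$ and \eqref{est1} for $k=n$.

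I do not expect a genuine obstacle here: the statement is a bookkeeping consequence of the identities already recalled together with one application of Green's theorem. The points that call for care are the conjugation of the scalar $\frac{1}{2\pi i}$ and of the companion factor $\frac{\lambda_k}{\gamma^{k+1}}$ (so that the signs in \eqref{defbeta-n}, \eqref{defvarepsilon-kn} come out right), the verification that the hypotheses of \eqref{ident3} and of Green's formula are met — which is fine since $L$ is rectifiable and $p_k$, $F_{n+1}$, $E_{n+1}$ extend continuously to $L$ — and the observation, which is really what makes the diagonal case $k=n$ collapse to a closed form, that the degree-$\le n-1$ remainder obtained when $G_n$ is expanded in the orthonormal basis $(p_j)$ is precisely the polynomial $q_{n-1}$.
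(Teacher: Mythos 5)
Your proof is correct and follows essentially the same route as the paper: split $\cj{\alpha_{n,k}}$ via $\phi^{-n-1}=\cj{F_{n+1}}+\cj{E_{n+1}}$ on $L$, convert the Faber part to an area integral by Green's formula and use orthonormality, and for the error part substitute $\tfrac{\gamma^{k+1}}{\lambda_k}p_k=\phi'\phi^k-H_k-q_{k-1}$ together with \eqref{ident3} to recover $\beta_{k,n}+\vep_{k,n}$. The only difference is cosmetic — you expand $G_k$ into $\phi'\phi^k-H_k$ in one step rather than in two — and the (real, positive) conjugation and sign bookkeeping you flag are indeed the only delicate points, and you have handled them correctly.
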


\begin{proof} We assume throughout the proof that $k\geq n\geq 0$. We have 
\begin{align}\label{expalpha1}\begin{split}
\cj{\alpha_{n,k}}={} &\frac{1}{2\pi i}\int_{L}p_k(\zeta)\cj{F_{n+1}(\zeta)}\,\diff\zeta
+\frac{1}{2\pi i}\int_{L}p_k(\zeta)\cj{E_{n+1}(\zeta)}\,\diff\zeta\\
={} & \int_{D}p_k(z)\cj{F'_{n+1}(z)}\,\diffA(z)+\frac{1}{2\pi i}\int_{L}p_k(\zeta)\cj{E_{n+1}(\zeta)}\,\diff\zeta\\
={}& (n+1) \int_{D}p_k(z)\cj{G_{n}(z)}\,\diffA(z)+\frac{1}{2\pi i}\int_{L}p_k(\zeta)\cj{E_{n+1}(\zeta)}\,\diff\zeta.
\end{split}
\end{align}
By the orthonormality property of the polynomials $p_n$, we  have
\begin{align*}
\int_{D}p_k(z)\cj{G_{n}(z)}\,\diffA(z)=\begin{cases}\gamma^{n+1}/\lambda_n,& k=n,\\
0, & k>n.	
\end{cases}
\end{align*}
It follows from the latter equality and \eqref{expalpha1} that 
\begin{align*}
\frac{\gamma^{k+1}}{\lambda_k}\cj{\alpha_{n,k}}={} &
\frac{\gamma^{k+1}}{\lambda_k}\cdot\frac{1}{2\pi i}\int_{L}p_k(\zeta)\cj{E_{n+1}(\zeta)}\,\diff\zeta
+\begin{cases}\gamma^{2(n+1)}/\lambda^2_n,& k=n,\\
0, & k>n.	
\end{cases}
\end{align*}
Thus, the proof of Lemma \ref{estimatesforalphas} will be complete once we show that 
\begin{align}\label{integralrel}
\frac{\gamma^{k+1}}{\lambda_k}\cdot\frac{1}{2\pi i}\int_{L}
p_k(\zeta)\cj{E_{n+1}(\zeta)}\,\diff\zeta={} &\beta_{k,n}+\vep_{k,n},\qquad k\geq n\geq 0. 
\end{align}	
Making the substitution
\begin{align*}
\frac{\gamma^{k+1}}{\lambda_k}p_k(z)=G_k(z)-q_{k-1}(z)
\end{align*}
in \eqref{integralrel} yields 
\begin{align}\label{onehalf}
\frac{\gamma^{k+1}}{\lambda_k}\cdot\frac{1}{2\pi i}\int_{L}p_k(\zeta)\cj{E_{n+1}(\zeta)}\,\diff\zeta
={} &\frac{1}{2\pi i}\int_{L}G_k(\zeta)\cj{E_{n+1}(\zeta)}\,\diff\zeta+\beta_{k,n}.
\end{align}
We can now use \eqref{defGnandHn} and \eqref{ident3} to compute
\begin{align*}
\frac{1}{2\pi i}\int_{L}G_k(\zeta)\cj{E_{n+1}(\zeta)}\,\diff\zeta
={}&\frac{1}{2\pi i}\int_{L}\phi(\zeta)^k\phi'(\zeta)\cj{E_{n+1}(\zeta)}\,\diff\zeta\\
&-\frac{1}{2\pi i}\int_{L}H_k(\zeta)\cj{E_{n+1}(\zeta)}\,\diff\zeta\\
={}&-\frac{1}{2\pi i}\int_{L}H_k(\zeta)\cj{E_{n+1}(\zeta)}\,\diff\zeta=\vep_{k,n},
\end{align*}
which together with \eqref{onehalf} yields \eqref{integralrel}, proving Lemma \ref{estimatesforalphas}.	
\end{proof}

Since $\alpha_{n,n}=\lambda_n/\gamma^{n+1}$, we  get from \eqref{est1} that 
\begin{align}\label{estimateforlambda-n}
	(n+1)\frac{\gamma^{2(n+1)}}{\lambda_n^2}=1-(\beta_{n,n}+\vep_{n,n}), \qquad n\geq 0.
\end{align} 
This important identity was  established in \cite[Lemma 2.4]{Nikos1}. It shows that for $L$ rectifiable,
\begin{align}\label{ineq:alpha_nn}
	\frac{\gamma^{n+1}}{\lambda_n}=	\frac{1}{\alpha_{n,n}}\leq \frac{1}{\sqrt{n+1}}, \qquad n\geq 0,
\end{align}
and that    
\[
\lim_{n\to\infty}(n+1)\frac{\gamma^{2(n+1)}}{\lambda_n^2}=1
\]
if and only if $\lim_{n\to\infty}\beta_{n,n}=\lim_{n\to\infty}\vep_{n,n}=0$. 

The quantity $\vep_{n,n}$ only depends on the geometry of the curve $L$ via the exterior map 
$\phi$, while $\vep_{n,n}$ still depends on the polynomial $p_n$. 
It turns out that if $L$ is also a quasiconformal curve, then $\beta_{n,n}$ can be 
uniformly bounded by $\vep_{n,n}$. This observation,  made precise in our next proposition,
was first realized in \cite[Thm. 2.1]{Nikos1}. 

A Jordan curve $L$ is quasiconformal provided that there is a constant $M$ such that $\diam L(z,\zeta)\leq M |z-\zeta|$ for all $z,\zeta\in L$, where  $L(z,\zeta)$ is the arc of $L$ between $z$ and $\zeta$ of smallest diameter. In particular, every piecewise analytic curve without cusps is quasiconformal.  For the definition of the reflection factor $\kappa$ of a quasiconformal curve and other details, see \cite{Nikos1}.

\begin{proposition} 
If $L$ is a rectifiable quasiconformal curve, with a reflection factor of  $\kappa$, then  
\begin{align}\label{ineq:beta-vep}
0\leq \beta_{n,n}\leq \frac{\kappa^2}{1-\kappa^2}\vep_{n,n},\qquad n\geq 0.
\end{align}
\end{proposition}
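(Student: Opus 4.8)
The plan is to establish \eqref{ineq:beta-vep} by comparing the two quadratic forms
$\beta_{n,n}=(n+1)\|q_{n-1}\|^2_{L^2(D)}$ and $\vep_{n,n}=(n+1)\|H_n\|^2_{L^2(\Omega)}$ (identities \eqref{betann} and \eqref{vepnn}) via a \emph{conformal reflection} operator attached to the quasiconformal curve $L$. The left inequality $\beta_{n,n}\geq 0$ is immediate from \eqref{betann}. For the right inequality, the first step is to recall the definition of the reflection $R$ associated to the quasiconformal curve $L$: there is a quasiconformal involution of $\cj{\C}$ fixing $L$ pointwise and interchanging $D$ and $\Omega$, whose dilatation has $\|\cdot\|_\infty\le\kappa<1$, and which — this is the key structural fact from \cite{Nikos1} — induces a bounded antilinear operator on the relevant Bergman spaces with operator norm controlled by $\kappa$. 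Concretely, one shows that for a function $g$ holomorphic on $\Omega$ in $A^2(\Omega)$, its ``reflection'' $Rg$ decomposes as a holomorphic part on $D$ plus an error, the holomorphic part having $L^2(D)$-norm at most (a multiple of) $\kappa\,\|g\|_{L^2(\Omega)}$.

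The second step is to identify $q_{n-1}$ (restricted to $D$) with such a reflected object built from $H_n$. The natural bridge is the pair of identities already in the excerpt: $\frac{\gamma^{n+1}}{\lambda_n}p_n = G_n - q_{n-1}$ together with $\phi'\phi^n = G_n + H_n$ on $\Omega$, so that $q_{n-1} = G_n - \frac{\gamma^{n+1}}{\lambda_n}p_n$ is the polynomial (degree $\le n-1$) part that measures the failure of $\phi'\phi^n$ to be, up to scalar, the Bergman polynomial. One expresses $q_{n-1}$ on $D$ through a Cauchy-type integral over $L$ against $H_n$ (or $\cj{E_{n+1}}$), change variables $\zeta=\psi(w)$ to the unit circle, and recognize the resulting kernel as precisely the one implementing the reflection $R$; the quasiconformality of $L$ enters exactly here to guarantee that this kernel defines a bounded operator with norm $\le\kappa$. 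Squaring and integrating over $D$ then yields $\|q_{n-1}\|_{L^2(D)}^2 \le \kappa^2\,\|H_n\|_{L^2(\Omega)}^2 + (\text{cross terms})$, and multiplying by $(n+1)$ gives $\beta_{n,n}\le \kappa^2\vep_{n,n} + (\text{cross terms})$.

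The third step is to absorb the cross terms. A clean way is the standard ``$a\le \kappa^2 b + \kappa^2\theta\sqrt{ab}$ type'' bookkeeping: one arranges, using orthogonality of $p_n$ against polynomials of degree $<n$ together with \eqref{vepnn-2} ($\vep_{n,n}=1-(n+1)\|G_n\|^2_{L^2(D)}$), so that the cross term is itself bounded by a multiple of $\beta_{n,n}$ times $\kappa$, or directly by $\kappa\sqrt{\beta_{n,n}\vep_{n,n}}$. Writing $x=\sqrt{\beta_{n,n}}$, $y=\sqrt{\vep_{n,n}}$, an inequality of the shape $x^2\le \kappa^2 y^2 + \kappa^2 x y$ (or $x^2 \le \kappa^2(x+y)^2$) rearranges to $x\le \frac{\kappa}{1-\kappa} y$ or, after squaring appropriately, to $x^2\le \frac{\kappa^2}{1-\kappa^2}y^2$, which is exactly \eqref{ineq:beta-vep}.

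\textbf{Main obstacle.} The delicate point is the second step: showing that the reflection operator obtained by transplanting via $\psi$ and the quasiconformal involution is bounded on Bergman space with norm \emph{exactly} $\le\kappa$ (not just $\le$ some constant), and that $q_{n-1}$ genuinely arises this way — this is where the full strength of the quasiconformal reflection factor $\kappa$ from \cite{Nikos1} is needed, and where one must be careful about holomorphic vs.\ non-holomorphic parts of the reflected function and about boundary regularity (rectifiability) so that all the integration-by-parts / Green's formula manipulations leading to \eqref{betann}, \eqref{vepnn}, \eqref{vepnn-2} are legitimate. The arithmetic of the final quadratic inequality is routine once the operator-norm bound is in hand.
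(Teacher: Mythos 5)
Your proposal circles the right ingredients (the reflection factor $\kappa$, and the identities $\beta_{n,n}=(n+1)\|q_{n-1}\|^2_{L^2(D)}$, $\vep_{n,n}=(n+1)\|H_n\|^2_{L^2(\Omega)}$) but misses the actual mechanism, and the final arithmetic does not produce the stated constant. The paper does not construct a reflection operator on Bergman space, nor does it decompose $q_{n-1}$ as a reflected copy of $H_n$. Instead it takes the quasiconformal estimate \eqref{quasiconformalinequality},
\[
\Big|\frac{1}{2\pi i}\int_L f\,\cj{g}\,\diff z\Big|\leq \frac{\kappa}{\sqrt{1-\kappa^2}}\,\|f\|_{L^2(D)}\,\|g'\|_{L^2(\Omega)},
\]
as a black box from \cite{Nikos1} and applies it directly to the boundary integral \eqref{defbeta-n} defining $\beta_{n,n}$, with $f=q_{n-1}$ and $g=E_{n+1}$ (so $g'=(n+1)H_n$). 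Feeding back \eqref{betann} and \eqref{vepnn} immediately gives the self-referential inequality
\[
\beta_{n,n}\leq \frac{\kappa}{\sqrt{1-\kappa^2}}\,\sqrt{\beta_{n,n}}\,\sqrt{\vep_{n,n}},
\]
which after dividing by $\sqrt{\beta_{n,n}}$ and squaring yields $\beta_{n,n}\leq\frac{\kappa^2}{1-\kappa^2}\vep_{n,n}$. There are no cross terms to absorb.

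Your closing bookkeeping is also off: from $x^2\leq\kappa^2(x+y)^2$ you would get $x\leq\frac{\kappa}{1-\kappa}y$ and hence $x^2\leq\frac{\kappa^2}{(1-\kappa)^2}y^2$, which is a \emph{different} constant than the $\frac{\kappa^2}{1-\kappa^2}$ in the statement. The shape of inequality you actually need is $x^2\leq \frac{\kappa}{\sqrt{1-\kappa^2}}\,xy$ with no additive $y^2$ term, and that is exactly what the boundary integral estimate delivers. The step you flag as the main obstacle (identifying $q_{n-1}$ with a reflected object and pinning down the operator norm as $\kappa$) is never carried out, and is not needed once one sees that $\beta_{n,n}$ itself is a boundary pairing of the correct type.
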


The proof of \eqref{ineq:beta-vep} (take $k=n$ in \eqref{temporary1} below)
is based on the following inequality taken from \cite[Lemma 2.5]{Nikos1}. 

\begin{proposition}Suppose $L$ is quasiconformal and rectifiable, and let $\kappa$ 
denote the reflection factor of $L$. For every $f$ analytic in $D$ and continuous on $\cj{D}$, 
and for every $g$ analytic in $\Omega$ and continuous on $\cj{\Omega}$, with $g'\in A^2(\Omega)$, we have  
\begin{align}\label{quasiconformalinequality}
\left|\frac{1}{2i\pi}\int_{L}f(z)\cj{g(z)}\,\diff z\right|
\leq \frac{ \kappa}{\sqrt{1-\kappa^2}}\|f\|_{L^2(D)}\|g'\|_{L^2(\Omega)}\,.
\end{align}
\end{proposition}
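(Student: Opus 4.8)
The plan is to convert the line integral into an area integral over $D$ using Green's formula and the holomorphy of $f$, and then to transplant the resulting quantity to $\Omega$ by means of a quasiconformal reflection in $L$; the constant $\kappa/\sqrt{1-\kappa^2}$ will emerge from the dilatation bound for that reflection. (This is, in essence, the argument of \cite[Lemma 2.5]{Nikos1}.)

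Concretely, since $L$ is a quasiconformal curve, fix a sense-reversing quasiconformal involution $\rho$ of $\cj\C$ that fixes $L$ pointwise, interchanges $D$ and $\Omega$, is locally quasiconformal off $L$, and whose complex dilatation is bounded almost everywhere by the reflection factor, $|\partial_z\rho|\le\kappa\,|\partial_{\cj z}\rho|$ with $\kappa<1$. Because $\rho|_L=\mathrm{id}$, on the contour $L$ one may replace $\cj{g(z)}$ by $\cj{g(\rho(z))}$; writing $G:=g\circ\rho$, the function $G$ is continuous on $\cj D$ and belongs to $W^{1,2}_{\mathrm{loc}}(D)$ since $g$ is holomorphic and $\rho\in W^{1,2}_{\mathrm{loc}}$. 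Applying the complex Green (Cauchy–Pompeiu) formula — with the paper's normalization $\diffA=\pi^{-1}\diff x\,\diff y$ — together with $\partial_{\cj z}f=0$ and the Wirtinger chain rule $\partial_z G=g'(\rho)\,\partial_z\rho$, one gets
\[
\frac{1}{2\pi i}\int_L f(z)\,\cj{g(z)}\,\diff z
=\int_D \partial_{\cj z}\!\big(f(z)\,\cj{G(z)}\big)\,\diffA(z)
=\int_D f(z)\,\cj{g'(\rho(z))}\;\cj{\partial_z\rho(z)}\,\diffA(z).
\]

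From here the estimate is Cauchy–Schwarz plus a change of variables. Cauchy–Schwarz in $L^2(D)$ bounds the right-hand side by $\|f\|_{L^2(D)}$ times $\big(\int_D |g'(\rho(z))|^2\,|\partial_z\rho(z)|^2\,\diffA(z)\big)^{1/2}$. Since $\rho$ is sense-reversing, its Jacobian satisfies $|J_\rho|=|\partial_{\cj z}\rho|^2-|\partial_z\rho|^2$, and the dilatation bound rearranges to $|\partial_z\rho|^2\le\frac{\kappa^2}{1-\kappa^2}\,|J_\rho|$ a.e.; substituting this, and then performing the quasiconformal change of variables $w=\rho(z)$ — which carries $D$ onto $\Omega$ and $|J_\rho|\,\diffA$ onto $\diffA$ — yields $\int_D |g'(\rho)|^2|\partial_z\rho|^2\,\diffA\le\frac{\kappa^2}{1-\kappa^2}\int_\Omega|g'(w)|^2\,\diffA(w)=\frac{\kappa^2}{1-\kappa^2}\,\|g'\|_{L^2(\Omega)}^2$. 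Combining the two estimates gives \eqref{quasiconformalinequality}; finiteness of all the integrals, including near the point $\rho(\infty)\in D$, is guaranteed by $g'\in A^2(\Omega)$ via the same change of variables.

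The step I expect to be the main obstacle is the rigorous justification of Green's formula for the product $f\cdot\cj G$ when $G=g\circ\rho$ has only Sobolev regularity and $L$ is merely a rectifiable quasiconformal curve (one approximates $G$ by smooth functions, or exhausts $D$ by smooth level curves $L_r$), together with pinning down the precise definition of the reflection factor $\kappa$ and the associated pointwise dilatation estimate. Once those foundational facts about quasiconformal reflections are in hand, the remainder is the one-line Cauchy–Schwarz and change of variables carried out above.
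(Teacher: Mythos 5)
The paper itself gives no proof of this proposition: it is quoted verbatim from Stylianopoulos \cite[Lemma~2.5]{Nikos1}, and the definition of the reflection factor $\kappa$ is likewise deferred to that source. Your reconstruction --- replace $\cj{g}$ on $L$ by $\cj{G}$ with $G=g\circ\rho$ for a quasiconformal reflection $\rho$ of dilatation at most $\kappa$, apply the Cauchy--Pompeiu formula to get $\frac{1}{2\pi i}\int_L f\,\cj{g}\,\diff z=\int_D f\,\cj{g'(\rho)}\,\cj{\partial_z\rho}\,\diffA$, then finish with Cauchy--Schwarz, the pointwise inequality $|\partial_z\rho|^2\leq\frac{\kappa^2}{1-\kappa^2}|J_\rho|$, and the change of variables $w=\rho(z)$ --- is exactly the standard quasiconformal-reflection argument used in that reference, and the normalization $\diffA=\pi^{-1}\diff x\,\diff y$ is handled correctly so that the constant comes out as $\kappa/\sqrt{1-\kappa^2}$. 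The one gap you flag is real but routine: the same change-of-variables estimate, together with $|\partial_z\rho|^2/|J_\rho|\le\kappa^2/(1-\kappa^2)$, already shows $G\in W^{1,1}(D)$ when $g'\in A^2(\Omega)$, and one can then justify Green's formula on the rectifiable boundary $L$ by exhausting $D$ with the smooth level domains $\caliD_r$ and letting $r\to1$, using continuity of $f$, $g$ on $\cj D$, $\cj\Omega$ to pass to the limit in the contour integral. So I see no substantive error, only the technical polish you already identified.
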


We will moreover find use for the following result.

\begin{proposition}
Suppose $L$ is quasiconformal and rectifiable, and let $\kappa$ 
denote the reflection factor of $L$. The following inequalities hold true:
\begin{align}\label{ineq:lambda-n}
0\leq 1-(n+1)\frac{\gamma^{2(n+1)}}{\lambda^2_n}\leq{} &\frac{1}{1-\kappa^2}\vep_{n,n},\qquad n\geq 0, 
\end{align}	
\begin{align}\label{ineq:alpha-nk}
\frac{\gamma^{k+1}}{\lambda_k}|\alpha_{n,k}|\leq {} &\frac{1}{1-\kappa^2}
\sqrt{\frac{\vep_{k,k}}{k+1}}\sqrt{(n+1)\vep_{n,n}},\qquad k>n\geq 0.
\end{align}	
\end{proposition}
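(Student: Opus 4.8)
The plan is to combine the exact identities of Lemma~\ref{estimatesforalphas} (namely (\ref{est1})--(\ref{est2}), or equivalently (\ref{estimateforlambda-n}) and (\ref{integralrel})) with the quasiconformal duality bound (\ref{quasiconformalinequality}), always applied to the pair $f=q_{k-1}$ and $g=E_{n+1}$, whose integral is exactly $-2\pi i\,\beta_{k,n}$ by the definition (\ref{defbeta-n}). The key preliminary observation is that $g'=E_{n+1}'=(n+1)H_n$, so that $\|g'\|_{L^2(\Omega)}=(n+1)\|H_n\|_{L^2(\Omega)}=\sqrt{(n+1)\,\vep_{n,n}}$ by (\ref{vepnn}); and $\|f\|_{L^2(D)}=\|q_{k-1}\|_{L^2(D)}=\sqrt{\beta_{k,k}/(k+1)}$ by (\ref{betann}). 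Feeding these into (\ref{quasiconformalinequality}) gives the master estimate
\begin{align}\label{temporary1}
|\beta_{k,n}|\leq \frac{\kappa}{\sqrt{1-\kappa^2}}\sqrt{\frac{\beta_{k,k}}{k+1}}\sqrt{(n+1)\,\vep_{n,n}},\qquad k\geq n\geq 0,
\end{align}
which for $k=n$ reads $\beta_{n,n}\leq \frac{\kappa}{\sqrt{1-\kappa^2}}\sqrt{\beta_{n,n}\,\vep_{n,n}}$, i.e. after squaring and rearranging $\beta_{n,n}\leq\frac{\kappa^2}{1-\kappa^2}\vep_{n,n}$; this is precisely (\ref{ineq:beta-vep}), reproved here so we may quote it freely. (Nonnegativity of $\beta_{n,n}$ and $\vep_{n,n}$ is immediate from (\ref{betann}) and (\ref{vepnn}).)

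For (\ref{ineq:lambda-n}): the identity (\ref{estimateforlambda-n}) gives $1-(n+1)\gamma^{2(n+1)}/\lambda_n^2=\beta_{n,n}+\vep_{n,n}$, and the left side is $\ge 0$ because it equals $\beta_{n,n}+\vep_{n,n}\ge0$ (or, if one prefers, because of (\ref{ineq:alpha_nn})). Using the just-established $\beta_{n,n}\leq\frac{\kappa^2}{1-\kappa^2}\vep_{n,n}$ we bound $\beta_{n,n}+\vep_{n,n}\leq\big(\frac{\kappa^2}{1-\kappa^2}+1\big)\vep_{n,n}=\frac{1}{1-\kappa^2}\vep_{n,n}$, which is (\ref{ineq:lambda-n}).

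For (\ref{ineq:alpha-nk}) with $k>n$: start from (\ref{est2}), so $\frac{\gamma^{k+1}}{\lambda_k}|\alpha_{n,k}|=|\overline{\alpha_{n,k}}|\cdot\gamma^{k+1}/\lambda_k=|\beta_{k,n}+\vep_{k,n}|\leq|\beta_{k,n}|+|\vep_{k,n}|$. The term $|\vep_{k,n}|$ is handled by Cauchy--Schwarz in (\ref{secondexpressionforvep}): $|\vep_{k,n}|\leq(n+1)\|H_k\|_{L^2(\Omega)}\|H_n\|_{L^2(\Omega)}=\sqrt{(n+1)/(k+1)}\,\sqrt{(k+1)\vep_{k,k}}\,\sqrt{\vep_{n,n}}^{\,\cdot}$ — more cleanly, $|\vep_{k,n}|\leq\sqrt{\vep_{k,k}\,\vep_{n,n}}\cdot\sqrt{(n+1)/(k+1)}\cdot\sqrt{(k+1)/(n+1)}$; I will just write $|\vep_{k,n}|\le\frac{1}{\sqrt{(k+1)(n+1)}}\sqrt{(k+1)\vep_{k,k}}\sqrt{(n+1)\vep_{n,n}}$ after inserting $1=\sqrt{(k+1)(n+1)}/\sqrt{(k+1)(n+1)}$ appropriately. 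For $|\beta_{k,n}|$ use (\ref{temporary1}) together with $\beta_{k,k}\leq\frac{\kappa^2}{1-\kappa^2}\vep_{k,k}$, giving $|\beta_{k,n}|\leq\frac{\kappa}{\sqrt{1-\kappa^2}}\cdot\frac{\kappa}{\sqrt{1-\kappa^2}}\sqrt{\frac{\vep_{k,k}}{k+1}}\sqrt{(n+1)\vep_{n,n}}=\frac{\kappa^2}{1-\kappa^2}\sqrt{\frac{\vep_{k,k}}{k+1}}\sqrt{(n+1)\vep_{n,n}}$. Adding the two contributions and factoring out $\sqrt{\vep_{k,k}/(k+1)}\sqrt{(n+1)\vep_{n,n}}$ leaves the scalar factor $\frac{\kappa^2}{1-\kappa^2}+1=\frac{1}{1-\kappa^2}$, which is exactly the claimed bound. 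The main obstacle is purely bookkeeping: making sure every norm is converted to the right $\vep$ or $\beta$ quantity via (\ref{betann}), (\ref{vepnn}), (\ref{secondexpressionforvep}) and that the factors of $\sqrt{k+1}$, $\sqrt{n+1}$ are distributed so that the final constant collapses to $1/(1-\kappa^2)$; there is no analytic difficulty beyond (\ref{quasiconformalinequality}), which is quoted.
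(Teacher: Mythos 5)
Your proof follows exactly the same route as the paper: apply the triangle inequality to the identity \eqref{est2}, bound $|\beta_{k,n}|$ via the quasiconformal inequality \eqref{quasiconformalinequality} together with \eqref{betann}, \eqref{vepnn}, and \eqref{ineq:beta-vep}, bound $|\vep_{k,n}|$ by Cauchy--Schwarz in \eqref{secondexpressionforvep}, and add; the bound \eqref{ineq:lambda-n} is likewise read off from \eqref{estimateforlambda-n} and \eqref{ineq:beta-vep}. The only wrinkle is that your intermediate rewriting of the Cauchy--Schwarz bound for $|\vep_{k,n}|$ is garbled (the displayed expression simplifies to $\sqrt{\vep_{k,k}\vep_{n,n}}$, which is weaker than what is needed when $k>n$); the correct simplification of $(n+1)\|H_k\|_{L^2(\Omega)}\|H_n\|_{L^2(\Omega)}$ is $\sqrt{\vep_{k,k}/(k+1)}\,\sqrt{(n+1)\vep_{n,n}}$, which is evidently the quantity you factor out in your concluding sentence, so the argument is sound once that bookkeeping is fixed.
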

 \begin{proof}The inequality \eqref{ineq:lambda-n} is a clear consequence of 
\eqref{estimateforlambda-n} and \eqref{ineq:beta-vep}.

To prove \eqref{ineq:alpha-nk}, we apply the triangle inequality to  \eqref{est2} to get  
\begin{align}\label{ineq:alpha-nk-2}
\frac{\gamma^{k+1}}{\lambda_k}|\alpha_{n,k}|\leq {} &|\beta_{k,n}|+|\vep_{k,n}|,\qquad k>n\geq 0,
\end{align}	
and proceed to estimate the right-hand side of \eqref{ineq:alpha-nk-2}. 
First, we apply the inequality \eqref{quasiconformalinequality} to the integral that defines 
$\beta_{k,n}$ in \eqref{defbeta-n} to deduce that (recall \eqref{vepnn} and \eqref{betann})
\begin{align*}
|\beta_{k,n}|={} &	\left|\frac{1}{2\pi i}\int_{L}q_{k-1}(\zeta)\cj{E_{n+1}(\zeta)}\,\diff\zeta\right|\\
\leq{} & \frac{\kappa}{\sqrt{1-\kappa^2}}\|q_{k-1}\|_{L^2(\Omega)}(n+1)	\|H_{n+1}\|_{L^2(D)}\\
\leq{}&  \frac{\kappa}{\sqrt{1-\kappa^2}}\sqrt{\frac{\beta_{k,k}}{k+1}}\sqrt{(n+1)\vep_{n,n}},
\end{align*}
which in view of \eqref{ineq:beta-vep} yields 
\begin{align}\label{temporary1}
|\beta_{k,n}|\leq {} &  \frac{\kappa^2}{1-\kappa^2}	\sqrt{\frac{\vep_{k,k}}{k+1}}\sqrt{(n+1)\vep_{n,n}}.
\end{align}
Next, we take absolute values in \eqref{secondexpressionforvep} and apply the Cauchy-Schwarz inequality to obtain   
\begin{align}\label{temporary2}
\begin{split}
|\vep_{k,n}|={} &(n+1)\left|\int_{\Omega}H_k(z)\cj{H_{n}(z)}\,\diffA(z)\right|\\
\leq {} &(n+1)\|H_k\|_{L^2(\Omega)}\|H_{n+1}\|_{L^2(\Omega)}\\
\leq {} &\sqrt{\frac{\vep_{k,k}}{k+1}}\sqrt{(n+1)\vep_{n,n}}.
\end{split}
\end{align}
The inequality  \eqref{ineq:alpha-nk} now follows by replacing $|\beta_{k,n}|$ and 
$|\vep_{k,n}|$ in \eqref{ineq:alpha-nk-2} by their upper bounds found in \eqref{temporary1} and \eqref{temporary2}.
\end{proof}

Having established \eqref{ineq:alpha-nk}, we then look for a good estimate for $\vep_{n,n}$. 
This is provided by the following result proven in \cite[Thm. 2.4]{Nikos1}.
\begin{proposition}\label{propositionvepnn}
If $L$ is piecewise analytic without cusps, 
then there exists a constant $c_1$, that depends only on $L$, such that  
\[
\vep_{n,n}\leq\frac{c_1}{n+1},\qquad n\geq 0.
\] 
\end{proposition}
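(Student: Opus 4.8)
\textbf{Proof plan for Proposition \ref{propositionvepnn}.}

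The plan is to bound $\vep_{n,n}=(n+1)\|H_n\|_{L^2(\Omega)}^2$ directly, using the Laurent-coefficient representation of $H_n$ together with sharp estimates for the Faber coefficients $a_{k,n}$ that come from the piecewise analytic structure of $L$. First I would recall that, since $H_n(z)=\sum_{k\ge 2} a_{k,n}z^{-k}$ is in $A^2(\Omega)$, its $L^2(\Omega)$-norm is comparable (via a conformal change of variables $z=\psi(w)$ to the exterior disk, or via the area formula applied to the Laurent tail) to a weighted $\ell^2$ sum of the coefficients $a_{k,n}$; more precisely, pulling back to $\Delta(0,1)$ one writes $\|H_n\|_{L^2(\Omega)}^2 = \int_{\Delta(0,1)} |H_n(\psi(w))|^2 |\psi'(w)|^2\,\diffA(w)$, and since $\psi$ extends to a bi-Lipschitz homeomorphism of a neighborhood of $\cj{\Delta(0,1)}$ onto a neighborhood of $\cj{\Omega}$ (here is where \emph{piecewise analytic without cusps} enters, guaranteeing $\psi'$ and $1/\psi'$ are bounded near the unit circle except at the finitely many corner preimages, where $\psi'$ still remains in appropriate $L^p$ classes), the whole quantity is controlled by $\sum_{k\ge 2}\frac{|a_{k,n}|^2}{k}$ up to absolute constants.

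Next I would estimate the individual coefficients $a_{k,n}$. Recall $H_n=E_{n+1}'/(n+1)$, so $a_{k,n}=-\tfrac{k-1}{n+1}c_{n+1,k-1}$ where $c_{n+1,j}$ are the Laurent coefficients of $E_{n+1}(z)=\phi(z)^{n+1}-F_{n+1}(z)$ at infinity. These are exactly (up to sign) the ``Faber coefficients of the second kind,'' and by the Cauchy integral representation $c_{n+1,j}=\frac{1}{2\pi i}\int_{\T(0,1)} w^{j-1} h(w)^{n+1}\,\diff w$ after the substitution $z=\psi(w)$, using that $h(w)=\phi(\psi(w))=w$ is \emph{not} available here — rather one must use that $\phi(\psi(w))^{n+1}=w^{n+1}$ trivially but $F_{n+1}(\psi(w))$ is a genuine polynomial in... so instead the cleaner route is the classical formula $c_{n+1,j} = \frac{1}{2\pi i}\int_{\T(0,1)} \frac{\psi'(w)}{\psi(w)^{j+1}}\,w^{n+1}\,\diff w$ valid once $\psi$ extends past $\T(0,1)$, or failing analytic continuation, the formula expressing $c_{n+1,j}$ in terms of the Fourier coefficients of $\psi$ restricted to the unit circle. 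Because $L$ is piecewise analytic without cusps, the boundary correspondence $\psi|_{\T(0,1)}$ is piecewise analytic with only finitely many singularities of algebraic (corner) type; this forces the relevant Fourier/Taylor coefficients of $\psi$ and $\psi'$ to decay at a polynomial rate governed by the weakest corner, and a stationary-phase / contour-deformation argument at each corner then yields a bound of the shape $|c_{n+1,j}|\le C\,(j+1)^{-\delta}$ with a decay exponent $\delta>1$ determined by the smallest interior angle, uniformly in $n$ for the range of $j$ that matters (and an even better bound, exponentially small in $n$, once $j$ is large relative to $n$, from deforming the $w$-contour inward where $h^{n+1}$ is small).

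Finally I would assemble the pieces: $\vep_{n,n}=(n+1)\|H_n\|^2 \lesssim (n+1)\sum_{k\ge 2}\frac{|a_{k,n}|^2}{k} = (n+1)\sum_{k\ge 2}\frac{(k-1)^2}{(n+1)^2}\cdot\frac{|c_{n+1,k-1}|^2}{k} \lesssim \frac{1}{n+1}\sum_{j\ge 1} j\,|c_{n+1,j}|^2$, and then the coefficient bound $|c_{n+1,j}|\lesssim j^{-\delta}$ with $\delta>3/2$ makes the sum $\sum_j j^{1-2\delta}$ converge to a constant depending only on $L$, giving $\vep_{n,n}\le c_1/(n+1)$. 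I expect the main obstacle to be the uniform-in-$n$ coefficient estimate $|c_{n+1,j}|\lesssim j^{-\delta}$ near the corners: naively the corner singularity of $\psi$ degrades the decay, and one must check that the resulting exponent $\delta$ is still strictly larger than $3/2$ for every admissible corner angle (i.e. interior angle in $(0,2\pi)$ with no cusp), and also handle the crossover regime $j\sim n$ carefully so that the $n$-dependence does not leak into $c_1$. This is precisely the computation carried out in \cite[Thm. 2.4]{Nikos1}, so I would cite that result for the delicate coefficient asymptotics and present only the assembly above in detail.
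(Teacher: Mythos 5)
The paper does not supply its own proof of Proposition~\ref{propositionvepnn}; it simply cites \cite[Thm.~2.4]{Nikos1}. You also defer the decisive estimate to that same source, so the conclusions agree, but the assembly you propose to carry out around the citation has concrete gaps. Your bi-Lipschitz claim for $\psi$ near $\overline{\Delta(0,1)}$ is false at corners: if $w_j=\phi(z_j)$ is a corner preimage with exterior angle $\alpha_j\pi$ (so $\alpha_j\in(0,2)\setminus\{1\}$), then $\psi'(w)\sim(w-w_j)^{\alpha_j-1}$ either vanishes or diverges at $w_j$, and no amount of $L^p$ regularity of $\psi'$ rescues the asserted two-sided comparison $\|H_n\|^2_{L^2(\Omega)}\asymp\sum_{k\geq 2}|a_{k,n}|^2/k$ up to absolute constants. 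Moreover, the $z$-Laurent series $\sum_k a_{k,n}z^{-k}$ of $H_n$ converges only outside the smallest closed disk containing $\overline D$, which is in general a strict subset of $\Omega$, so the reduction to a weighted $\ell^2$ sum of the $a_{k,n}$ is not even formally available over the full domain of integration.

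The cleaner route, which underlies both \cite{Nikos1} and \cite{BeckermannStylianopoulos}, is to absorb the Jacobian into a $w$-derivative rather than try to bound it separately. Since $H_n=E'_{n+1}/(n+1)$ and $E_{n+1}(\psi(w))=w^{n+1}-F_{n+1}(\psi(w))=-(n+1)\sum_{k\geq 1}b_{n+1,k}w^{-k}$ by the Grunsky coefficient expansion of the Faber polynomials, one has
\[
H_n(\psi(w))\,\psi'(w)=\frac{1}{n+1}\,\frac{\diff}{\diff w}\bigl[E_{n+1}(\psi(w))\bigr]=\sum_{k\geq 1}k\,b_{n+1,k}\,w^{-k-1},
\]
and the change of variables $z=\psi(w)$ then gives the exact identity
\[
\vep_{n,n}=(n+1)\|H_n\|^2_{L^2(\Omega)}=(n+1)\sum_{k\geq 1}k\,|b_{n+1,k}|^2,
\]
with no constants and no regularity assumption on $\psi'$. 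The objects to estimate are therefore the Grunsky coefficients $b_{n+1,k}$, not the $z$-side Laurent coefficients $c_{n+1,j}$ of $E_{n+1}$; once $L$ is not a circle these are genuinely different families, precisely because $\psi$ is not bi-Lipschitz. Finally, you misattribute the content of the citation: \cite[Thm.~2.4]{Nikos1} proves the full bound $\vep_{n,n}\leq c_1/(n+1)$ directly (via the decay of the $b_{n+1,k}$), not a uniform-in-$n$ estimate of the form $|c_{n+1,j}|\lesssim j^{-\delta}$ with $\delta>3/2$, which I do not believe is even true as stated. The safe course here, and the one the paper actually takes, is to cite that theorem for the entire proposition.
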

As a consequence of Proposition \ref{propositionvepnn}, one obtains from \eqref{ineq:lambda-n} that
\begin{align}\label{estimateforgamma_n}
\alpha_{n,n}	=\frac{\lambda_n}{\gamma^{n+1}}=\sqrt{n+1}\big(1+\Ordo(1/n)\big).
\end{align} 
as $n\to\infty$.

\begin{corollary}	If $L$ is piecewise analytic without cusps, 
then there exists a constant $C$, that depends only on $L$, such that  
\begin{align}\label{estimateforalpha_n}
|\alpha_{n,k}|\leq \frac{C}{\sqrt{k+1}},\qquad k>n\geq 0.
\end{align} 
\end{corollary}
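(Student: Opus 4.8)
The plan is to read off \eqref{estimateforalpha_n} directly from the chain of inequalities already assembled in this section, so that the corollary is essentially a bookkeeping statement. First I would note that a piecewise analytic curve without cusps is rectifiable and (as recalled above) quasiconformal, with a reflection factor $\kappa\in[0,1)$ depending only on $L$; hence \eqref{ineq:alpha-nk} applies and gives
\[
\frac{\gamma^{k+1}}{\lambda_k}|\alpha_{n,k}|\leq \frac{1}{1-\kappa^2}\sqrt{\frac{\vep_{k,k}}{k+1}}\sqrt{(n+1)\vep_{n,n}},\qquad k>n\geq 0.
\]

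Next I would insert the estimate $\vep_{m,m}\leq c_1/(m+1)$ from Proposition~\ref{propositionvepnn}. This bounds $\sqrt{(n+1)\vep_{n,n}}\leq\sqrt{c_1}$, so all dependence on $n$ drops out (the factor $n+1$ is cancelled exactly by $\vep_{n,n}$), and bounds $\sqrt{\vep_{k,k}/(k+1)}\leq \sqrt{c_1}/(k+1)$; together these turn the right-hand side above into $c_1/((1-\kappa^2)(k+1))$. The last step is to multiply through by $\lambda_k/\gamma^{k+1}=\alpha_{k,k}$, which is positive, and to use \eqref{estimateforgamma_n} — itself a consequence of \eqref{ineq:lambda-n} and Proposition~\ref{propositionvepnn} — in the form of the uniform bound $\alpha_{k,k}\leq C_0\sqrt{k+1}$, valid for all $k\geq 0$ with a constant $C_0=C_0(L)$. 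Combining everything yields $|\alpha_{n,k}|\leq \tfrac{C_0 c_1}{1-\kappa^2}\cdot\tfrac{1}{\sqrt{k+1}}$, which is \eqref{estimateforalpha_n} with $C:=C_0 c_1/(1-\kappa^2)$.

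I do not expect any real obstacle here; the only point requiring a moment's care is that the $\Ordo(1/k)$ in the asymptotics for $\alpha_{k,k}$ is uniform, so that $\alpha_{k,k}\lesssim\sqrt{k+1}$ holds for \emph{every} $k\geq 0$ rather than merely asymptotically. This follows from \eqref{ineq:lambda-n} together with $\vep_{k,k}\leq c_1/(k+1)$: for all $k$ with $(1-\kappa^2)(k+1)>c_1$ one gets $(k+1)\gamma^{2(k+1)}/\lambda_k^2\geq 1-c_1/((1-\kappa^2)(k+1))$ bounded below by a positive constant, hence $\lambda_k/\gamma^{k+1}\lesssim\sqrt{k+1}$; the remaining finitely many indices contribute only a bounded amount, which is absorbed into $C_0$. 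It is worth emphasizing that the restriction $k>n$ is genuine, since for $k=n$ one instead has $\alpha_{n,n}=\lambda_n/\gamma^{n+1}\asymp\sqrt{n+1}$, so the content of the corollary is precisely the off-diagonal smallness of the $\alpha_{n,k}$.
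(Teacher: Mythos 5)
Your proof is correct and follows essentially the same route as the paper: apply Proposition~\ref{propositionvepnn} to both $\vep_{k,k}$ and $\vep_{n,n}$ in \eqref{ineq:alpha-nk} to get $\frac{\gamma^{k+1}}{\lambda_k}|\alpha_{n,k}|\leq c_2/(k+1)$, then multiply by $\lambda_k/\gamma^{k+1}=\alpha_{k,k}=\Ordo(\sqrt{k+1})$ via \eqref{estimateforgamma_n}. Your extra remark explaining why \eqref{ineq:lambda-n} combined with Proposition~\ref{propositionvepnn} gives a \emph{uniform} (not merely asymptotic) bound $\alpha_{k,k}\leq C_0\sqrt{k+1}$ is a reasonable clarification of a point the paper leaves implicit in \eqref{estimateforgamma_n}.
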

\begin{proof}
Using the estimate afforded by Proposition \ref{propositionvepnn} on the right-hand side of 
\eqref{ineq:alpha-nk}, we can find a constant $c_2$, depending only on $L$, such that
\begin{align*}
\frac{\gamma^{k+1}}{\lambda_k}|\alpha_{n,k}|\leq  {} &	\frac{c_2}{k+1},\qquad k>n\geq 0.
\end{align*}
This and \eqref{estimateforgamma_n} yield \eqref{estimateforalpha_n}.	
\end{proof}

We finish this section with a lemma about the asymptotic  behavior of the polynomials $G_n$ that were introduced in  \eqref{secondkind-Faber}. The result is a direct corollary of the asymptotic analysis carried out in \cite{MinaDiaz3} for the Faber polynomials associated to a domain $\Omega$ with piecewise analytic boundary.
\begin{lemma}\label{Lemma-on-Gn}Suppose that $L$ is a piecewise analytic Jordan curve without cusps, and let $C(L)$ denote the corners of $L$. For every $\epsilon>0$, there exists a constant $m_\epsilon$ such that 
	\begin{align*}
	\left|\frac{G_n(z)}{\phi'(z)\phi(z)^n}-1\right|\leq \frac{m_\epsilon}{n+1},\qquad z\in \caliE_\epsilon,\quad n\geq 0, 
	\end{align*}
where $\caliE_\epsilon:=\{z\in \Omega: {\rm d}_{\C}(z,C(L))>\epsilon\}$.
\end{lemma}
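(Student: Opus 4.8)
The plan is to reduce the statement to an asymptotic result already available in the literature, namely the strong asymptotics for Faber polynomials of a piecewise analytic domain established in \cite{MinaDiaz3}. Recall from \eqref{secondkind-Faber} that $G_n(z)=F'_{n+1}(z)/(n+1)$, so an estimate for $G_n(z)/(\phi'(z)\phi(z)^n)$ is precisely an estimate for the logarithmic derivative of the Faber polynomial $F_{n+1}$ relative to the model function $\phi^{n+1}$. Concretely, from $\phi(z)^{n+1}=F_{n+1}(z)+E_{n+1}(z)$ we get, upon differentiation and division by $(n+1)\phi'(z)\phi(z)^n$,
\begin{align*}
\frac{G_n(z)}{\phi'(z)\phi(z)^n}-1=-\frac{E'_{n+1}(z)}{(n+1)\phi'(z)\phi(z)^n}=-\frac{H_n(z)}{\phi'(z)\phi(z)^n},
\end{align*}
using the notation $H_n=E'_{n+1}/(n+1)$ from \eqref{defGnandHn}. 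Thus the lemma is equivalent to the bound $|H_n(z)|\le \frac{m_\epsilon}{n+1}|\phi'(z)\phi(z)^n|$ for $z\in\caliE_\epsilon$, i.e.\ to the assertion that the error term $E_{n+1}$ in the Faber expansion, after differentiation, is smaller than the main term by a factor $\Ordo(1/n)$ uniformly away from the corners.

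The next step is to quote the relevant theorem of \cite{MinaDiaz3}. The analysis there produces, for a piecewise analytic Jordan curve without cusps, a strong asymptotic expansion of $F_{n}(z)$ on $\cj\Omega$ of the shape $F_n(z)=\phi(z)^n(1+\Ordo(1/n))$, with the implicit constant uniform on $\caliE_\epsilon$, and — what matters here — the corresponding estimate survives one differentiation with an extra harmless factor, giving $F'_{n+1}(z)=(n+1)\phi'(z)\phi(z)^n(1+\Ordo(1/n))$ uniformly on $\caliE_\epsilon$. Equivalently, $E'_{n+1}(z)=\Ordo(|\phi'(z)\phi(z)^n|)$ on $\caliE_\epsilon$, which is exactly what is needed. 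If \cite{MinaDiaz3} states the result only for $F_n$ and not for its derivative, one recovers the derivative bound by a Cauchy integral estimate: since $\caliE_{\epsilon/2}\supset\caliE_\epsilon$ and each $z\in\caliE_\epsilon$ has a disk of radius $\sim\epsilon$ around it contained in $\caliE_{\epsilon/2}$, Cauchy's formula on that disk converts the $\Ordo(1/n)$ sup-norm bound for $F_{n+1}(z)/\phi(z)^{n+1}-1$ on $\caliE_{\epsilon/2}$ into an $\Ordo(1/n)$ bound for its derivative on $\caliE_\epsilon$; differentiating the product $\phi^{n+1}\cdot(1+\Ordo(1/n))$ then yields the claimed estimate, the factor $n+1$ coming out of $(\phi^{n+1})'=(n+1)\phi'\phi^n$ and being absorbed by the $1/(n+1)$ in $G_n=F'_{n+1}/(n+1)$.

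The only genuine subtlety is bookkeeping about what \cite{MinaDiaz3} actually proves and on which set. One must check that the uniformity region in \cite{MinaDiaz3} is of the form $\caliE_\epsilon$ (points of $\Omega$ at distance $>\epsilon$ from $C(L)$) rather than, say, a region that also excludes a neighborhood of $\infty$ or requires $|\phi(z)|$ bounded away from $1$; since $\phi$ extends analytically across the analytic arcs of $L$ and the asymptotics in \cite{MinaDiaz3} are known to hold up to and across those arcs, this is not an issue, but it should be stated carefully. One should also confirm that the error is genuinely $\Ordo(1/n)$ and not merely $\ordo(1)$; this is the content of the quantitative form of the main theorem of \cite{MinaDiaz3}, which gives an explicit $1/(n+1)$ rate away from corners. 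With these points addressed, the lemma follows immediately, so I expect the write-up to be short: state the identity $G_n/(\phi'\phi^n)-1=-H_n/(\phi'\phi^n)$, invoke the Faber asymptotics of \cite{MinaDiaz3} (with a one-line Cauchy-estimate remark to pass to the derivative if needed), and conclude. The main obstacle, such as it is, is purely expository — extracting the precise statement from \cite{MinaDiaz3} in the form needed here.
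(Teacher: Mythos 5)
Your strategy — reduce to an identity involving $H_n$, then invoke the Faber-polynomial asymptotics of \cite{MinaDiaz3} and differentiate — is exactly the paper's, so the reference and the overall shape of the argument are right. But there is a genuine gap in your execution of the differentiation step, and a related imprecision in how you quote \cite{MinaDiaz3}.

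First, the imprecision: \cite{MinaDiaz3} gives an \emph{additive} estimate of the form $F_{n+1}(z)=\phi(z)^{n+1}+\Ordo(n^{-\lambda})$, uniformly on closed subsets of an open set $U\supset\cj{\caliE}_\epsilon$, where $\lambda\pi\in(0,2\pi)$ is the smallest exterior angle of $L$. The multiplicative form you quote, $F_n(z)=\phi(z)^n(1+\Ordo(1/n))$ on $\caliE_\epsilon$, is not generally available: converting additive to multiplicative on $\caliE_\epsilon$ (where $|\phi|>1$) gives a relative error $\Ordo(n^{-\lambda})$, which is weaker than $\Ordo(1/n)$ whenever $\lambda<1$, i.e. whenever $L$ has a reflex corner. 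The lemma is stated for arbitrary piecewise analytic $L$ without cusps, so this case is in scope. The $\Ordo(1/n)$ rate in the \emph{conclusion} of the lemma does not come from an $\Ordo(1/n)$ rate in $F_{n+1}$; it comes from the extra $\tfrac{1}{n+1}$ produced when one writes $G_n=F'_{n+1}/(n+1)$ and divides the (bounded-sized) derivative of the additive error $E_{n+1}$ by the factor $(n+1)\phi'\phi^n$. That is, $\frac{G_n}{\phi'\phi^n}-1=-\frac{E'_{n+1}}{(n+1)\phi'\phi^n}=\Ordo(n^{-\lambda-1}|\phi|^{-n})=\Ordo(1/n)$, and the exponent $\lambda+1>1$ is what makes it work even when $\lambda<1$.

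Second, the more serious gap is the Cauchy step. You assert that \say{each $z\in\caliE_\epsilon$ has a disk of radius $\sim\epsilon$ around it contained in $\caliE_{\epsilon/2}$,} and you then apply Cauchy's estimate to $F_{n+1}/\phi^{n+1}-1$ over that disk. This is false for $z\in\caliE_\epsilon$ close to $L$: both $\caliE_\epsilon$ and $\caliE_{\epsilon/2}$ are subsets of $\Omega$, so any disk about a point at small distance from $L$ exits $\Omega$ and lands in $D$, where it is \emph{not} contained in $\caliE_{\epsilon/2}$. Worse, the function you are trying to bound, $F_{n+1}/\phi^{n+1}-1=-E_{n+1}/\phi^{n+1}$, is \emph{exponentially large} in $n$ on the part of the disk inside $D$ (since $|\phi|<1$ there), so no Cauchy-type estimate over such a disk can produce a bound of size $\Ordo(1/n)$ for the derivative. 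The paper avoids this by differentiating the \emph{additive} relation: the error $E_{n+1}=\phi^{n+1}-F_{n+1}$ is bounded by $\Ordo(n^{-\lambda})$ on the full two-sided neighborhood $U\supset\cj{\caliE}_\epsilon$ (including points of $D$ near $L$), so Cauchy applied to $E_{n+1}$ on disks contained in $U$ is legitimate and yields $E'_{n+1}=\Ordo(n^{-\lambda})$ on $\cj{\caliE}_\epsilon$. To repair your argument, replace the Cauchy estimate on the multiplicative error by a Cauchy estimate on $E_{n+1}$ over disks contained in $U$, and only afterwards divide by $(n+1)\phi'\phi^n$ to harvest the $1/(n+1)$ factor.
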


\begin{proof} In \cite{MinaDiaz3}, fine asymptotic formulas were derived for the Faber polynomials $F_n$ associated to a domain $\Omega$ whose boundary $L$ is  a piecewise analytic curve without inner cusps ($L$ not necessarily a Jordan curve). This is a  more general assumption than the one we make in Lemma \ref{Lemma-on-Gn}. Let $\lambda\pi\in (0,2\pi)$ be the smallest of the exterior angles formed at the corners of $L$. Since $\cj{\caliE}_\epsilon$ is a closed subset of $\cj{\Omega}\setminus C(L)$, Theorem 2.4 of \cite{MinaDiaz3}  implies that there exists an open set $U\supset \cj{\caliE}_\epsilon$ such that $\phi$ extends as a univalent function to $U$, and 
	\[
	F_{n+1}(z)=\phi(z)^{n+1} +\Ordo(n^{-\lambda}),\qquad z\in U,\quad n\geq 1,
	\]
uniformly on closed subsets of $U$. By differentiating this relation,  we find that  
\[
\frac{G_n(z)}{\phi'(z)\phi(z)^n}=\frac{F'_{n+1}(z)}{(n+1)\phi'(z)\phi(z)^n}=1+\Ordo(n^{-\lambda-1}|\phi(z)|^{-n}),\qquad n\geq 1,
\]
uniformly on closed subsets of $U$, and the lemma follows since $|\phi(z)|> 1$ for $z\in \caliE_\epsilon$. 
\end{proof}

\section{Proof of the asymptotic Results}
\label{s:proofs}

\subsection{Proof of Proposition \ref{PROP:BECKSTYL}}
\noindent The inequality \eqref{second-ineq-A_n} is essentially contained in the proof of 
Theorem 1.1 of \cite{BeckermannStylianopoulos},
but the authors restrict attention to the exterior asymptotics away from
the boundary. To obtain Proposition~\ref{PROP:BECKSTYL} we 
only need to go through their proof,
paying attention to precisely how their bounds degenerate 
as we approach $\partial \Omega$.

In terms of the polynomials $G_n$ introduced in \eqref{secondkind-Faber}, the function $A_n(z)$ can be expressed as
	\begin{align*}
		A_n(z)&=\left(\frac{G_n(z)}{\phi'(z)\phi(z)^n}-1\right)
		+\frac{p_n(z)-\sqrt{n+1}G_n(z)}{\sqrt{n+1}\phi'(z)\phi(z)^n}\\
		&=:A_n^1(z)+A_n^2(z).
	\end{align*}
	This is the same as \cite[Eq.\ (4.1)]{BeckermannStylianopoulos}, if we take into account that we are using a different normalization of the area measure, and that 
	the functions $f_n$ in  \cite{BeckermannStylianopoulos} are precisely defined as 
	\[
	f_n(z):=\frac{F_{n+1}'(z)}{\sqrt{\pi(n+1)}}=\sqrt{\frac{n+1}{\pi}}G_n(z).
	\]
	
Let us fix $\epsilon>0$ and let $\caliE_\epsilon=\{z\in \Omega: {\rm d}_{\C}(z,C(L))>\epsilon\}$.  By Lemma \ref{Lemma-on-Gn}, there exists a constant $m_\epsilon$ such that 
\begin{align}\label{Gn-first-ineq}
|A_n^1(z)|\leq \frac{m_\epsilon}{n+1},\qquad z\in \caliE_\epsilon,\quad n\geq 0.
\end{align}
In particular, this implies that 
\begin{align}\label{Gn-second-ineq}
\left|\frac{G_n(z)}{\phi'(z)\phi(z)^n}\right|\leq 1+m_\epsilon, \qquad z\in \caliE_\epsilon,\quad n\geq 0.
\end{align}
In view of the inequality (2.14) of \cite{BeckermannStylianopoulos} and the one preceding (4.3) therein, we have the bound
	\begin{align}
		\label{eq:bd-An2-prel}
		|A_n^2(z)|\leq{} &
		c_2	\frac{\sqrt{\varepsilon_{n,n}}}{\sqrt{n+1}}\sum_{j=0}^n\sqrt{\varepsilon_{j,j}}\sqrt{j+1} |\phi(z)|^{j-n}\left|\frac{G_j(z)}{\phi'(z)\phi(z)^j}\right|,\qquad z\in \Omega,
	\end{align}
	where $c_2$ is a certain constant, and the numbers $\varepsilon_{n,n}$ are those previously introduced in \eqref{defvarepsilon-kn} ($\varepsilon_{n,n}$ is called $\varepsilon_n$ in \cite{BeckermannStylianopoulos} and defined by the alternative expression \eqref{vepnn-2}).
		
	Since $L$ is piecewise analytic without cusps, we have in view of Proposition \ref{propositionvepnn} (see also \cite[Eq. (1.12)]{BeckermannStylianopoulos}) that $\varepsilon_{n,n}\leq c_1(n+1)^{-1}$ for some constant $c_1$ independent of $n$.	Incorporating this and \eqref{Gn-second-ineq} into \eqref{eq:bd-An2-prel}, we find 
	\begin{align*}
	|A_n^2(z)|\leq{} &\frac{
	c_1	c_2(1+m_\epsilon)}{n+1}\sum_{j=0}^n|\phi(z)|^{j-n},\qquad z\in \caliE_\epsilon,\quad n\geq 0,
\end{align*}
which together with \eqref{Gn-first-ineq} yields
	\begin{align*}
	|A_n^1(z)|+|A_n^2(z)|\leq{} & \frac{\left(m_\epsilon+c_1	c_2(1+m_\epsilon)\right)|\phi(z)|}{(n+1)(|\phi(z)|-1)}, \qquad z\in \caliE_\epsilon,\quad n\geq 0,
\end{align*}
completing the proof of Proposition \ref{PROP:BECKSTYL}.

\subsection{Proof of Theorem \ref{theorem:seriesexpansion}}\label{proof-thm-series-expansion}
\noindent The Bergman kernel of $A^2(D)$ is the function $K(z,\zeta)$ 
on $D\times D$ which is analytic in $z$ and conjugate-analytic in $\zeta$, with finite norm
\[
\int_D|K(z,\zeta)|^2\diffA(\zeta)<\infty,\qquad z\in D,
\]
and such that for any $f\in A^2(D)$, the reproducing property holds:
\[
f(z)=\int_Df(\zeta)K(z,\zeta)\,\diffA(\zeta).
\] 
This kernel can be expressed in terms of the conformal map $\vfi$ of $D$ onto $\D(0,1)$ by the formula 
\[
K(z,\zeta)=\frac{\varphi'(z)\cj{\varphi'(\zeta)}}{(1-\varphi(z)\cj{\varphi(\zeta)})^2}.
\]

It is well-known that the polynomials are dense in $A^2(D)$, and therefore 
\begin{align}\label{kerneloverconvergence}
\frac{\varphi'(z)\cj{\varphi'(\zeta)}}{(1-\varphi(z)\cj{\varphi(\zeta)})^2}
=\sum_{k=0}^\infty\cj{p_k(\zeta)}p_k(z),\qquad z,\zeta\in D.
\end{align}
Here it is important to know that this equality actually holds in a bigger domain.
Indeed, for $\zeta\in D$ fixed, we can easily see that $K(z,\zeta)$ is analytic in the variable $z$ 
in some open set that contains $\cj{D}$. Let $\tau(\zeta)<1$ be the smallest number such that 
$K(\cdot,\zeta)$ is analytic in the domain $\caliD_{1/\tau(\zeta)}$ (the sets $\caliD_r$ were 
previously defined in Subsection \ref{p_n-on-D}).
Then \eqref{kerneloverconvergence} holds for all $ \zeta\in D$ and $z\in \caliD_{1/\tau(\zeta)}$. 
Moreover, for $\zeta\in D$ fixed, the convergence is uniform for $z$ on compact subsets 
of $\caliD_{1/\tau(\zeta)}$. We also have 
\begin{align}\label{limisupidentity}
\limsup_{n\to\infty}|p_n(\zeta)|^{1/n}=\tau(\zeta)<1,\qquad \zeta\in D.
\end{align}
These stronger statements follow from the overconvergence results of Walsh, see
\cite[pp. 130-131]{Walsh} (see also \cite[p. 336]{PS}).

By the Hermitian symmetry of the kernel, for $z\in D$ fixed, we have uniform convergence in 
\eqref{kerneloverconvergence}  for $\zeta\in L$. We then multiply \eqref{kerneloverconvergence} by 
\[
\frac{\phi(\zeta)^{n+1}\varphi'(\zeta)\,\diff\zeta}{\cj{\varphi'(\zeta)}\,\varphi(\zeta)^2}
\]
and integrate with respect to $\zeta$ over $L=\partial D$ to get (bearing in mind that 
when $\zeta\in L$, we have $\cj{\varphi(\zeta)}=1/\varphi(\zeta)$)
\begin{align}\label{expansion}
\frac{\varphi'(z)}{2\pi i}\int_{L}\frac{\varphi'(\zeta)\phi(\zeta)^{n+1}}{(\varphi(\zeta)-\varphi(z))^2}\,\diff\zeta
={} & \sum_{k=0}^\infty \frac{p_k(z)}{2\pi i}
\int_{L}\frac{\cj{p_k(\zeta)}\phi(\zeta)^{n+1}\varphi'(\zeta)\,\diff\zeta}{\cj{\varphi'(\zeta)}\,\varphi(\zeta)^2},\qquad z\in D.
\end{align}
Now we note that 
\[
\cj{\diff\zeta}=-\frac{\varphi'(\zeta)\,\diff\zeta}{\cj{\varphi'(\zeta)}\,\varphi(\zeta)^2},
\]
meaning that for every function $f$ continuous on $L$,
\[
\cj{\int_Lf(\zeta)\,\diff\zeta}=\int_{L}\cj{f(\zeta)}
\underbrace{\left(-\frac{\varphi'(\zeta)\,\diff\zeta}{\cj{\varphi'(\zeta)}\,
\varphi(\zeta)^2}\right)}_{{\displaystyle ``\, \cj{\diff\zeta}\,"}}.
\]
We then integrate by parts on the left-hand side of \eqref{expansion} to get 
\begin{align*}
\frac{\varphi'(z)(n+1)}{2\pi i}\int_{L}\frac{\phi'(\zeta)\phi(\zeta)^n}{\varphi(\zeta)-\varphi(z)}\,\diff\zeta
={} &	\sum_{k=0}^\infty\alpha_{n,k}p_k(z),\qquad z\in D,
\end{align*}
where
\[
\alpha_{n,k}= -\frac{1}{2\pi i}\int_{L}\cj{p_k(\zeta)}\phi(\zeta)^{n+1}\cj{\diff\zeta}
= -\frac{1}{2\pi i}\cj{\int_{L}p_k(\zeta)\phi(\zeta)^{-n-1}\diff\zeta}.
\]
The coefficients $\alpha_{n,k}$ are the same quantities previously introduced in 
\eqref{definitionalphas}. From \eqref{ineq:alpha_nn} and  \eqref{estimateforalpha_n},
we know that  
\begin{align}\label{estimatestogether}
\frac{1}{\alpha_{n,n}}\leq \frac{1}{\sqrt{n+1}},\qquad |\alpha_{n,k}|\leq \frac{C}{\sqrt{k+1}},\qquad k>n\geq 0,
\end{align}	
where $C$ is a constant independent of $k$ and $n$.
We have also seen in \eqref{alphavaluesfrom-one-to-n} that $\alpha_{n,k}=0$ for $0\leq k<n$, so that we can write \begin{align}\label{firstexpansion}
\alpha_{n,n}p_n(z)={}& Q_n(z)-\sum_{k=n+1}^\infty\alpha_{n,k}p_k(z),
\end{align}
with $Q_n(z)$ as defined in \eqref{definitionQ_n}.

The identity \eqref{firstexpansion} is similar to the identity (6.5) obtained in \cite{MinaDiaz1} 
for polynomials orthogonal with weights on the unit disk. The remaining part of the proof is 
essentially a repetition of the steps carried out in the proofs of Lemmas 6.1 and 6.2 of \cite{MinaDiaz1}, 
but we will quickly summarize them for the sake of keeping the presentation
self-contained.

The idea is to iterate \eqref{firstexpansion} to eliminate the polynomials $p_{k}$ for all $k\ge n+1$,
and end up with the right-hand side of \eqref{firstexpansion} written as a series in the $Q_n$'s. 
The  identity \eqref{firstexpansion} for $n+1$ in place of $n$ reads 
\begin{align}\label{firstexpansion1}
p_{n+1}(z)={}& \frac{1}{\alpha_{n+1,n+1}}Q_{n+1}(z)-\frac{1}{\alpha_{n+1,n+1}}\sum_{k=n+2}^\infty\alpha_{n+1,k}p_k(z).
\end{align}
If we now substitute the right-hand side of \eqref{firstexpansion1} for $p_{n+1}$ in the series of  \eqref{firstexpansion}, 
we get a new series where the only polynomials $p_k$ that remain are those of degree $k\geq n+2$. 
Repeating this process $m$ times, we obtain (cf. Lemma 6.1 of \cite{MinaDiaz1})
 \begin{align}\label{polysecondexpansion}
\alpha_{n,n}p_n(z)={} &\sum_{j=0}^mh(n,j)Q_{n+j}(z)+\sum_{k=n+m+1}^\infty g(n,m,k)p_k(z),
\end{align}
where the coefficients $h(n,j)$ and $g(n,m,k)$ are recursively defined by the relations 
\begin{align}\label{numbers-h-and-g-1}
h(n,0)=1,\quad g(n,0,k)=-\alpha_{n,k},\qquad k> n\geq 0,
\end{align}
\begin{align}\label{numbers-h-and-g-2}
h(n,m+1)=\frac{g(n,m,n+m+1)}{\alpha_{n+m+1,n+m+1}}\,,
\end{align}
and
\begin{align}\label{numbers-h-and-g-3}
g(n,m+1,k)= g(n,m,k)-h(n,m+1)\alpha_{n+m+1,k},\qquad k\geq n+m+2.
\end{align}
Combining \eqref{numbers-h-and-g-1}, \eqref{numbers-h-and-g-2}, and \eqref{numbers-h-and-g-3}  with \eqref{estimatestogether} we find 
\begin{align}\label{ineq1}
h(n,0)=1,\quad |g(n,0,k)|\leq  \frac{C}{\sqrt{k+1}},\qquad k\geq n+1,
\end{align}
\begin{align}
|h(n,m+1)|\leq {} &\frac{|g(n,m,n+m+1)|}{\sqrt{n+m+2}},\label{ineq2}
\end{align}
\begin{align}
|g(n,m+1,k)|\leq {} & |g(n,m,k)|+\frac{C}{\sqrt{k+1}}|h(n,m+1)|,\label{ineq3}
\end{align}
with the inequality \eqref{ineq3} being valid for $k\geq n+m+2$. 

By induction in the second variable $m$, we get from \eqref{ineq1}, \eqref{ineq2} 
and \eqref{ineq3} that (c.f. Lemma 6.2 of \cite{MinaDiaz1} and its proof)	
\begin{align}\label{bounds-for-hn-2}
|h(n,j)|\leq \frac{C}{n+j+1}\prod_{\ell=1}^{j-1}\left(1+\frac{C}{n+\ell}\right) ,\qquad j\geq 1,
\end{align}		
\begin{align}\label{inequalityforg}
|g(n,m,k)|\leq \frac{C}{\sqrt{k+1}}\prod_{\ell=1}^{m}\left(1+\frac{C}{n+\ell}\right),\qquad m\geq 0,\ k\geq n+m+1.
\end{align}

We now write
\begin{align}\label{estimate-for-product}
\prod_{\ell=1}^{m}\left(1+\frac{C}{n+\ell}\right)={} 
&\exp\left(\sum_{\ell=1}^{m}\log\left(1+\frac{C}{n+\ell}\right)\right)\nonumber\\
\leq {} &\exp\left(\sum_{\ell=1}^{m}\frac{C}{n+\ell}\right)=\left(1+\frac{m}{n+1
}\right)^C\exp(C\varrho_{n,m})\nonumber\\
<{} &e^C\left(1+\frac{m}{n+1}\right)^C,
\end{align}	
where 
\begin{align*}
\varrho_{n,m}=\left(\sum_{k=1}^{n+m}\frac{1}{k}-\log(n+m+1)\right)
-\left(\sum_{k=1}^{n}\frac{1}{k}-\log (n+1)\right)< 1. 
\end{align*}	
Replacing the product in \eqref{bounds-for-hn-2} by the estimate \eqref{estimate-for-product} 
(with $m=j-1$) readily yields \eqref{bounds-for-hn} with the choice of $B=Ce^C$. 
Similarly, applying \eqref{estimate-for-product} to \eqref{inequalityforg} we  obtain 
\begin{align}\label{inequalityforgV2}
|g(n,m,k)|\leq \frac{B}{\sqrt{k}}\left(1+\frac{m}{n}\right)^B ,\qquad m\geq 0,\ k\geq n+m+1.
\end{align}
	
To finish the proof of Lemma~\ref{polythirdexpansion} we just need to show that the second 
series in \eqref{polysecondexpansion} converges to zero as $m\to\infty$ for $z\in D$. 
Let us then think of $z\in D$ as being fixed. Since  \[
\tau(z)=\limsup_{k\to\infty}|p_k(z)|^{1/k}<1,
\] 
we can find $\epsilon>0$ and a constant $M_\epsilon$ such that $\tau(z)+\epsilon<1$ and 
$|p_k(z)|\leq M_{\epsilon} (\tau(z)+\epsilon)^k$ for all $k\geq 0$. 
This together with \eqref{inequalityforgV2} imply that 
\begin{align*}
\sum_{k=n+m+1}^\infty |g(n,m,k) & p_k(z)|\\
&\leq M_\epsilon B\left(1+\frac{m}{n}\right)^B \sum_{k=n+m+1}^\infty \frac{(\tau(z)+\epsilon)^k}{\sqrt{k}}\\
&\leq \frac{M_\epsilon B (\tau(z)+\epsilon)^{n+1}}{\sqrt{n+m+1}
(1-(\tau(z)+\epsilon))}\left(1+\frac{m}{n}\right)^B(\tau(z)+\epsilon)^m.
\end{align*}
In view of the convergence
\[
\lim_{m\to\infty}\left(1+\frac{m}{n}\right)^B(\tau(z)+\epsilon)^m=0,
\]
we conclude that for $z\in D$, we have 
\[
\lim_{m\to\infty}\sum_{k=n+m+1}^\infty g(n,m,k)p_k(z)=0,
\]
as desired.
	
\subsection{Proof of Theorem \ref{theorem:strongasymptotic}}
\noindent We begin by making the change of variable $\zeta=\psi(w)$ in the integral 
on the right-hand side of \eqref{definitionQ_n}. This yields 
\begin{align}\label{secondrepresentationforQ_n}
Q_n(z)={} &	\frac{(n+1)\varphi'(z)}{2\pi i}\int_{|w|=1}\frac{w^n}{f_z(w)}\,\diff w,\qquad z\in D.
\end{align}
We will use \eqref{polythirdexpansion} to derive the behavior of $p_n$ as $n\to \infty$ 
from that of $Q_n$. Broadly speaking, the dominant behavior of the integral in 
\eqref{secondrepresentationforQ_n} is found by computing the residues of its 
integrand at the zeros of $f_z(w)$ of largest modulus.

We recall that 	$z\in D_p$ means that among the zeros of the meromorphic function 
\[
f_{z}(w):=h(w)-\varphi(z)
\]
in $\mu<|w|<1$, those of largest modulus, say  $w_{z,1},\ldots, w_{z,s}$, 
have a total multiplicity of $p$. That is, if $m_{z,k}$ denotes the 
multiplicity of $h$ at $w_{z,k}$, then $\sum_{k=1}^s m_{z,k}=p$.

The following lemma is proven by a standard application of Rouché's theorem. 
It is practically a repetition of Lemma 11 of \cite{DragnevMinaDiaz1}. 
\begin{lemma}\label{descriptionD_p}
Let $z_0\in D_p$ be fixed, and let $\mu_0$ be chosen in the range $\mu<\mu_0<r(z_0)$ such that
the only zeros of $f_{z_0}(w)$ in the region $\mu_0\leq  |w|<1$ are precisely those of 
largest modulus $w_{z_0,1},\ldots, w_{z_0,s}$, $1\leq s\leq p$. 
Let $\delta>0$ be small enough that the closed disks $\cj{\D(w_{z_0,k},\delta)}$, 
$1\leq k\leq s$, are pairwise disjoint, contained in the annulus $\mu_0<|w|<1$, and such that 
$h'(w)\not=0$ whenever $0<|w-w_{z_0,k}|\leq \delta$ for some  $1\leq k\leq s.$
Then there exists $\epsilon >0$ such that $\cj{\D(z_0,\epsilon)}\subset D$ and 
for every $z\in \D(z_0,\epsilon)$, the zeros of  $f_{z}(w)$ that lie in $\mu_0<|w|<1$ 
are all simple and contained in $\cup_{k=1}^s\D(w_{z_0,k},\delta)$, with each disk 
$\D(w_{z_0,k},\delta)$ containing a number $m_{z_0,k}$ of them.
\end{lemma}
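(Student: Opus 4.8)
The plan is to view $f_z$ as a constant perturbation of $f_{z_0}$ and to apply Rouch\'e's theorem --- in its argument--principle form, so as to tolerate the poles of $h$ --- on a suitable system of circles. The starting point is the identity $f_z(w)-f_{z_0}(w)=\varphi(z_0)-\varphi(z)$, valid throughout the annulus $\mu<|w|<1/\mu$ on which $h$ is meromorphic. This has two consequences: $f_z$ and $f_{z_0}$ have exactly the same poles (namely the poles of $h$), and their difference is a constant that tends to $0$ as $z\to z_0$, by continuity of the conformal map $\varphi$ at $z_0\in D$.

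Next I would fix the contour. Since $h$ has only finitely many poles in $\mu\le|w|\le 1$ and none on $|w|=1$ (where $h$ maps the unit circle onto itself), I may choose $\mu_0$ within its allowed range $\mu<\mu_0<r(z_0)$ so that, in addition, $|w|=\mu_0$ carries no pole of $h$; and I take $\delta$ small enough that $h$ is analytic on each $\cj{\D(w_{z_0,k},\delta)}$, which is legitimate because $h(w_{z_0,k})=\varphi(z_0)$ is finite. Let $\Gamma$ be the union of the circles $\{|w|=\mu_0\}$, $\{|w|=1\}$ and $\{|w-w_{z_0,k}|=\delta\}$, $1\le k\le s$. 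By the hypothesis on $\mu_0$, the only zeros of $f_{z_0}$ in $\mu_0\le|w|\le 1$ are the $w_{z_0,k}$, which lie strictly inside the small disks; hence $f_{z_0}$ has no zero on the compact set $\Gamma$, and $m:=\min_{w\in\Gamma}|f_{z_0}(w)|>0$. Continuity of $\varphi$ then furnishes $\epsilon>0$ with $\cj{\D(z_0,\epsilon)}\subset D$ and $|\varphi(z)-\varphi(z_0)|<m$ for all $z\in\D(z_0,\epsilon)$, so that $|f_z(w)-f_{z_0}(w)|<|f_{z_0}(w)|$ on $\Gamma$.

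Finally I would run Rouch\'e on the regions cut out by $\Gamma$. On each disk $\D(w_{z_0,k},\delta)$, where $f_{z_0}$ is analytic with a single zero of order $m_{z_0,k}$, the function $f_z$ has exactly $m_{z_0,k}$ zeros counted with multiplicity. On the annulus with holes $\{\mu_0<|w|<1\}\setminus\bigcup_k\cj{\D(w_{z_0,k},\delta)}$, the meromorphic Rouch\'e theorem gives $Z(f_z)-P(f_z)=Z(f_{z_0})-P(f_{z_0})$ for the numbers of zeros and poles in that region; here $Z(f_{z_0})=0$ and the pole counts $P(f_z)$, $P(f_{z_0})$ coincide (both equal the number of poles of $h$ there, since $h$ is analytic on the removed disks), so $Z(f_z)=0$. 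This places all zeros of $f_z$ in $\mu_0<|w|<1$ inside $\bigcup_k\D(w_{z_0,k},\delta)$, with the asserted counts. For simplicity: if $w^*$ is such a zero then $w^*\in\D(w_{z_0,k},\delta)$ for some $k$; the case $w^*=w_{z_0,k}$ forces $\varphi(z)=h(w^*)=\varphi(z_0)$, hence $z=z_0$ by injectivity of $\varphi$, where the claim reduces to the standing hypothesis on $z_0$; and otherwise $0<|w^*-w_{z_0,k}|\le\delta$, so $f_z'(w^*)=h'(w^*)\ne 0$ by the choice of $\delta$ and $w^*$ is a simple zero.

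No single step is a genuine obstacle here --- which matches the text's remark that this is practically a repetition of Lemma~11 of \cite{DragnevMinaDiaz1}. The only points that require some care are arranging the contour $\Gamma$ to avoid the poles of $h$, and invoking the meromorphic (rather than the classical) form of Rouch\'e's theorem on the multiply connected region so that the common pole sets of $f_z$ and $f_{z_0}$ cancel out of the zero count.
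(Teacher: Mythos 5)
The paper does not actually give a proof of this lemma; it simply states that the result ``is proven by a standard application of Rouch\'e's theorem'' and refers to Lemma~11 of the cited Dragnev--Mi\~na-D\'iaz paper. Your argument is a correct and careful reconstruction in exactly that spirit, and the structure --- comparing $f_z$ to $f_{z_0}$ as a constant perturbation and running Rouch\'e on a system of circles, using the meromorphic (argument-principle) version so the common poles of $h$ cancel --- is the natural one. The end-of-proof observation that simplicity follows from $h'\neq 0$ on the punctured disks is the right way to convert the counting statement into simplicity.

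One point deserves a flag, though it does not invalidate the argument. You say ``I may choose $\mu_0$ \dots so that $|w|=\mu_0$ carries no pole of $h$,'' and similarly shrink $\delta$ so $h$ is analytic on the closed disks. As written, the lemma presents $\mu_0$ and $\delta$ as having been fixed in advance subject only to the stated conditions, and those conditions do not by themselves exclude a pole of $h$ on $|w|=\mu_0$ or in a punctured disk $0<|w-w_{z_0,k}|\le\delta$. This is harmless for the way the lemma is used in the paper (one only needs the existence of \emph{some} admissible $\mu_0,\delta,\epsilon$), but a cleaner route that works for \emph{any} admissible $\mu_0$ is to avoid Rouch\'e on the outer annulus altogether: apply Rouch\'e only on the small circles $|w-w_{z_0,k}|=\delta$ (shrinking $\delta$ there is genuinely free, since $h(w_{z_0,k})$ is finite), and on the compact remainder $\{\mu_0\le|w|\le 1\}\setminus\bigcup_k\D(w_{z_0,k},\delta)$ note that $\inf|f_{z_0}|>0$ once the finitely many poles of $h$ are deleted, since $|f_{z_0}|\to\infty$ at each pole; then $|\varphi(z)-\varphi(z_0)|$ small forces $f_z$ to be zero-free there. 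This sidesteps the pole-on-contour issue without touching $\mu_0$.

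Finally, note that at $z=z_0$ itself the conclusion ``all simple'' conflicts with $m_{z_0,k}>1$; your remark that the case $w^*=w_{z_0,k}$ forces $z=z_0$ and ``reduces to the standing hypothesis'' is the right reading, but strictly the lemma should be understood as asserting simplicity for $z\neq z_0$ (or, equivalently, only the multiplicity count at $z=z_0$).
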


It is easy to conclude from Lemma \ref{descriptionD_p} (c.f. Corollary 12 in \cite{MinaDiaz1}) 
that $D_1$ and $D\setminus D_0$ are open, that $\phi_1(z):=w_{z,1}$ is univalent on $D_1$, 
and that $r(z)$ is continuous on $D$. 

In order to prove Theorem \ref{theorem:strongasymptotic}, it suffices to show that every $z_0\in D_1$ 
has a neighborhood $\D(z_0,\epsilon)$ such that $A_n(z)=\Ordo(n^{-1})$ uniformly for $z\in \D(z_0,\epsilon)$ 
as $n\to\infty$. We then fix $z_0\in D_1$ and pick $\delta$ and $\epsilon$ as specified in 
Lemma \ref{descriptionD_p} with the choice $p=1$. In particular, we have 
\begin{align}\label{twobounds}
\mu_0<|\phi_1(z_0)|-\delta< |\phi_1(z)|< |\phi_1(z_0)|+\delta<1, \qquad z\in  \D(z_0,\epsilon).
\end{align}

For every $z \in \D(z_0,\epsilon)$, the function $f_z(w)$ has a single zero in $\mu_0\leq |w|\leq 1$, 
say $w_{z,1}=\phi_1(z)$, which is simple and contained in $\D(w_{z_0,1},\delta)$. 
It follows that the integrand in \eqref{secondrepresentationforQ_n} has a simple 
pole at $w_{z,1}$ and is analytic at any other point of the annulus $\mu_0\leq |w|\leq 1$.  
We can then apply the residue theorem to deduce that for all $z \in \D(z_0,\epsilon)$,
\begin{align}\label{secondrepresentationforQ_n1}
Q_n(z)=	(n+1)\varphi'(z)\frac{\phi_1(z)^n}{f_z'(\phi_1(z))}+
\frac{(n+1)\varphi'(z)}{2\pi i}\int_{|w|=\mu_0}\frac{w^n}{f_z(w)}\,\diff w.
\end{align}

The fact that $\phi_1(z)$ is a zero of $f_z(w)$ means that  $h(\phi_1(z))=\varphi(z)$. 
By differentiating this identity we obtain 
\begin{align}\label{derivativeidentity}
f_z'(\phi_1(z))= h'(\phi_1(z))=\frac{\varphi'(z)}{\phi_1'(z)},\qquad z\in \D(z_0,\epsilon).
\end{align} 

Since $|h(w)-\varphi(z)|:\T(0,\mu_0)\times \cj{\D(z_0,\epsilon)}\to (0,\infty]$ is 
continuous in $(w,z)$ and never vanishes, and since $|\vfi'|$ is bounded on $\cj{\D(z_0,\epsilon)}$, 
we deduce from \eqref{secondrepresentationforQ_n1} and \eqref{derivativeidentity} that   
\begin{align}\label{thirdrepresentationforQ_n}
Q_n(z)={} &(n+1)\phi_1'(z)\phi_1(z)^n+ \Ordo((n+1)\mu_0^n),\qquad z\in \D(z_0,\epsilon),
\end{align}
with the constant involved in the $\Ordo$-term being independent of $n$ and $z$. 
 We now combine \eqref{thirdrepresentationforQ_n} with \eqref{polythirdexpansion} 
and \eqref{estimateforgamma_n} to find
\begin{align}\label{fourthrepresentationforQ_n}
\begin{split}
A_n(z)={} &\frac{1+\Ordo(1/n)}{n+1}\sum_{j=1}^\infty h(n,j)(n+j+1)\phi_1(z)^{j}\\
&+\Ordo\left(\tau_0^n\sum_{j=0}^\infty \frac{h(n,j)(n+j+1)}{n+1}\mu_0^{j}\right)+\Ordo(1/n)
\end{split}
\end{align}
uniformly for $z\in \D(z_0,\epsilon)$ as $n\to\infty$, where $\tau_0=\mu_0/(|\phi_1(z_0)|-\delta)<1$.

By \eqref{bounds-for-hn}, we have that if $|\zeta|<1$, then 
\begin{align*}
\sum_{j=1}^\infty|h(n,j)|(n+j+1)|\zeta|^{j}	\leq {} & 
B\sum_{j=1}^\infty\left(1+\frac{j-1}{n}\right)^B|\zeta|^{j}\\
\leq {} & B2^{B}\left(\sum_{j=1}^\infty|\zeta|^{j}+
\sum_{j=1}^\infty\left(\frac{j-1}{n}\right)^B|\zeta|^{j}\right)\\
\leq {} & \frac{B2^{B}}{1-|\zeta|}+\frac{B2^{B}}{n^{B}}\sum_{k=0}^\infty k^B|\zeta|^{k}.
\end{align*}
By increasing $B$ if necessary, we may assume that $B$ is a positive integer. Making use of the identity 
\begin{align*}
\sum_{k=0}^\infty (k+1)\cdots (k+B)\zeta^j=\left(\frac{\zeta^B}{1-\zeta}\right)^{(B)},
\end{align*}	
one easily verifies that 
\begin{align*}
\sum_{k=0}^\infty (k+1)\cdots (k+B)|\zeta|^k\leq \frac{E}{(1-|\zeta|)^{B+1}}, \qquad |\zeta|<1,
\end{align*}
for some constant $E$ that only depends on $B$. Therefore, 
\begin{align}\label{estimado2}
\begin{split}
\sum_{j=1}^\infty	\frac{|h(n,j)|(n+j+1)}{n+1}|\zeta|^{j}
\leq  \frac{E_1}{n(1-|\zeta|)}+\frac{E_1}{n^{B_1}(1-|\zeta|)^{B_1}},
\end{split}
\end{align}
with $E_1=B2^BE$ and $B_1=B+1$.
Since $\mu_0<1$ and $|\phi_1(z)|<1$ for $\zeta\in D_1$, we can use \eqref{twobounds} 
and \eqref{estimado2} to estimate the  right-hand side of \eqref{fourthrepresentationforQ_n} 
and conclude that $A_n(z)=\Ordo(n^{-1})$ uniformly for $z\in  \D(z_0,\epsilon)$ as $n\to\infty$.

It remains to prove \eqref{limsup-equality}. Fix $z\in D$. By the very definition of $r(z)$ in \eqref{eq5}, 
the function $1/f_z(w)$ is analytic in the annulus $r(z)<|w|<1$, with a singularity on 
the circle $|w|=r(z)$ in case $r(z)>0$. We then look at its Laurent expansion 
\begin{align*}
\frac{1}{f_z(w)}=	\sum_{k=-\infty}^\infty a_k(z)w^k, \qquad r(z)<|w|<1,
\end{align*}
whose coefficients 
\begin{align*}
a_{-n}(z)=\frac{1}{2\pi i}\int_{|w|=1}\frac{w^{n-1}}{f_z(w)}\,\diff w,\qquad n\geq 0, 
\end{align*}
satisfy  
\[
\limsup_{n\to\infty}|a_{-n}(z)|^{1/n}=r(z).
\]
By \eqref{secondrepresentationforQ_n}, we have 
\begin{align*}
\limsup_{n\to\infty}|Q_n(z)|^{1/n}=r(z).
\end{align*}
Let  $\tau(z)=\limsup_{n\to\infty}|p_n(z)|^{1/n}<1$ (recall \eqref{limisupidentity}).
We need to show that $\tau(z)=r(z)$, or equivalently, that for every $\epsilon>0$,
\begin{align}\label{ineq:limisup}
r(z)-\epsilon\leq \tau(z)\leq 	r(z)+\epsilon.
\end{align}
Let $N_\epsilon$ be an index such that 
\begin{align*}
|Q_n(z)|^{1/n}\leq r(z)+\epsilon<1,\qquad |p_n(z)|^{1/n}< \tau(z)+\epsilon<1,
\end{align*}
for all $n>N_\epsilon$. 
Combining \eqref{polythirdexpansion}, \eqref{ineq:alpha_nn}, 
and \eqref{estimado2}, we find that for all $n >N_\epsilon$,
\begin{align*}
|p_n(z)|\leq{} &\frac{1}{\sqrt{n+1}}\sum_{j=0}^\infty |h(n,j)| |Q_{n+j}(z)|\\
\leq {} &\frac{(r(z)+\epsilon)^n}{\sqrt{n+1}}\left(1+\Ordo(1/n)\right),
\end{align*}
which  yields the second inequality in \eqref{ineq:limisup}. 
Similarly, we use \eqref{firstexpansion}, \eqref{estimatestogether}, 
and \eqref{estimateforgamma_n} to find that for all $n >N_\epsilon$,
\begin{align*}
|Q_n(z)|\leq {} &	\alpha_{n,n}|p_n(z)|+\sum_{k=n+1}^\infty|\alpha_{n,k}||p_k(z)|\\
\leq {} & \sqrt{n+1}(\tau(z)+\epsilon)^n\left(1+\Ordo(1/n)\right),
\end{align*}
which  yields the first inequality in \eqref{ineq:limisup}.

\subsection{Interior asymptotic estimates near \texorpdfstring{$L$}{}}
\label{section:asymptotics-near-boundary}
\noindent Let us denote the corners of $L$ by $z_1,\ldots,z_q$. Since $\phi$ takes $L$ onto the unit circle, 
we can write $\phi(z_k)=e^{i\theta_k}$, with $\theta_k \in [0,2\pi)$, $1\leq k\leq q$. 
For $\epsilon>0$ small enough to ensure that, for all $1\leq k\leq q$,
\begin{align}\label{angularcondition}
\{e^{i\theta}:\theta_k-\epsilon\leq\theta\leq\theta_k+\epsilon\}
\cap\{e^{i\theta_1},\ldots,e^{i\theta_q}\}=\{e^{i\theta_k}\},
\end{align}
and for every $0<r<1$, we define
\begin{align}\label{def:A}
V(r,\epsilon):={} & \bigcap_{k=1}^q\left\{te^{i\theta}:r< t< \frac{1}{r},\ 
\theta_k+\epsilon<\theta<\theta_k-\epsilon+2\pi\right\},
\end{align}
\begin{align}\label{def:V}
V_-(r,\epsilon):=V(r,\epsilon)\cap \D(0,1),\qquad V_+(r,\epsilon):=V(r,\epsilon)\cap \Delta(0,1).
\end{align}

\begin{figure}
	\begin{center}
		\scalebox{1}{\begin{tikzpicture}
				\draw (-4, 0) node[inner sep=0]{\includegraphics[scale=.8]{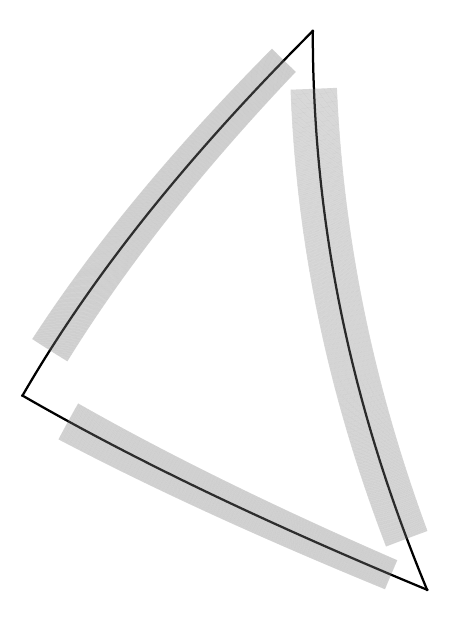}};
				\draw (3.5, 0) node[inner sep=0]{\includegraphics[scale=.9]{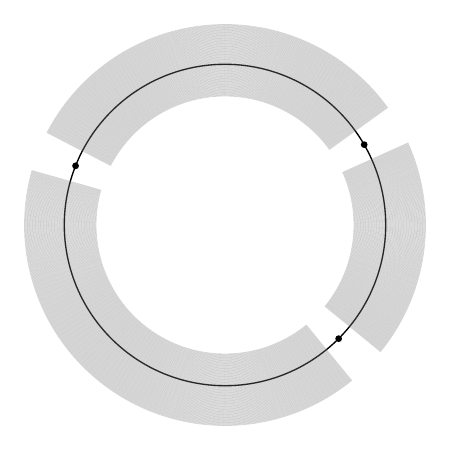}}; 
				\draw (-2.65, 4) node {$z_1$};
				\draw (-7.05, -1.2) node {$z_2$};
				\draw (-.97, -3.9) node {$z_3$};
				\draw (.93,1.05) node {$e^{i\theta_2}$};
				\draw (6.05,1.45) node {$e^{i\theta_1}$};
				\draw (5.51,-1.93) node {$e^{i\theta_3}$};
					\draw (-6.5,2.7) node {$U(r,\epsilon)$};
						\draw (5.5,3.3) node {$V(r,\epsilon)$};
				\draw[-Stealth]        (4.8,3.3)   -- (3.8,2.7);
				\draw[-Stealth]        (-5.75,2.7)   -- (-4.68,2);
		\end{tikzpicture}}
	\end{center}
\caption{Illustration of the open sets $V(r,\epsilon)$ and $U(r,\epsilon)$.}
\label{neighUandV}
\end{figure}

\begin{figure}
	\begin{center}
		\scalebox{1}{\begin{tikzpicture}
				\draw (-4, 0) node[inner sep=0]{\includegraphics[scale=.8]{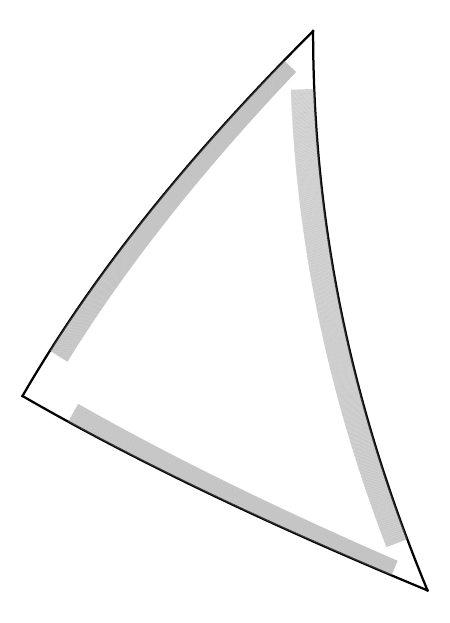}};
				\draw (3.5, 0) node[inner sep=0]{\includegraphics[scale=.9]{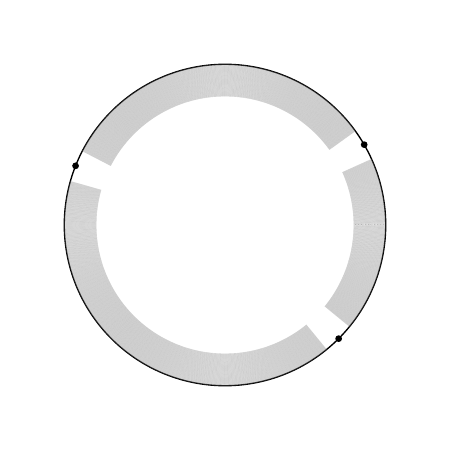}}; 
				\draw (-2.65, 4) node {$z_1$};
				\draw (-7.05, -1.2) node {$z_2$};
				\draw (-.97, -3.9) node {$z_3$};
				\draw (.93,1.05) node {$e^{i\theta_2}$};
				\draw (6.05,1.45) node {$e^{i\theta_1}$};
				\draw (5.51,-1.93) node {$e^{i\theta_3}$};
				\draw (-6.5,2) node {$U_-(r,\epsilon)$};
				\draw (5.57,2.8) node {$V_-(r,\epsilon)$};
				\draw[-Stealth]        (4.8,2.8)   -- (3.8,2.15);
				\draw[-Stealth]        (-5.7,2)   -- (-4.26,2);
		\end{tikzpicture}}
	\end{center}
\caption{Illustration of the open sets $V_-(r,\epsilon)$ and $U_{-}(r,\epsilon)$.}	\label{neighUandVminus}
\end{figure}

Note that $V(r,\epsilon)$ is its own reflection about the unit circle. Because $L$ is piecewise analytic, the reflection principle (see e.g. \cite[Ch. 8]{Davis}) allows us to find $0<r_\epsilon<1$ 
such that the map $\psi$ has a conformal extension to $\Delta(0,1)\cup V(r_\epsilon,\epsilon)$, 
which we also denote by $\psi$. Then for all $ r_\epsilon\leq r<1$, we set
\begin{align}\label{def:U}
U(r,\epsilon):=\psi(V(r,\epsilon)),\qquad U_\pm(r,\epsilon):=\psi(V_\pm(r,\epsilon)),
\end{align}
see Figures~\ref{neighUandV} and \ref{neighUandVminus}. 
The inverse of $\psi$ is the conformal extension of $\phi$ to 
$\Omega\cup U(r_\epsilon,\epsilon)$, which we also denote by $\phi$.  

\begin{proposition}\label{prop-asympt-near-boundary} 
For every $\epsilon>0$ satisfying \eqref{angularcondition}, there exists $r_\epsilon<r_0<1$ such that:
\begin{enumerate}[leftmargin=.9cm]
\item $U_-(r_0,\epsilon)\subset D_1$ and $\phi_1(z)=\phi(z)$ for all $z\in U_-(r_0,\epsilon)$. 
\item There exists positive constants $M_1>0$ and $B_1>1$, 
with $B_1$ independent of $\epsilon$ and $r_0$, such that
\begin{align}\label{growth-at-boundary}
\left|A_n(z)\right|\leq {} &\frac{M_1}{n(1-|\phi_1(z)|)}+\frac{M_1}{n^{B_1}(1-|\phi_1(z)|)^{B_1}}
\end{align}
for all $z\in U_-(r_0,\epsilon)$ and $n\geq 1$.  
\end{enumerate}
\end{proposition}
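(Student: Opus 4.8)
\textbf{Proof strategy for Proposition \ref{prop-asympt-near-boundary}.}
The plan is to re-run the residue computation from the proof of Theorem~\ref{theorem:strongasymptotic}, but now simultaneously over all points $z$ in the one-sided neighbourhood $U_-(r_0,\epsilon)$, keeping careful track of how the estimates degrade as $z\to L$. First I would establish part (1): for $z\in U_-(r_0,\epsilon)$ the point $w=\phi(z)$ lies in $V_-(r_0,\epsilon)$, and since $h(\phi(z))=\vfi(\psi(\phi(z)))=\vfi(z)$ on the overlap and $h$ extends meromorphically to $\mu<|w|<1/\mu$, the function $f_z(w)=h(w)-\vfi(z)$ has a simple zero at $w=\phi(z)$ (simplicity comes from $h'(\phi(z))=\vfi'(z)/\phi'(z)\ne 0$, exactly as in \eqref{derivativeidentity}). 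One must then choose $r_0$ close enough to $1$ so that $\phi(z)$ is the \emph{unique} zero of $f_z$ of modulus $\ge r_0$; this is where a Rouch\'e/compactness argument is needed, using that $h$ is meromorphic on a fixed annulus slightly larger than $\{|w|=1\}$ away from the finitely many excluded angular sectors, so the other zeros of $f_z$ stay bounded away from the circle $|w|=1$ uniformly in $z$. This gives $z\in D_1$ and $\phi_1(z)=\phi(z)$ throughout $U_-(r_0,\epsilon)$.

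For part (2), with $\mu_0=r_0$ fixed I would apply the residue theorem to \eqref{secondrepresentationforQ_n} exactly as in \eqref{secondrepresentationforQ_n1}, obtaining
\begin{align*}
Q_n(z)=(n+1)\phi_1'(z)\phi_1(z)^n+\frac{(n+1)\vfi'(z)}{2\pi i}\int_{|w|=r_0}\frac{w^n}{f_z(w)}\,\diff w,
\end{align*}
where the remainder integral is $\Ordo((n+1)r_0^n)$ uniformly for $z\in U_-(r_0,\epsilon)$, since $|f_z(w)|$ is bounded below on $|w|=r_0$ by compactness. Plugging this into the series representation \eqref{polythirdexpansion} together with \eqref{estimateforgamma_n} yields, just as in \eqref{fourthrepresentationforQ_n}, an expression for $A_n(z)$ as a main sum $\frac{1+\Ordo(1/n)}{n+1}\sum_{j\ge 1}h(n,j)(n+j+1)\phi_1(z)^j$ plus exponentially small terms plus $\Ordo(1/n)$. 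The crucial point is now that $|\phi_1(z)|=|\phi(z)|$ is \emph{not} bounded away from $1$: as $z\to L$ we have $|\phi_1(z)|\to 1$. So I invoke the key estimate \eqref{estimado2} with $\zeta=\phi_1(z)$: the main sum is bounded by $\frac{E_1}{n(1-|\phi_1(z)|)}+\frac{E_1}{n^{B_1}(1-|\phi_1(z)|)^{B_1}}$ with $B_1=B+1$. The exponentially small terms and the $\Ordo(1/n)$ are absorbed into the first fraction (possibly enlarging $M_1$), because $1-|\phi_1(z)|<1$. This gives \eqref{growth-at-boundary}, with $B_1$ depending only on the constant $B$ from Theorem~\ref{theorem:seriesexpansion} and hence independent of $\epsilon$ and $r_0$.

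\textbf{Main obstacle.} The delicate part is \emph{uniformity in $z$ up to the boundary} in the Rouch\'e step of part (1), and in the lower bound for $|f_z(w)|$ on $|w|=r_0$. One must use that the extension of $h$ to the annulus $\mu<|w|<1/\mu$ minus the excluded angular neighbourhoods of $e^{i\theta_k}$ is holomorphic (no poles there, since $\vfi$ has no poles — it maps into the disk), so $|h(w)-\vfi(z)|$ is jointly continuous and strictly positive on the compact set $\{|w|=r_0\}\times\overline{U_-(r_0,\epsilon)}$ provided $r_0$ is chosen so that no zero of $f_z$ for $z$ in this region has modulus $\ge r_0$ other than $\phi(z)$ itself. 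Verifying that such an $r_0$ exists requires noting that the zeros of $f_z$ vary continuously with $z$ and that, for $z$ on the analytic arcs of $L$ (the limiting case), the only zero on $\{|w|\ge r\}$ for $r$ near $1$ is the simple one at $\phi(z)\in\T(0,1)$ — this is precisely the reflection-invariance hypothesis $D\in\caliA_1$ at work, guaranteeing $h$ has no other zeros accumulating on the relevant part of the unit circle. Once this geometric input is secured, the rest is a routine repackaging of the computations already carried out for Theorem~\ref{theorem:strongasymptotic}.
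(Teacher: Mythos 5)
Your strategy is essentially the one the paper follows: establish part (1) by arguing that $\phi(z)$ is the unique zero of $f_z(w)$ of largest modulus in a uniform annulus (via a compactness argument resting on injectivity of $h$ on $\T(0,1)$ and univalence of $h$ on a neighbourhood of the circle, both coming from the analytic continuation of $\vfi$ supplied by the $\caliA_1$ hypothesis), then re-run the residue computation of Theorem~\ref{theorem:strongasymptotic} uniformly over $U_-(r_0,\epsilon)$ and feed the result into \eqref{polythirdexpansion} with \eqref{estimado2}. This matches the paper's proof. Two technical points, however, deserve flagging.

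First, placing the contour at $|w|=\mu_0=r_0$ does not work as written. For $z\in U_-(r_0,\epsilon)$, the zero $\phi(z)$ of $f_z$ satisfies $|\phi(z)|>r_0$ but can be arbitrarily close to $r_0$, by the very definition of $V_-(r_0,\epsilon)$. Therefore $|f_z(w)|$ is \emph{not} bounded below on $\{|w|=r_0\}\times U_-(r_0,\epsilon)$; the infimum is zero, and on the closure $\cj{U_-(r_0,\epsilon)}$ the zero actually lands on the contour. Your claimed uniform $\Ordo((n+1)r_0^n)$ estimate therefore fails at this point. The paper avoids this by integrating over $|w|=2r_0-1<r_0$, which keeps $\phi(z)$ (of modulus $\ge r_0$) uniformly separated from the contour over the compact set $\cj{U_-(r_0,\epsilon)}$, so that the lower bound $\beta_1>0$ in \eqref{constant-d} is legitimate. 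Second, the parenthetical "no poles there, since $\vfi$ has no poles" does not justify holomorphy of $h$ on the inner side $|w|<1$: there one has $h(w)=1/\cj{\vfi(\psi(1/\cj{w}))}$, whose singularities come from \emph{zeros} of $\vfi$, not poles. The correct observation, used in the paper, is that $\vfi$ is zero-free near $L$ (since $|\vfi|=1$ on $L$), so one may choose $\rho$ close enough to $1$ that $\vfi$ has no zeros inside $L_{1/\rho}$. Both issues are repairable with small adjustments, and once fixed, your proof coincides with the paper's.
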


\begin{proof} 
Recall that  the function $h(w)$ is the meromorphic extension to the annulus $\mu<|w|<1/\mu$ of 
the homeomorphism of the unit disk $\vfi\circ \psi:\T(0,1)\to \T(0,1)$. Since $\varphi$ is analytic 
on $\cj{D}$, we can choose $0<\rho <1$ sufficiently close to $1$ so that $\varphi$ extends analytically 
to the interior domain  of the level curve $L_{1/\rho}=\{z\in \C:|\phi(z)|=1/\rho\}$ without ever 
attaining the value $0$ in said domain. Then  
\[
h(w)=\varphi(\psi(w)),\qquad  1\leq |w|<1/\rho,
\]
and by the reflection principle, $h(w)$ is also analytic in  $\rho<|z|<1/\rho$. 
Let us fix $\epsilon>0$ satisfying \eqref{angularcondition}, and let $\epsilon':=\epsilon/2$. 
We choose $r_{\epsilon'}<r'<1$ such that $r'>\rho$. 
By the reflection principle, the  map $\vfi$ is conformal on $D\cup U(r',\epsilon')$, 
so that the composite $\vfi(\psi(w))$ (which then coincides with $h(w)$) is univalent on $V(r',\epsilon')$. 

We now pick  $r_0$ such that $r'<2r_0-1<1$, and observe that for any such choice, 
we have $\cj{U(r_0,\epsilon)}\subset U(r',\epsilon')$, 
and since $\phi$ and $\varphi$ are conformal on $U(r',\epsilon')$, then 
\begin{align}\label{constant-c}
\beta_0:=	\max\left\{|\vfi'(z)|/|\phi'(z)|: z\in \cj{U_-(r_0,\epsilon)}\right\}<\infty.
\end{align} 
We claim that if $r_0$ is sufficiently close to $1$, then  for all $z\in \cj{U_-(r_0,\epsilon)}$, 
the equation $h(w)-\varphi(z)=0$ has at most one solution in  $2r_0-1\leq |w|\leq 1$. 
For otherwise, we are able to find a sequence $r_n\nearrow 1$ and a corresponding sequence 
of points $z_n\in \cj{U_-(r_n,\epsilon)}$, such that $z_n=h(w_{n,1})=h(w_{n,2})$, 
where $w_{n,1}$ and $w_{n,2}$ are two distinct points of the annulus $2r_n-1\leq |w|\leq 1$. 

Since $h(w)=\vfi(\psi(w))$ on $V(r',\epsilon')$, and since  $\cj{U_-(r_n,\epsilon)}\subset U(r',\epsilon')$, 
we may assume that one of these solutions, say $w_{n,1}$, is $\phi(z_n)$, 
which lies in $ \cj{V_-(r_n,\epsilon)}$. Therefore, there exists $n_0$ such that 
\[
w_{n,1}\in\cj{ V_-((1+r')/2,\epsilon)}\subset V(r',\epsilon'),\qquad n>n_0.
\]
Note that this $\in$ relation is also satisfied by any limit point of the sequence $(w_{n,1})$.

By taking subsequences if necessary, we can assume that $w_{n,k}\to w_{k}$, $k=1,2$, as $n\to\infty$. 
By continuity, $h(w_1)=h(w_2)$, and since $|w_1|=|w_2|=1$ and $h$ is injective on $|w|=1$, 
this can only happen if $w_1=w_2$. Because $V(r',\epsilon')$ is an open set containing $w_1$, 
the points $w_{n,1}$ and $w_{n,2}$ belong to $V(r',\epsilon')$ for all $n$ sufficiently large, 
which in view of the equality $h(w_{n,1})=h(w_{n,2})$ contradicts the 
fact that $h$ is univalent on $V(r',\epsilon')$.

We have just proved the existence of a number $r_0$ in the range $(1+r')/2<r_0<1$ such that 
for all $z\in \cj{U_-(r_0,\epsilon)}$, the function $f_z(w)=h(w)-\varphi(z)$ has exactly 
one zero in  $2r_0-1\leq |w|\leq 1$, which is given by $\phi(z)\in \cj{V_-(r_0,\epsilon)}$. 
In particular, $|\phi(z)|\geq r_0$, and so    
\begin{align}\label{constant-d}
\beta_1:=\min\left\{|h(w)-\varphi(z)|:|w|=2r_0-1,\ z\in \cj{U_-(r_0,\epsilon)}\right\}>0.
\end{align}
Note also that for all $z\in U_-(r_0,\epsilon)$, $\phi(z)$ is a simple zero of $f_z(w)$ 
because $h(w)$ is conformal on $V(r',\epsilon')\supset V_-(r_0,\epsilon)$. 
Thus, $U_-(r_0,\epsilon)\subset D_1$ and $\phi(z)=\phi_1(z)$ for $z\in U_-(r_0,\epsilon)$.

Moreover, just as in the proof of Theorem \ref{theorem:strongasymptotic}, 
we can use the residue theorem to get from   \eqref{secondrepresentationforQ_n} 
that for all $z\in U_-(r_0,\epsilon)$,
\begin{align*}
Q_n(z)=	{} &(n+1)\phi'(z)\phi(z)^n+	\frac{(n+1)\varphi'(z)}{2\pi i}\int_{|w|=2r_0-1}\frac{w^n}{f_z(w)}\,\diff w.
\end{align*}
Combining this with  \eqref{polythirdexpansion}, \eqref{ineq:alpha_nn}, 
\eqref{constant-c} and \eqref{constant-d}, we obtain the inequality
\begin{align}\label{bound-An}\begin{split}
|A_n(z)|\leq{} & \left|\frac{\sqrt{n+1}}{\alpha_{n,n}}-1\right|
+\sum_{j=1}^\infty\frac{|h(n,j)|(n+j+1)|}{n+1}|\phi(z)|^j\\
&+\beta_0\beta_1\tau_0^n\sum_{j=1}^\infty\frac{|h(n,j)|(n+j+1)|}{n+1}(2r_0-1)^{j+1},
\end{split}
\end{align}
which is valid for  $z\in U_-(r_0,\epsilon)$, where $\tau_0=(2r_0-1)/r_0<1$.

On  $U_-(r_0,\epsilon)$, we have $|\phi(z)|\geq r_0$, and 
by \eqref{estimateforgamma_n}, there exists a constant $\beta$ independent of $n$
such that  
\begin{align*}
\left|\frac{\sqrt{n+1}}{\alpha_{n,n}}-1\right|	\leq \frac{\beta_2}{n},\qquad n\geq 1.
\end{align*} 
This and \eqref{estimado2} allow us to deduce from \eqref{bound-An} that for  $z\in U_-(r_0,\epsilon)$ and some constant $\beta_3$,
\begin{align*}
|A_n(z)|\leq{} & \frac{\beta_2}{n}+\frac{E_1}{n(1-|\phi(z)|)}+\frac{E_1}{n^{B_1}(1-|\phi(z)|)^{B_1}}+\frac{\beta_3\tau_0^n}{n}\\
\leq {} & \frac{\beta_2+E_1+\beta_3}{n(1-|\phi(z)|)}+\frac{E_1}{n^{B_1}(1-|\phi(z)|)^{B_1}},
\end{align*}
 whence \eqref{growth-at-boundary}
follows.\end{proof}

\subsection{Proof of Lemma~\ref{maintheorem:strongasymptotic}}
\noindent  As in the previous subsection, we use $z_1,\ldots,z_q$ to denote the corners of $L$.  
We need to show that for every $z_0\in L\setminus\{z_1,\ldots,z_q\}$,
there exists a neighborhood $\caliU_{z_0}$ of $z_0$ 
and a constant $C_{z_0}$ such that $\caliU_{z_0}\subset \Omega^*$  and 
\begin{align}\label{ineq-hn}
\left|A_n(z)\right|\leq  \frac{C_{z_0}\log n}{n},\qquad z\in \caliU_{z_0}.
\end{align}
We use the notation introduced in  \eqref{def:A}, \eqref{def:V}, and \eqref{def:U}. 
Let $\epsilon>0$ be chosen so small that \eqref{angularcondition} is satisfied and 
\[
w_0:=\phi(z_0)\not \in \bigcup_{k=1}^q\{e^{i\theta}:\theta_k-\epsilon\leq\theta\leq\theta_k+\epsilon\}.
\]
By Proposition \ref{prop-asympt-near-boundary}, for such a $\epsilon$
we can find $0<r<1$ such that 
\begin{align}\label{growth-at-boundary-1}
\left|A_n(z)\right|\leq {} &\frac{M_1}{n(1-|\phi(z)|)}+\frac{M_1}{n^{B_1}(1-|\phi(z)|)^{B_1}},
\qquad z\in U_-(r,\epsilon),\quad n\geq 1,
\end{align}
where $M_1$ and $B_1$  are independent of $n$ and $z$. 
By increasing $r$ if necessary, we can guarantee that $\cj{U(r,\epsilon)}\subset \Omega^*$, so that 
\begin{align}\label{ineq:inf-phi-prime}
\inf\{|\phi'(z)|:z\in U(r,\epsilon)\}>0.
\end{align}

The estimate \eqref{second-ineq-A_n} holds on $U_+(r,\epsilon)$, and $|\phi(z)|<1/(1-r)$ for all 
$z\in U_+(r,\epsilon)$. In combination with  \eqref{growth-at-boundary-1}, this yields an inequality of the form
\begin{align}\label{growth-at-boundary-2}
\left|A_n(z)\right|\leq {} &\frac{M_2}{n(1-|\phi(z)|)}+
\frac{M_2}{n^{B_1}(1-|\phi(z)|)^{B_1}},
\end{align}
which is valid for $z\in U_-(r,\epsilon)\cup U_+(r,\epsilon)$ and $n\geq 1$, with $M_2$ some constant.

By \eqref{second-ineq-A_n}, on the level curve 
\[
L_{1/(1-r)}:=\{z\in \C:|\phi(z)|=1/(1-r)\},
\]
we have the bound 
\[
|p_n(z)|\leq  \frac{M_3\sqrt{n+1}}{(1-r)^n},\qquad z\in L_{1/(1-r)},\quad n\geq 0,
\]
where the constant $M_3$ only depends on $r$. By the maximum principle, the same bound holds inside $L_{1/(1-r)}$, and in particular, on the set 
$U(r,\epsilon)$. This and \eqref{ineq:inf-phi-prime} imply that for all $ n\geq 0$,
\begin{align}\label{global-bound}
|A_n(z)|=\left|\frac{p_n(z)}{\sqrt{n+1}\phi'(z)\phi(z)^n}-1\right|\leq \frac{M_4}{(1-r)^{2n}},\qquad z\in U(r,\epsilon),
\end{align}
where $M_4$ is some constant.

For  $\eta>0$, the M\"{o}bius transformation  
\[
J(\zeta)=w_0\frac{i-\eta\zeta}{i+\eta\zeta},
\]
maps the real line onto the unit circle, while it takes the lower-half plane $H_-:= \{\zeta:\I(\zeta)<0\}$ onto 
the unit disk $\D(0,1)$, and the upper half-plane $H_+:= \{\zeta:\I(\zeta)>0\}$ onto $\Delta(0,1)$. 
Since $J(0)=w_0$, by choosing $\eta$ sufficiently 
small we can guarantee that  $J(\Sigma_1)\subset V(r,\epsilon)$, 
where $\Sigma_1$ is the open unit square, that is, the set corresponding to $\delta=1$ in the definition 
\[
\Sigma_\delta:=\left\{\zeta\in\C:|\R(\zeta)|<\delta,\, |\I(\zeta)|< \delta\right\}.
\] 
The set $\psi(J(\Sigma_1))$ is open, contains $z_0$, 
and  we have the inclusions 
\[
\psi(J(\Sigma_1\cap H_+))\subset U_+(r,\epsilon),
\qquad\psi(J(\Sigma_1\cap H_-))\subset U_-(r,\epsilon).
\]
Combining \eqref{growth-at-boundary-2} and \eqref{global-bound} with the equality
\begin{align*}
	|1-|J(\zeta)||=\frac{4\eta|\I(\zeta)|}{|i+\eta \zeta|^2(1+|J(\zeta)|)},
\end{align*}
we find that the functions 
$$u_n(t):=A_n(\psi(J(\zeta))),\qquad \zeta\in \Sigma_1,$$ satisfy the following properties, where $M_5$ is a certain constant independent of the index $n$:
\begin{enumerate}[leftmargin=.9cm]
\item[(i)] $u_n(\zeta)$ is analytic in $\Sigma_1$; 
\item[(ii)] ${\displaystyle |u_n(\zeta)|\leq \frac{M_5}{n|\I(\zeta)|}+\frac{M_5}{n^{B_1}|\I(\zeta)|^{B_1}}  }$,
\quad  $\zeta\in \Sigma_1\setminus (-1,1)$; 
\item[(iii)] ${\displaystyle |u_n(\zeta)|\leq M_5a^{n}}$,\quad$a=(1-r)^{-2}$,\quad $\zeta\in \Sigma_1$.  
\end{enumerate}

If we prove that for every $0<\delta<1$, there is a constant 
$C_{\delta}$ such that
\begin{align}\label{ineq-fn}
\left|u_n(\zeta)\right|= C_{\delta}\frac{\log n}{n},
\qquad \zeta\in \Sigma_\delta,\quad n>1 ,
\end{align} 
then \eqref{ineq-hn} would hold with the choice of neighborhood 
$\caliU_{z_0}=\psi(J(\Sigma_{\delta}))$, $\delta<1$.
The assertion \eqref{ineq-fn} is an immediate consequence of the following proposition.

\begin{proposition}
Let $\{u_n\}_{n\ge 1}$ be a sequence of holomorphic 
functions on $\Sigma_1$, such that for some numbers $1=b_1\leq b_2\leq \cdots\leq b_m$, it holds that
\begin{equation}
\label{eq:conv-away-segment}
|u_n(z)|\leq\sum_{j=1}^m\frac{1}{n^{b_j}|\I(z)|^{b_j}},\qquad z\in \Sigma_1\setminus(-1,1).
\end{equation}
Assume, in addition, that for some real number $a\geq 1$, we have the uniform bound
\begin{equation}
\label{eq:uniform-bound}
|u_n(z)|\leq a^n,\qquad z\in \Sigma_1.
\end{equation}
Then for any $0<\delta<1$, there exists
a constant $C_{\delta}$ such that
\[
\left|u_n(z)\right|\leq\frac{C_{\delta}\log n}{n},
\qquad z\in \Sigma_{\delta},\quad n>1.
\]
In particular, $u_n(z)$ converges to $0$ locally 
uniformly on $\Sigma_1$.
\end{proposition}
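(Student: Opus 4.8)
The plan is a Phragm\'{e}n--Lindel\"{o}f argument: away from the segment $(-1,1)$ the estimate \eqref{eq:conv-away-segment} already gives a bound of the desired order, and near the segment one uses holomorphicity to upgrade the crude bound \eqref{eq:uniform-bound}. Fix $0<\delta<1$ and set $d:=\tfrac12(1-\delta)$, $\delta_1:=\tfrac12(1+\delta)$, so that $d$ is the horizontal gap between $\Sigma_\delta$ and the lines $\R z=\pm\delta_1$. Split $\Sigma_\delta$ into the ``outer'' part $\{z\in\Sigma_\delta:|\I z|\ge c/\log n\}$ and the ``inner'' strip $\{z\in\Sigma_\delta:|\I z|<c/\log n\}$, where we will take $c:=\pi d/2$. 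On the outer part the claim is immediate from \eqref{eq:conv-away-segment}: since $b_1=1$ and $n|\I z|\ge nc/\log n\ge 1$ for $n$ large, every term obeys $(n|\I z|)^{-b_j}\le (n|\I z|)^{-1}\le \tfrac1c\,\log n/n$, whence $|u_n(z)|\le \tfrac mc\,\log n/n$. (It is precisely the hypothesis $b_1=1$, rather than $b_1>1$, that forces the $\log n$ in the final bound.)

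For a point $z_0\in\Sigma_\delta$ in the inner strip, apply the maximum principle to the subharmonic function $\log|u_n|$ (taking the value $-\infty$ at zeros of $u_n$; the case $u_n\equiv 0$ is trivial) on the shrinking rectangle
\[
\mathcal{R}_n:=\{x+iy:\ |x|<\delta_1,\ |y|<H_n\},\qquad H_n:=\frac{\pi d}{2\log n},
\]
which contains $z_0$ (since $c=\pi d/2$ means $|\I z_0|<H_n$) and whose closure is a compact subset of $\Sigma_1$ for $n$ large. On the two horizontal sides of $\mathcal{R}_n$ one has $|\I\zeta|=H_n$, so \eqref{eq:conv-away-segment} yields $|u_n(\zeta)|\le K_n:=\sum_j (nH_n)^{-b_j}\le m(nH_n)^{-1}$, which is of size $\asymp \log n/n$; on the two short vertical sides we use only $|u_n(\zeta)|\le a^n$ from \eqref{eq:uniform-bound}. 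Writing $T$ and $S$ for the union of the horizontal resp.\ vertical sides of $\partial\mathcal{R}_n$, and $\omega_{z_0}$ for harmonic measure of $\mathcal{R}_n$ at $z_0$, the maximum principle gives
\[
\log|u_n(z_0)|\ \le\ \omega_{z_0}(T)\,\log K_n+\omega_{z_0}(S)\,n\log a .
\]

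The decisive input is the classical fact that the harmonic measure of the short sides of a long thin rectangle decays exponentially in the aspect ratio: since $z_0$ lies at horizontal distance $\ge d$ from $S$, one has $\omega_{z_0}(S)\le C\,e^{-\pi d/(2H_n)}$, which can be seen by conformally mapping a half-strip containing $\mathcal{R}_n$ onto $\{\,\R w>0,\ |w|>1\,\}$ in such a way that $z_0$ is sent to a point of modulus $\ge e^{\pi d/(2H_n)}$ and the relevant side to a bounded boundary arc, and then invoking domain monotonicity. With the calibration $H_n=\pi d/(2\log n)$ this reads $\omega_{z_0}(S)\le C/n$, hence $\omega_{z_0}(T)\ge 1-C/n$. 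Since $\log K_n<0$ for $n$ large and $|\log K_n|=\log n-\log\log n+\Ordo(1)$, substituting into the displayed inequality gives $\log|u_n(z_0)|\le -\log n+\log\log n+\Ordo(1)$, i.e.\ $|u_n(z_0)|\le C_\delta\,\log n/n$. Enlarging $C_\delta$ to absorb the finitely many small $n$ (where the uniform bound $|u_n|\le a^n$ on $\Sigma_1$ already suffices) completes the estimate on $\Sigma_\delta$; the asserted local uniform convergence to $0$ follows since every compact subset of $\Sigma_1$ lies in some $\Sigma_\delta$ with $\delta<1$.

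I expect the calibration in the last step to be the crux: $H_n$ must be small enough that $e^{-\pi d/(2H_n)}$ beats the factor $a^n$ contributed by \eqref{eq:uniform-bound} on the short sides (forcing $H_n\lesssim 1/\log n$), while this same smallness degrades the bound $K_n=m/(nH_n)$ on the long sides from $\asymp 1/n$ to $\asymp \log n/n$. The fixed horizontal margin $d=d(\delta)>0$ between $\Sigma_\delta$ and the short sides of $\mathcal{R}_n$ is exactly what makes the exponential gain available; the precise exponent constant, the role of the higher $b_j$, and the bookkeeping of constants are all routine.
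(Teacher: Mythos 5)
Your argument is correct, and it reaches the result by a genuinely different route from the paper. Where the paper rescales the thin band to a fixed-height, logarithmically growing rectangle $T_n$ and applies the maximum principle to the product of $g_n(\zeta)=u_n(x_0+\zeta(\sigma\log n)^{-1})$ with the explicit auxiliary function $h(\zeta)=\exp\bigl(-\sqrt2\,(e^{\pi\zeta/4}+e^{-\pi\zeta/4})\bigr)$ (so that $|h|$ kills the $a^n$ growth on the short sides while $|h|\le 1$ on the long sides), you instead work directly in the shrinking rectangle $\mathcal{R}_n$ and apply the two-constants theorem, i.e.\ the maximum principle for the subharmonic function $\log|u_n|$ together with the classical exponential decay $\omega_{z_0}(S)\lesssim e^{-\pi d/(2H_n)}$ of harmonic measure of the short sides of a thin rectangle. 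These are two standard incarnations of the same Phragm\'en--Lindel\"of idea: the paper's version is entirely self-contained (it only uses the maximum principle for holomorphic functions and an explicit entire barrier), whereas yours is more conceptual but requires quoting, or sketching via conformal mapping and domain monotonicity as you do, the quantitative harmonic-measure estimate for a half-strip. The decisive calibration is identical in both: the half-height must be taken of order $1/\log n$ so that the exponential suppression (whether by $|h|$ or by $\omega_{z_0}(S)$) beats $a^n$, at the cost of degrading the boundary bound on the long sides from $1/n$ to $\log n/n$. Your remark on the role of $b_1=1$ is a reasonable heuristic, though the $\log n$ loss is really forced by this trade-off rather than by the value of $b_1$; and your closing bookkeeping (absorbing small $n$ by enlarging $C_\delta$, using that $\mathcal{R}_n\Subset\Sigma_1$ for $n$ large, passing to compact subsets) is all sound.
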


\begin{proof}
Fix $0<\delta<1$, and put 
\begin{align}\label{def-mu}
\sigma:=\frac{4(1+\epsilon)}{\pi(1-\delta)}>1,
\end{align}
where $\epsilon>0$ is some arbitrary positive number.
For $n>1$, we 
denote by $S_n$ the thin rectangle
\[
S_n=\left\{z\in \Sigma_1: |\I(z)|\leq\frac{1}{\sigma \log n}\right\}.
\]
For $z\in \Sigma_1\setminus S_n$, we have $|\I(z)|\geq (\sigma \log n)^{-1}$, 
so the bound \eqref{eq:conv-away-segment}
gives 
\begin{align*}
|u_n(z)|\leq  \sum_{j=1}^m \frac{\sigma^j(\log n)^{b_j}}{n^{b_j}}
\leq  m\sigma^{b_m}\frac{\log n}{n},\qquad z\in \Sigma_1\setminus S_n.
\end{align*}
It remains to establish a similar bound on the set 
$\Sigma_{\delta} \cap S_n$.
For this, it is in turn sufficient to prove that 
for any fixed $x_0\in (-\delta,\delta)$,
we have
\[
\big|u_n(x_0+ i  y)\big|
\leq \frac{C_{\delta}\log n}{n},\qquad 
|y|\leq \frac{1}{\sigma\log n},
\]
where $C_{\delta}$ is a constant independent of $x_0$.

The proof will be based on rescaling 
\[
\{z\in S_n\,:\,|\R(z)-x_0|<\delta'\}, \qquad \delta':=1-\delta,
\]
to a rectangle of fixed (unit) height and growing width, 
and then apply a quantitative analogue of the 
Phragmén-Lindelöf principle in a strip.
To that end, we denote by $T_n$ the rescaled rectangle
\begin{align*}
T_n
&:=\left\{ (\sigma\log n) (z-x_0):z\in S_n,\ |\R(z)-x_0|\leq \delta'\right\}\\
&=\left\{\zeta\in\C: |\I(\zeta)|\leq  1,\ |\R(\zeta )|\leq \delta'\sigma \log n\right\},
\end{align*}
and consider the sequence $\{g_n\}_{n\ge 0}$ of holomorphic 
functions on $T_n$ defined by 
\[
g_n(\zeta):=u_n\big(x_0+\zeta(\sigma \log n)^{-1}\big),\qquad \zeta\in T_n.
\] 
On the horizontal parts of the boundary $\partial T_n$,
the bound \eqref{eq:conv-away-segment} implies that
\begin{align}
\label{eq:g-bd-boudnary}
\begin{split}
\left|g_n\left(x\pm  i  \right)\right|
={}&\left|u_n\left(x_0+\frac{x}{\sigma\log n}\pm \frac{i}{\sigma\log n}\right)\right|
\\
\leq{} & m\sigma^{b_m}\frac{\log n}{n},\qquad |x|\leq \delta'\sigma\log n.
\end{split}
\end{align}
In addition, the uniform bound \eqref{eq:uniform-bound} translates to
\begin{equation}
\label{eq:g-bd-boundary-4}
\left|g_n(\zeta)\right|\leq a^n,\qquad \zeta \in T_n.
\end{equation}

We next introduce the auxiliary function
\[
h(\zeta):=\exp\left(-\sqrt{2}\left(e^{\pi \zeta /4}+e^{-\pi \zeta /4}\right)\right).
\]
The function $h$ is entire and $|h(x+ i  y)|$ tends to zero rapidly as $|x|\to\infty$.
In analogy with standard proofs of the Phragmén-Lindelöf principle
in a strip, we will use $h$ to subdue any growth of 
$|g_n(\zeta)|$ in the horizontal direction,
allowing us to apply the maximum principle to $|g_n\,h|$ in $T_n$. 
More specifically, for $\zeta=x+ i  y\in T_n$, we have
\begin{align}
\begin{split}
\label{eq:h-bd}
|h(\zeta)|&=\exp\Big(-\sqrt{2}\cos(\pi y/4)
\big(e^{\pi x/4}+e^{-\pi x/4}\big)\Big)\\
&\leq \exp\Big(-
\exp\big(\pi |x|/4\big)\Big),
\end{split}
\end{align}
where the inequality follows from the fact that $\cos(\pi y/4)\ge \frac{1}{\sqrt{2}}$
for $|y|\leq 1$ together with the elementary bound $e^{t}+e^{-t}\ge e^{|t|}$.

We have $|h(\zeta)|\leq 1$ throughout the strip $\{\zeta:|\I(\zeta)|\leq 1\}$,
so \eqref{eq:g-bd-boudnary} implies that 
\begin{align}
\label{eq:bd-hg-horizontal}
\begin{split}
|h(\zeta)g_n(\zeta)|\leq{} m\sigma^{b_m}\frac{\log n}{n},
\qquad \zeta=x\pm  i  \in \partial T_n.
\end{split}
\end{align}
On the vertical parts of the boundary, \eqref{eq:h-bd} and \eqref{def-mu} instead give
\begin{align*}
\left|h\left(\pm \delta'\sigma \log n+ i  y\right)\right|
\leq {} &\exp\left(-
e^{\tfrac{\pi \delta'\sigma \log n}{4}}\right)\\
\leq {} &\exp\left(-
n^{1+\epsilon}\right),
\qquad |y|\leq 1.
\end{align*}
Combining this with the uniform bound \eqref{eq:g-bd-boundary-4}, 
we see that the product satisfies
\begin{equation}
\label{eq:gh-bd-vert}
\left|(g_n\cdot h)\left(\pm \delta'\sigma \log n+ i  y\right)\right|
\leq a^n\exp\left(-n^{1+\epsilon} \right),\qquad |y|\leq 1
\end{equation}
on the vertical parts of $\partial T_n$.
Since the right-hand side of \eqref{eq:gh-bd-vert} 
tends to $0$ faster than  $(\log n)/n$,
there exists some constant $C_{\delta}>m\sigma^{b_m}$ such that
\[
\left|(g_n\cdot h)\left(\pm \delta'\sigma \log n+ i  y\right)\right|
\leq \frac{C_{\delta}\log n}{n}, \qquad |y|\leq 1,\quad n\geq 1.
\]
When combined with \eqref{eq:bd-hg-horizontal},
\eqref{eq:gh-bd-vert} gives 
\[
\left|g_n(\zeta)h(\zeta)\right|
\leq  \frac{C_{\delta}\log n}{n},\qquad \zeta\in\partial T_n,
\]
which, by the standard maximum principle, also holds for all $ \zeta\in T_n$.
In other words, for $\zeta=x+ i  y\in T_n$, we have 
\begin{equation}
\label{eq:bound-g-final}
|g_n(\zeta)|\leq  \frac{C_{\delta}\log n}{n|h(\zeta)|}
= \frac{C_{\delta}\log n}{n}\exp\Big(\sqrt{2}\cos(\pi y/4)\,
\big(e^{\pi x/4}+e^{-\pi x/4}\big)\Big).
\end{equation}

Recall that we want to analyze the size of $u_n(z)$ on the line $\R (z)=x_0$,
which corresponds to evaluating $g_n(\zeta)$ along the imaginary axis.
Invoking the bound \eqref{eq:bound-g-final} for $\zeta= iy$ and $|y|\leq 1$, we obtain
\begin{align*}
\left|u_n\left(x_0+ i  y(\sigma \log n)^{-1}\right)\right|
={}&\big|g_n( i  y)\big|\leq  \frac{C_{\delta}\log n}{n}
\exp\left(2\sqrt{2}\cos\left(\pi y/4\right)\right)\\
\leq{} & \frac{C_\delta e^{2\sqrt 2}\log n}{n}.
\end{align*}
This completes the proof.
\end{proof}

\subsection{Proof of Theorem~\ref{thm:exteriorPhi}}
\noindent We know from \eqref{second-ineq-A_n} and \eqref{asymptoticformulaindideD} that $|A_n(z)|=\Ordo(1/n)$ 
uniformly as $n\to\infty$ on closed subsets of $\Omega\cup D_1$. Hence,
it is sufficient to show that the bound $A_n(z)=\Ordo(\log n/n)$ holds near any $z_0\in L$
away from the set of corners $C(L)$, which is precisely the assertion of Lemma~\ref{maintheorem:strongasymptotic}.

\subsection*{Acknowledgements}
\noindent The first author expresses his gratitude to Arno Kuijlaars for helping support his research stay at KU Leuven during the Spring of 2024. This stay was also partially supported by a 
grant from The University of Mississippi Office of Research and Sponsored Programs.\\

\noindent The research of the second author was supported by
the Flemish Research Foundation (FWO) 
Odysseus Grant (No.\ G0DDD23N).

\bigskip
\medskip

\smallskip\noindent{Erwin Mi\~{n}a-D\'{\i}az\newline
Department of Mathematics \newline
The University of Mississippi, \newline 
Hume Hall 305, P.~O.~Box 1848,\newline
MS 38677-1848, USA
\newline {\tt minadiaz@olemiss.edu}

\smallskip
\bigskip

\smallskip\noindent{Aron Wennman\newline
Department of Mathematics \newline
KU Leuven, \newline 
Celestijnenlaan 200B,\newline
3001 Leuven, Belgium
\newline {\tt aron.wennman@kuleuven.be}

\begin{thebibliography}{99}
\bibitem{BeckermannStylianopoulos}{B. Beckermann, N. Stylianopoulos,
Bergman orthogonal polynomials and the Grunsky matrix, Constr. Approx. 47 (2018), 211-235.}
	
\bibitem{Carleman}{T. Carleman, \"{U}ber die approximation 
analytischer funktionen durch lineare aggregate von vorgegebenen
potenzen,  Archiv. f\"{o}r Math. Atron. och Fysik, 17
(1922) 1-30.} 
	
\bibitem{Davis}{P.J. Davis, The Schwarz Function and Its Applications, The Carus Mathematical Monographs 17, 1974.}	
	
\bibitem{DragnevMinaDiaz1}{P. Dragnev, E. Mi\~na-D\'iaz, Asymptotic behavior and zero distribution of 
Carleman orthogonal polynomials, J. Approx. Theory 162 (2010) 1982-2003.}

\bibitem{DragnevMinaDiaz2}{P. Dragnev, E. Mi\~na-D\'iaz, On a series representation for 
Carleman orthogonal polynomials, Proc. Amer. Math. Soc. 138 (2010) 4271-4279.}
	
\bibitem{DragnevMinaNorthington}{P. Dragnev, E. Mi\~na-D\'iaz, M. Northington V, 
Asymptotics of Carleman polynomials for level curves of the inverse of a shifted Zhukovsky transformation, 
Comput. Methods Funct. Theory 13 (2013) 75-89.}
	
\bibitem{ES}{M. Eiermann, H. Stahl, 
Zeros of orthogonal polynomials on regular $N$-gons, 
Lecture Notes in Mathematics, Vol. 1574, Springer, Heidelberg, 1994, pp. 187–189.}
	
\bibitem{LevinSS}{A.L. Levin, E.B. Saff, N. Stylianopoulos, 
Zero distribution of Bergman orthogonal polynomials for certain planar domains, 
Constr. Approx. 19 (2003) 411-35.}

\bibitem{MaymeskulS}{V. Maymeskul, E.B. Saff, 
Zeros of polynomials orthogonal over regular $N$-gons, 
J. Approx. Theory 122 (2003) 129-40.}
	
\bibitem{MinaDiaz2}{E. Mi\~na-D\'iaz, 
An asymptotic integral representation for Carleman orthogonal polynomials, 
Int. Math. Res. Notices (2008) article ID rnn065.}
	
\bibitem{MinaDiaz1}{E. Mi\~na-D\'iaz, 
Orthogonal polynomials in weighted Bergman spaces, J. Approx. Theory, Article 105972, Vol. 296 (2023).}

\bibitem{MinaDiaz3}{E. Mi\~na-D\'iaz, On the asymptotic behavior of Faber polynomials for domains with piecewise analytic boundary, Constr. Approx. 29 (2009) 421–448.}

	
\bibitem{PS}{N. Papamichael, E. B. Saff, J. Gong, 
Asymptotic behavior of zeros of Bieberbach polynomials, 
J. Comput. Appl. Math. 34 (1991) 325-342.}
	
\bibitem{SaffStylianopoulos}{E.B. Saff, N. Stylianopoulos, 
On the zeros of asymptotically extremal polynomial sequences in the plane, 
J. Approx. Theory 191 (2015) 118-127.}
	
\bibitem{Nikos1}{N. Stylianopoulos, 
Strong asymptotics for Bergman polynomials over domains with corners and applications, 
Constr. Approx. 38 (2013) 59-100}.

\bibitem{Suetin}{P.K. Suetin, 
Polynomials Orthogonal over a Region and Bieberbach Polynomials, 
Proceedings of the Steklov Institute of Mathematics, Providence, RI: Amer. Math. Soc., 1974. }
	
\bibitem{Walsh}{J.L. Walsh, 
Interpolation and Approximation by Rational functions in the Complex Domain, 
Amer. Math. Soc. Colloq. Publ. 20, Amer. Math. Soc., Providence, RI, 5th ed., 1969.}
\end{thebibliography}
\end{document}